\pgfplotsset{width=7cm,compat=1.15}  %AFEGIT ARA !!! PER FER ELIPSES 1.13 0 1.15 NO FUNCIONA
\newtheorem{theorem}{Theorem}[section]
\newtheorem{definition}[theorem]{Definition}
\newtheorem{proposition}[theorem]{Proposition}
\newtheorem{lemma}[theorem]{Lemma}
\newtheorem{example}[theorem]{Example}
\newenvironment{proof}
{\begin{trivlist}\item[]{Proof:}}{\hfill{$\square$}\noindent\end{trivlist}}
\begin{document}
% Specify your title and author names here:
\title{Nash's bargaining problem and\\ the scale-invariant Hirsch  citation index}

\author{\normalsize{
Josep Freixas\footnote{Universitat Polit\`{e}cnica de Catalunya (Campus Manresa), Departament de Matem\`{a}tiques;
e-mail: josep.freixas@upc.edu;
postal address: EPSEM, Avda. Bases de Manresa, 61-73, E-08242 Manresa, Spain.}\ ,
Roger Hoerl\footnote{Union College, Department of Mathematics;
e-mails: [hoerlr,zwickerw]@union.edu;
postal address: Union College, 807 Union Street,
Schenectady,  NY 12308, USA.}\ ,
and William S. Zwicker$^{\dagger ,}$\footnote{Murat Sertel Center for Advanced Economic Studies, Istanbul Bilgi University, Turkey.}
}}

\maketitle

\begin{abstract}

A number of citation indices have been proposed for measuring and ranking the research publication records of scholars. Some of the best known indices, such as those proposed by Hirsch and Woeginger, are designed to reward most highly those records that strike some balance between productivity (number of papers published), and impact (frequency with which those papers are cited). A large number of rarely cited publications will not score well, nor will a very small number of heavily cited papers.

We discuss three new citation indices, one of which was independently proposed in \cite{FHLB}. Each rests on the notion of \emph{scale invariance}, fundamental to John Nash's solution of the two-person bargaining problem. Our main focus is on one of these---a scale invariant version of the Hirsch index.  We argue that it has advantages over the original; it produces fairer rankings within subdisciplines, is more decisive (discriminates more finely, yielding fewer ties) and more dynamic (growing over time via more frequent, smaller increments), and exhibits enhanced centrality and tail balancedness.
Simulations suggest that scale invariance improves robustness under Poisson noise, with  increased decisiveness having no cost in terms of the number of ``accidental" reversals, wherein random irregularities cause researcher $A$ to receive a lower index value than $B$, although $A$'s  productivity and impact are both slightly higher than $B$'s.

%Simulations with Poisson noise suggest that scale invariance reduces the number of ``accidental" reversals, wherein random irregularities cause researcher $A$ to receive a lower index value than $B$, although $A$'s  productivity and impact are both slightly higher than $B$'s.

Moreover, we provide an axiomatic characterization of the scale invariant Hirsch index, via axioms that bear a close relationship, in discrete analogue, to those used by Nash in \cite{Nas50}.  This argues for the mathematical naturality of the new index.\footnote{An earlier version was presented at the $5^{th}$ World Congress of the Game Theory Society, Maastricht, Netherlands in 2016.}

%\vskip 0.2truecm
%
%\noindent JEL classification: %D71.

\vskip 0.2truecm

\noindent \emph{Keywords: Scientific citation index; Nash bargaining problem; Hirsch index; Woeginger index; Scale-invariant indices; $\chi$-index.}
\end{abstract}

\section{Introduction} \label{SEC:INTRO}

In the academic system, many crucial decisions concerning faculty recruitment, promotion, Ph.D. positions, awarding of grants, and research travel money depend on how  research is evaluated.  Often, these decisions compare researchers from similar fields and of similar scientific age. Several bibliometric measures based on productivity and impact have been proposed for measuring and ranking their research publication records. These measures are alternatives to other, simpler bibliometric indicators such as sum of all citations, average number of citations, and number of publications. Two of these alternative measures are Hirsch's $h$-index \cite{Hir05, Hir07}, which is the most used bibliometric measure today, and  Woeginger's $w$-index \cite{Woe08a, Woe08b}.

Each of these measures can be calculated from a scientist's  \emph{citation record}, which is the vector ${\bf x} = (x_1,x_2,\dots,x_{l({\bf x})})$ of positive integers in which $x_1$ is the number of citations of the scientist's most cited paper, $x_2$ is the number of citations of the second-most cited paper, etc., so that $x_1 \geq x_2 \geq \dots \geq x_{l({\bf x})}$; here ${\bf x}$'s \emph{length} $l({\bf x})$ is the number of publications that have been cited once at least.\footnote
{
It would do no harm to allow some $0$s (papers without citations) to appear in ${\bf x}$, but these typically do not affect the value of an index. If vectors ${\bf x}$ with $l({\bf x})=0$ were admitted it would mean that either the researcher has no publications yet, or no publications that have generated citations yet. In both cases any reasonable citation record would assign a value of zero to these vectors. In fact, this is an obvious requirement in the definition of a citation index on integers (see, e.g., \cite{Woe08a}). %Our more general definition of citation index below avoids these considerations. %JOSEP  I do not understand the last sentence in this footnote.
}
%The Hirsch $h$-index \cite{Hir05} may be defined as follows: `A scientist has Hirsch index of $h$ if $h$ of his or her papers have at least $h$ citations each and the other papers have less than or equal to $h$ citations each' -- that is, $x_h \geq h$ and $x_{h+1} \leq h$.
In particular, a scientist has a \emph{Hirsch $h$-index} of $h$ if $x_h \geq h$ and $x_{h+1} \leq h$---that is, `if $h$ of his or her papers have at least $h$ citations each and the other papers have less than or equal to $h$ citations each' \cite{Hir05}.
Today, the $h$-index is a widely used indicator of research output, computed automatically in the Web of Science (WoS, provided by Thomson Reuters, Philadelphia, PA, USA) and in other literature databases such as Scopus or Google Scholar. Axiomatizations of the $h$-index have been proposed in \cite{Woe08a, Woe08b} and \cite{Kon14}. A number of studies show that a scientist's $h$-index corresponds to peer judgements (\cite{BoDa05, BoDa07, BoDa09}) and thus has convergent validity.

Note that any citation record ${\bf x}$ can be represented as a bar-graph in the first quadrant (see Figure 1); each bar has width $1$, while the height of the $i^{\emph th}$ bar is $x_i$ (the number of citations of the $i^{\emph th}$-most cited paper).  This bar graph can be considered a region $\mathcal B({\bf x}) \subseteq {\mathbb{R}}^2$ in the first quadrant (the shaded area of Figure \ref{fig:curves}a, or that of Figure \ref{fig:BasicIndices}a including both the lighter and darker regions).  The sum $\sum x_i$ of all citations is now given by the total area of region $\mathcal B({\bf x})$, while the Hirsch index $h$ is given by the \emph{square root} of the area of the square sub-region---that is,  by the side-length of the  \emph{Hirsch square}, which is the square of maximal area, among all squares inscribed in $\mathcal B({\bf x})$ with one corner at the origin.
Why, then, did Hirsch choose to apply a square root to the square's area?  And why omit contributions from the \emph{remainder} of  $\mathcal B({\bf x})$---that part of $\mathcal B({\bf x})$ lying outside the Hirsch square? These questions go to the heart of Hirsch's reasons for proposing his index as an improvement over the simpler bibliometric indicators (such as the sum of all citations)
in use at the time, and we address them next.

The square root guarantees  \emph{linear growth} over time, for the Hirsch index of a single researcher, under the following \emph{simple deterministic model}: each researcher $R$ is endowed with a \emph{productivity parameter} $p$ and an \emph{impact parameter} $c$; if $p$ and $c$ are integers then $R$ publishes exactly $p$ papers each year of their career, with each paper attracting exactly $c$ citations in each year subsequent to, or equal to, the year of publication.\footnote
{\label{floorrounding}
In particular, at the end of year $1$, $p$ papers have been published and each paper has been cited in $c$ publications.  For non-integer parameter values, $\lfloor np \rfloor$ papers are published in the first $n$ years of a career, and each paper is cited $\lfloor kc \rfloor$ times over the first $k$ years subsequent to, or equal to, the year in which it was published.
}
 Let $h_R(n)$ denote $R$'s Hirsch index after $n$ years of $R$'s career.  Then under this model (with integer-valued parameters and for $n$ any positive integer) the points $(n, h_R(n))$ all lie in a strip between two closely spaced parallel lines of common slope $s_{h,R}$ given by

 %Under this model, the points $(n, h_R(n))$ (for positive integers $n$, with integer-valued parameters and with $h_R(n)$ denoting $R$'s Hirsch index after $n$ years of $R$'s career) all lie in a strip between two closely spaced parallel lines of common slope $s_{h,R}$ given by

\begin{equation}  \label{EQ:slope-h}
s_{h,R} = \dfrac{pc}{p+c},
\end{equation}

\noindent as shown in \cite{Hir05}.\footnote{Hirsch's version is phrased somewhat differently.
The same is true when $p$ and $c$ are not integers, but we omit the detailed argument for this case in Section \ref{SEC:LinearGrowth}.
} Hirsch then argues that to compare the research records of two scholars of different ages, without ceding any automatic advantage to the one whose publishing career started earlier, we should compare their slopes.

The Hirsch square divides the remainder of $\mathcal B({\bf x})$ into two disjoint regions: the \emph{vertical tail} lies above the Hirsch square, and the \emph{horizontal tail} lies to its right. %\footnote{
%In Hirsch's original paper \cite{Hir05} the top boundary of $\mathcal B({\bf x})$ is shown as a smooth curve, roughly the shape of the hyperbola $xy=1$, which is symmetric about the diagonal line $y = x$, so that the two tails have the same shape (after reflection) and in particular have the same area.
%}
%If a small proportion of a researcher's publications have been cited many more times than the rest, then these papers can form a large vertical tail, but they are unrepresentative of the impact of the researcher's oeuvre as a whole -- as an extreme case, a single paper (perhaps based on a dissertation, co-authored with a dissertation supervisor who is a noted scholar) might yield a vertical tail containing most of the area in $\mathcal B({\bf x})$.  This is the argument for disregarding the vertical tail.  %The horizontal tail corresponds to a possibly large number of papers with very low impact;
%The argument for disregarding the vertical tail is that when a small proportion of a researcher's publications have been cited many more times than the rest, these papers are unrepresentative of the impact of the researcher's oeuvre as a whole
%-- as an extreme case, a single paper (perhaps based on a dissertation, co-authored with a dissertation supervisor who is a noted scholar) might yield a vertical tail containing most of the area in $\mathcal B({\bf x})$.
%The horizontal tail corresponds to a possibly large number of papers with few citations each; there is an evident argument against rewarding publication strategies that yield large horizontal tails.
Truncating both tails thus rewards most those publication records that achieve a balance between productivity and impact. As we discuss in Section \ref{BalanceCost}, imposing a balance in this way has a cost; it requires sacrificing other properties that may be seen as desirable. 
We do find both the argument for linear growth, and that for balancing productivity and impact, to be compelling, %\footnote{But imposing a balance has a cost, as we discuss in Section \ref{BalanceCost}.}  
 but take issue with the particular method used to impose that balance. By employing a \emph{square}, the Hirsch index equates a unit on the horizontal axis (a publication) with one on the vertical axis (a citation).  Others have also observed that the $h$-index suffers from this implicit reliance on comparability of scale between two axes.  ``The problem is that Hirsch assumes an equality between incommensurable
quantities \dots Hirsch's index \dots posits an equality between two
 quantities with no evident logical connection" \cite{LJL08}.
This equality ``is viewed as an oversimplification and as arbitrary" \cite{Ley09}.
%*** JOSEP: I think what you are saying here is that these other scholars have made the same criticism of the Hirsch index, prior to us.  If so, then I think it would strengthen the argument to provide specific quotations (with quotation marks) from these cited authors.  Also, right now the wording repeats the word ``equality" too many times, but of course if the repeated use comes from the exact quotations, that is not a problem ***
%This reliance is apparent, as well, in the formula (1)
%(\ref{EQ:slope-h})
%or slope of the Hirsch index, in which the denominator of $p+c$ sums two quantities measured in different units, somewhat like adding meters to seconds.
%Note that this implicit equation of units is apparant,
In Section \ref{SEC:Axiomatization}, we will argue that this equating of units has consequences that compromise the value of the Hirsch index as a tool for comparing research records, distorting the ranking of scholars, even when they work in the same  subdiscipline  and are of similar scientific age.  There we point out, as well, that the argument for truncating the vertical and horizontal tails are not the same, so there is no reason to truncate these two tails at the same place,  as is done by the Hirsch square.\footnote{
In Hirsch's original paper \cite{Hir05} the top boundary of $\mathcal B({\bf x})$ is shown as a smooth curve, roughly the shape of the hyperbola $xy=1$, which is symmetric about the diagonal line $y = x$, so that the two tails have the same shape (after reflection) and in particular have the same area. As shown in \cite{FHLB}, this sort of symmetry is not typical for actual citation records. %\highlightb{Remember to add the reference, here and above.}
} Scale-invariant versions are free from these defects.

 %One axis records the number of citations for a publication (representing, loosely, \emph{impact}), and another is used to enumerate publications in descending order of number of citations (representing, loosely, \emph{productivity}). Other features of the $h$-index are that it produces many ties (i.e. many scientists having an identical $h$-index), it is not very dynamic, it grows over time via increments that are less and less frequent, and discriminates profiles with a clear bias in favor of either productivity or impact.

We first proposed scale-invariant versions of the Hirsch and Woeginger indices in a slide presentation at the $5^{th}$ World Congress of the Game Theory Society, in 2016.\footnote{See p10 on the \emph{programmes} link at https://project.dke.maastrichtuniversity.nl/games2016/programme.html. At that time, we had not yet found an axiomatic characterization having satisfactory normative content.} Independently, Fenner, Harris, Levene, and Bar-Ilan (see \cite{FHLB}) proposed their $\chi$-index, which is identical to the scale-invariant Hirsch index $h'$ we discuss here; their follow-up paper  \cite{LFB2019} provides an axiomatization.  In Section \ref{SEC:INTRO}.1 we discuss their contributions, which are largely complementary to our own.
Here we consider several scale-invariant citation indices, including alternative versions of the $h$ and $w$ indices, that are intended to factor out any presumption that a unit on one axis is comparable to a unit on the other. Our approach is to import a key idea from John Nash's solution to the \emph{two-person bargaining problem}. In Nash's context, a \emph{feasible set} is a closed and bounded convex region $F \subseteq {\mathbb{R}}^2$, and a point $(x_1, x_2) \in F$ represents a feasible bargain, in the form of utility payoffs $x_1$ for agent 1 and $x_2$ for agent 2.  A solution $\phi$ to the bargaining problem selects one such point $\phi (F) = (\phi_1 (F) , \phi_2 (F)) \in F$ from each feasible set $F$.  One common principle of economics is that a single agent can make internal comparisons of utility (a vacation in France yields twice as much utility to Sarah as a vacation in Alaska), but interpersonal comparisons of utility are not meaningful; one cannot say that Sarah gains more utility from a vacation in France than does Piotr.  This idea may be expressed mathematically by requiring that any individual's utility scale be defined only up to a linear rescaling $x \mapsto cx$, where $c$ is an arbitrary positive real number.  In Nash's context, this says that  for any two real constants $c_1, c_2 \geq 0$, if
$\psi \!\! : \mathbb{R}^2 \rightarrow \mathbb{R}^2$ is defined by $\psi(x_1 , x_2) = (c_1 x_1, c_2 x_2)$ then the stretched image $\psi [F]$ of a feasible set is not distinguishable from the original $F$, so that a solution $\phi$ should pick corresponding points from these two feasible sets: $\phi(\psi [F]) = \psi(\phi(F))$.  This requirement is Nash's \emph{scale invariance} axiom; it guarantees that the utility Sarah derives from the final bargain with Piotr is unaffected by any change Piotr might make to the size of the unit he uses to measure/report his own utility.

We'll presume that Nash's \emph{disagreement point} has been shifted to the origin, and impose some additional mild restrictions on the feasible region $F$: it lies in the first quadrant, and whenever $0 \leq y_1 \leq x_1$ and $0 \leq y_2 \leq x_2$ with $(x_1, x_2) \in F$, we have $(y_1, y_2) \in F$.\footnote{These conditions do not impose any significant loss of generality on Nash's theorem.} Nash demonstrates that the unique function $\phi$ satisfying his axioms chooses the point $(x_1,x_2)$ maximizing the product $x_1x_2$ of utilities.  This point is unique, thanks to convexity, and is the same as the point on the upper-right-hand corner of the rectangle of greatest area, among all rectangles that fit inside $F$ and have one corner at the origin.  In other words, scale invariance is achieved by using, in place of a square, a \emph{rectangle} of variable proportions.

 Importing Nash's idea, we set the value of the \emph{scale-invariant Hirsch index} $h'$ to be the \emph{square root} of the area of the rectangular sub-region of maximal area, among all rectangles inscribed in $\mathcal B({\bf x})$ with one corner at the origin.\footnote{
Unlike a feasible set for the Nash Bargaining Theorem, the $\mathcal B({\bf x})$ region is not generally convex, so the inscribed rectangle of greatest area need not be unique.  This does not seem to present any difficulties, as the index value depends only on the area of that rectangle, which \emph{is} unique.
} We provide a more precise definition later.  %Loosely speaking, scale invariance in the context means that the \textit{relative} standing of two researchers $R$ and $T$ will be unaffected by applying a common vertical (or horizontal) stretch (or compression) to both of their bar graphs $\mathcal B({\bf x}_R)$, $\mathcal B({\bf x}_T)$.
Loosely speaking, the resulting scale invariance of $h'$ tells us that the \textit{relative} standing of two researchers $R$ and $T$---as represented, for example, by the ratio of their citation index values---would be unaffected by applying a common vertical (or horizontal) stretch (or compression) to both bar graphs $\mathcal B({\bf x}_R)$ and $\mathcal B({\bf x}_T)$.\footnote{Scale invariance is potentially at odds with balance. For example, if the rectangle of maximal area has minimal height and enormous width, reflecting a scholarly record with many papers that were cited by very few, this lack of balance might still be rewarded with a significant $h'$ value. We address this matter in the concluding Section \ref{SEC:MoreQuestions} by recommending that, in practice, rectangles with extreme proportions be disallowed. }

Although we discuss two other scale-invariant citation indices, our main focus in the article is with this version $h'$ of the Hirsch index, for which we provide an axiomatic characterization.
The five axioms resemble, in discrete version, those used by Nash for the two-person bargaining problem.
In fact, four of these five axioms are satisfied by all three of the scale invariant indices we propose.  These four pick out a class of scale-invariant citation indices, while the fifth axiom, \emph{Max-bounded}, picks    $h'$ out from this broader class and is closely related to the axiom of Nash's that has been called \emph{Independence of Irrelevant Alternatives} axiom, \emph{aka} ``IIA."\footnote{\label{Salles}Referring to Nash's axiom, Salles \cite{Salles23} states that, ``It has been rather unfortunate that
this consistency property has been called later (not by Nash himself) independence of irrelevant alternatives, causing much confusion." While Nash's axiom is different from the version of IIA used in Arrow's famous \emph{impossibility theorem}, Arrow himself was not always careful to distinguish between the two.  The history of these axioms (and of their confusion) is more complicated than is often understood; anyone interested should consult Salles's slide talk \cite{Salles18} on the topic. }$^{,}$\footnote{
Nash's axioms fall into two groups: the two \textit{consequential} axioms of scale invariance and Nash IIA distinguish between Nash's solution and other  solutions that have been proposed for the bargaining problem (such as that of Kalai and Smorodinsky \cite{KalSmo75}, or the egalitarian solution of Kalai \cite{Kal77}), while the \textit{innocuous} axioms constitute a \textit{sine qua non} shared by all reasonable solutions.  Thus our axioms for $h'$ include analogues of both of Nash's consequential axioms.}
 This suggests that the parallels between Nash's bargaining solution and the $h'$ index go beyond the shared element of scale invariance.

%Our motivation for introducing scale invariance was that it promised to correct shortcomings that arise in any index that rests on an an implicit assumption of scale comparability.
%in Hirsch's index. It is a common observation that   the Hirsch index -- or any other numerical measure measure of scholarly effectiveness -- is inappropriate when it is applied to compare two scholars from different disciplines.
%We'll argue that such distortions arise, even when such an index is used to rank researchers within a single discipline or subdiscipline, and that scale-invariant versions are free from these effects.

Our starting motivation for introducing scale invariance was that it promised to correct distortions that arise in rankings induced by any index that rests on an an implicit assumption of scale comparability.
But our investigations revealed additional advantages for the scale invariant versions, showing them to be more dynamic, growing over time via increments that are smaller and more frequent, with the enhanced resolution yielding fewer ties. Moreover, simulations that add Poisson noise to Hirsch's simple deterministic model show that scale invariance does not increase the number of ``accidental" reversals, wherein random irregularities cause researcher $A$ to receive a lower index value than $B$, even though $A$'s  productivity and impact are both slightly higher than $B$'s. We take this to be evidence that the increased resolution of the scale invariant versions is not false precision.  Moreover, scale invariance yields a better balance between contributions from publications with the highest visibility and publications having somewhat less impact.  In the case of $h'$, this means that the max-area inscribed rectangle tends to be located more centrally within the $\mathcal B({\bf x})$ region, compared with the Hirsch square, producing a closer balance between the area lost to the vertical tail and that lost to the horizontal tail.

%Different areas have different publishing cultures, according to the Hirsch-index can not be used to compare researchers from different fields'. For instance,  EGIPTOLEG  PEDIATRE, En MATES ALG TOP I COMBI
%
%
%
%\cite{BMD10} provide additional information to the $h$-index which allow quantification of three areas within a scientist's citation distribution: (1) the low impact areas ($h^2$-lower), the area captured by the index ($h^2$-center) and  the area of publications with the highest visibility ($h^2$-upper). CENTRALITY.
%
%\noindent ****************************************************************************** \\
%******************************************************************************
%
The article is organized as follows: After the Section \ref{SEC:INTRO}.1 discussion of related literature, including the $\chi$-index of \cite{FHLB}, Section \ref{SEC:INTRO}.2 considers some of the trade-offs inherent in choosing a citation index that rewards balance between impact and productivity.  Section \ref{SEC:DEFS} then reviews three ``standard" scientific citation indices---those introduced by Hirsch, Woeginger, and Egghe---and proposes a fourth.  We introduce scale-invariant citation indices in Section \ref{SEC:NEW-IND} and provide a list of axioms for such indices. Section \ref{SEC:ScaleInvariantJustification} discusses the idea behind scale invariance, compares standard indices of Hirsch and Woeginger with their scale-invariant versions, and argues the need for indices to be scale-invariant.  The argument is buttressed by parallels with the two-person bargaining problem considered by Nash. Section \ref{SEC:LinearGrowth} considers the simple deterministic case and shows that the Hirsch and Woeginger indices, as well as their scale invariant versions, grow linearly as a function of time, and Section \ref{SEC:Axiomatization} provides two similar axiomatizations of the scale-invariant Hirsch index $h'$, one of which uses Linear Growth as an axiom. Advantages of $h'$ over the original version $h$ of the Hirsch index include improved robustness and resolution when noise is added to the standard deterministic model, evidence for which is presented by the simulations discussed in Section \ref{SEC:Simulations}.
 Section \ref{SEC:Goodness} highlights what we see as the four principal advantages of $h'$ over the original version $h$ of the Hirsch index. Our concluding Section \ref{SEC:MoreQuestions} summarizes the goals of the paper, and briefly describes some promising avenues for further research.

\subsection{Related work} \label{SEC:DEFS}

The $\chi$ index of Fenner et al. \cite{FHLB} is identical to the index $h'$
we discuss here.  Their paper explicitly discusses the ordinal version of
scale invariance (the one we call here SInv) rather than the cardinal version SSInv (see Section
\ref{SEC:NEW-IND}.1).\footnote{Their definition considers only vertical scalings that
multiply the number of citations for each paper by some constant factor,
and not the ``horizontal" ones that clone each paper.  Note, however, that in the
presence of the symmetry axiom of Section \ref{SEC:NEW-IND}.1, either form of scale invariance
implies the other.}  The principal focus is on an analysis of real citation
records: how do the values assigned by Hirsch's index $h$ differ from those
given by its scale-invariant version $\chi = h'$?

Two data sets are analyzed.  The first consists of approximately 90,000 citation records of scholars from diverse fields, each of whom had ``validated their Google Scholar accounts."\footnote{Presumably the validation consisted of adding any missing papers, and merging duplicate listings of the same paper, both of which are potential sources of error in raw Google Scholar records. Worse is that the matter of what constitutes a duplicate is up for interpretation for a paper that may have both conference versions and a journal version, which are not identical but may have substantial overlap.} The second contains the records of 99 Nobel laureates.  The authors find a high correlation between $h$ and $h'$, using either Spearman or Pearson correlation coefficients. In other words, $h$ and $h'$ are generally consistent.  Nonetheless ``there are a substantial number of profiles for which $\chi$ is significantly larger than $h$."

Additionally, authors are grouped as being ``influential," ``prolific," or ``balanced," corresponding respectively to maximal area rectangles that are significantly higher than they are wide ($c > p,$ for the simple deterministic model), wider than they are high ($p > c$), or roughly square ($c \approx p$). This determination was made utilizing a bootstrapping (sampling with replacement) approach to resampling author citation vectors. Most authors are ``influential": $53\%$ of the Google Scholar group, and $80\%$ of the Nobelists.

Fenner et al. interpret their data as evidence that, ``for very
influential
researchers, such as Nobel laureates, when $\chi > h,$ the $h$-index undervalues their contribution." They conclude that ``the $\chi$-index is beneficial and could lead to a more satisfactory ranking of researchers than that obtained using the $h$-index."

%An axiomatic characterization of the $\chi$ index is given in the follow-up article \cite{LFB2019}, by three of the four authors of the first paper.  The axioms employed, however, are different in character from those we use in Sections XXXX and YYYY.  Two of the three axioms are of a more algebraic flavor, without clear normative interpretation: the first states that if the citation record consists of $p$ papers, each of which has exactly $c$ citations (equivalently, the bar graph forms a perfect rectangle), the value of the index is $\sqrt{pc}$, while the second requires the value of the index on an arbitrary citation record $\bf{x}$ to equal its value on some rectangular record dominated by $\bf{x}$.

An axiomatic characterization of the $\chi$ index is given in the follow-up article \cite{LFB2019}, by three of the four authors of the first paper.  The axioms employed, however, are different in character from those we use in Section \ref{SEC:NEW-IND}.1. Scale invariance itself is not among the three axioms used.  Monotonicity is one of them, but the flavor of the other two axioms is more algebraic than normative. These two axioms consider citation records having a specific form, consisting of $p$ papers, each of which has the same number $c$ of citations (equivalently, the bar graph forms a perfect rectangle).  One axiom sets the value of an index on any such record to be $\sqrt{pc}$, while the the other requires the value on an arbitrary citation record $\bf{x}$ to equal its value on some record $\bf{y} \leq \bf{x}$ having this rectangular form. %\highlightb{Remember to add the reference.}

The other work most directly relevant to what we do here is \cite{Woe08a}, in which Woeginger proposes his alternative $w$ to Hirsch's index, based on inscribing an isosceles triangle, rather than a square, in $\mathcal B({\bf x})$. The paper offers axiomatic characterizations of both $h$ and $w$. 

\subsection{The cost of balance}\label{BalanceCost}

Choosing a citation index that rewards balance has a cost, in that balance is inconsistent with other properties that may be seen as desirable (or even, by some authors, seen as \emph{sine qua nons}). We'll mention two of these.  The \emph{independence} axiom requires, for every pair of researchers and every positive integer $k$, that if each researcher adds a single new paper with $k$ citations, then the index value for Researcher $1$ is at least as high as that for Researcher $2$ after the addition if and only if it had been at least as high before.\footnote{One direction of this iff implies that adding the new paper never breaks a tie (between two previously tied researchers); the other direction implies that the addition never creates a tie (between two who had not been tied).  Additionally, either direction alone implies that the addition never reverses which of the two has a strictly higher index value. Thus, independence demands that this particular type of common addition to the record does not change, in any of these senses, the ordinal ranking of the two.}
%the following two conditions be equivalent:
%\begin{itemize}
%\item Initially, Researcher $1$ has at least as high an index value as does Researcher $2$ 
%\item After each of the two adds a single new paper with $k$ citations, Researcher $1$ has at least as high an index value as does Researcher $2$. 
%\end{itemize}
%The \emph{independence} axiom requires, for every positive integer $k$, that when two researchers each add one new paper with $k$ citations, this addition does not change the way the two are ranked by the index 
See \cite{Marchant2009a},  \cite{Marchant2009b}, \cite{WE1} (where the property is called \emph{consistency}), or \cite{BM1}.  
The second property, which we will call \emph{batching consistency} here (it is referred to as \emph{consistency} in \cite{BM2}) considers the use of a citation index to compare two equal sized groups of researchers, such as two different Economics departments, by treating each group as if it were a single individual who authored all papers produced by members of the group.  Loosely speaking, batching consistency then demands of any index $g$ that if we can pair off members of Department $1$ with those of Department $2$, in such a way that each individual member of Department $1$ is rated at least as highly by $g$ as is the Department $2$ member paired to her, then $g$ rates the first group at least as highly as the second. 

The Hirsch index fails to satisfy either of these axioms (\cite{WE2}, \cite{BM1}, \cite{WE3}).  But should this come as a surprise?  Both properties have a strong whiff of linearity, requiring that the whole be seen as the sum of its parts.  Balance is quite another thing, depending on the relationship of those parts (the heights of the bars in the bar graph representation) to one another. For a citation index based on balance, it seems entirely understandable that with a citation record imbalanced towards impact, adding a new bar of height $k$ might increase the index value by enhancing productivity, while adding that same bar to a different citation record---one with high productivity but lower impact---would yield no change in the index. %That the Hirsch index fails to satisfy the independence axiom should be seen as a predictable and natural conflict between properties, not as the sudden revelation of a previously hidden defect.  Such conflicting desiderata are quite common in the mathematical social sciences, where they may be treated as \emph{trade-offs}; we can't have both, so we must choose the one we see as more important. In the study of axiomatic properties of voting rules, for example, one possible approach is to weight the relative importance of conflicting properties according to the context of the election.  

A very similar analysis applies to batching consistency. Suppose the citation records of the individual researchers in Department $1$ are all similarly imbalanced towards impact, with each researcher having produced a small numbers of highly cited papers. Then combining them in the manner of batching consistency enhances productivity (there are more papers), bringing it into better balance with impact. An index based on balance may see quite a boost when applied to the combined record.  In Department 2, the citation records of the individual researchers might instead all be imbalanced toward productivity.  Notice that in this case, combining the records does not enhance impact (papers from the individual records gain no additional citations when they are viewed as part of the combined record), so that for an index based on balance, there would be no corresponding boost for Department 2.  This story easily translates into a failure of batching consistency.  

That the Hirsch index fails to satisfy these two axioms should be seen as a predictable and natural conflict between properties, not as the sudden revelation of a previously hidden defect.  Such conflicting desiderata are quite common in the mathematical social sciences, where they may be treated as \emph{trade-offs}; we can't have both, so we must choose the one we see as more important. In the study of axiomatic properties of voting rules, for example, one possible approach is to weight the relative importance of conflicting properties according to the context of the election.

\section{Scientific citation indices
%and some axioms
} \label{SEC:DEFS}

Recalling the definition of citation record from Section \ref{SEC:INTRO}, we let $X$ denote the set of all possible such records ${\bf x} = (x_1,x_2,\dots,x_{l({\bf x})})$. %In fact, this is an obvious requirement in the definition of a citation index on integers see e.g., \cite{Woe08a}. Our more general definition of citation index below avoids these considerations.
Elements ${\bf x} \in X$ are denoted in boldface; by lightface $x \in X$ we mean that ${\bf x} = (x)$, so that ${\bf x}$ is formed by a single paper with $x>0$ citations. We can easily visualize ${\bf x}$ by means of a \emph{step-function} representation $s_{{\bf x}}$ on the interval $[0,l({\bf x})]$, a \emph{correspondence} $c_{{\bf x}}$ in the same domain, or a \emph{bar-graph subset} $\mathcal B({\bf x})$ of the first quadrant of ${\mathbb{R}}^2$. The respective definitions for a given vector ${\bf x} \in X$ are as follows. Let
$$
    s_{\bf x} (x) = \left\{
                      \begin{array}{ll}
                        x_1, & \hbox{if} \ \ x=0 \\
                        x_i, & \hbox{if} \ \ x \in (i-1,i]  \; \text{and} \; 1 \leq i \leq l({\bf x}) \\
                      \end{array}
                    \right.
$$
Note that $s_{{\bf x}}$ is the minimal nonnegative monotonically decreasing function on $[0,l({\bf x})]$ that agrees with ${\bf x}$ on positive integers. It has discontinuities  on the right at integers $i$ such that $0<i<l({\bf x})$ and $x_i > x_{i+1}$.

The correspondence $c_{{\bf x}}$ can be obtained form $s_{{\bf x}}$ by just adding vertical segments at these discontinuities:
$$
    c_{\bf x} (x) = \left\{
                      \begin{array}{ll}
                        x_1, & \hbox{if} \ \ x=0 \\
                        x_i, & \hbox{if} \ \ x \in (i-1,i)  \; \text{and} \; i \leq l({\bf x}) \\
                        \left[ x_{i+1},x_i \right], & \hbox{if} \ \ x=i, \; \text{and} \; 1 \leq i \leq l({\bf x}) \\
%				0,  & \hbox{if} \ \ x>l
                      \end{array}
                    \right.
$$
with $x_{l+1} = 0$ and $\{ a \} = [a,a]$ identified with $a$.
The compact bar-graph subset $\mathcal B({\bf x}) \subseteq {\mathbb{R}}^2$ of non-negative real components is limited above  by $s_{{\bf x}}$ so that
$(a,b) \in \mathcal B({\bf x})$ \emph{if and only if} $0 \leq a \leq l({\bf x})$ and $0 \leq b \leq s_{{\bf x}}(a)$. %The subset $\mathcal B({\bf x})$ can also be seen as a bar-graph.

Conversely, let $s$ be a decreasing step-function on a given interval $[0,l]$, with $l \in \mathbb{N}$, with image set contained in $\mathbb{N}$ and with possible discontinuities only on the right and at integers in $[1,l]$.
%\highlightb{I do not understand why $l$ is replaced  by $l(x)$, since $s$ does not depend on ${\bf x}$, if it does then it would be $l({\bf x})$ instead of $l(x)$}. 
The restriction to integers of this step-function defines a citation record ${\bf x}_s$ such that $x_i= s(i)$ for all $i=1,2,\dots, l$. From $s$ one can generate $c$ and $\mathcal B$ and from any of them ${\bf x}$ is easily recovered.\footnote{For instance, the $i$-th coordinate of ${\bf x}_{\mathcal B}$ is obtained from $\mathcal B$ as $x_i = \max \{y \! :  (i,y) \in \mathcal B \}$.}

\begin{example}  \label{curves}
Let ${\bf x} = (11,7,6,6,6,4,4,4,3,3,2,2,1,1,1)$ be the research record of a scientist $R$, so that $l({\bf x})=15$. Figure~\ref{fig:curves} shows $s_{\bf x} (x)$,  $c_{\bf x} (x)$ and $\mathcal B({\bf x})$.
\end{example}

\begin{figure}[H]
\centering
%%%%Hirsch
\subfigure[The $s_{\bf x} (x)$ function.]{\begin{tikzpicture}[scale=0.55]
\begin{axis}[
unit vector ratio*= 1 1 1,
xlabel={$i$},ylabel={\rotatebox{270}{$x_i$}},ymin=0,ymax=15,xmin=0,xmax=20]
\coordinate (Point) at (0,11);
\draw [black,fill] (Point) circle (1.5pt);
\coordinate (Point) at (1,11);
\draw [black,fill] (Point) circle (1.5pt);
\coordinate (Point) at (2,7);
\draw [black,fill] (Point) circle (1.5pt);
\coordinate (Point) at (5,6);
\draw [black,fill] (Point) circle (1.5pt);
\coordinate (Point) at (8,4);
\draw [black,fill] (Point) circle (1.5pt);
\coordinate (Point) at (10,3);
\draw [black,fill] (Point) circle (1.5pt);
\coordinate (Point) at (12,2);
\draw [black,fill] (Point) circle (1.5pt);
\coordinate (Point) at (15,1);
\draw [black,fill] (Point) circle (1.5pt);
\addplot
[const plot,thick,draw=black]
coordinates
{(0,11) (1,11)};
\addplot
[const plot,thick,draw=black]
coordinates
{(1,7) (2,7)};
\addplot
[const plot,thick,draw=black]
coordinates
{(2,6) (5,6)};
\addplot
[const plot,thick,draw=black]
coordinates
{(5,4) (8,4)};
\addplot
[const plot,thick,draw=black]
coordinates
{(8,3) (10,3)};
\addplot
[const plot,thick,draw=black]
coordinates
{(10,2) (12,2)};
\addplot
[const plot,thick,draw=black]
coordinates
{(12,1) (15,1)};
\end{axis}
\end{tikzpicture}} \hspace{10mm}
%%%%WOEGINGER
\subfigure[The $c_{\bf x} (x)$ correspondence.]{\begin{tikzpicture}[scale=0.55]
\begin{axis}[
unit vector ratio*= 1 1 1,
xlabel={$i$},ylabel={\rotatebox{270}{$x_i$}}, ymin=0,ymax=15,xmin=0,xmax=20,
area style]
\addplot
[const plot,thick,draw=black]
coordinates
{(0,11) (1,11) (1,7)};
\addplot
[const plot,thick,draw=black]
coordinates
{(1,7) (2,7) (2,6)};
\addplot
[const plot,thick,draw=black]
coordinates
{(2,6) (5,6) (5,4)};
\addplot
[const plot,thick,draw=black]
coordinates
{(5,4) (8,4) (8,3)};
\addplot
[const plot,thick,draw=black]
coordinates
{(8,3) (10,3) (10,2)};
\addplot
[const plot,thick,draw=black]
coordinates
{(10,2) (12,2) (12,1)};
\addplot
[const plot,thick,draw=black]
coordinates
{(12,1) (15,1) (15,0)};
%\addplot
%[const plot,thick,draw=black]
%coordinates
%{(0,11) (1,7) (2,6) (3,6)
%(4,6) (5,4) (6,4) (7,4)
%(8,3) (9,3) (10,2) (11,2) (12,1) (13,1) (14,1) (15,0)}
%\closedcycle;
\end{axis}
\end{tikzpicture}} \hspace{10mm}
%%%%CIRCLE
\subfigure[The $\mathcal B({\bf x})$ bar-graph subset.]{\begin{tikzpicture}[scale=0.55]
\begin{axis}[
unit vector ratio*= 1 1 1,
xlabel={$i$},ylabel={\rotatebox{270}{$x_i$}},ymin=0,ymax=15,xmin=0,xmax=20,
area style]
\addplot
[const plot,fill=cyan,draw=black]
coordinates
{(0,11) (1,7) (2,6) (3,6)
(4,6) (5,4) (6,4) (7,4)
(8,3) (9,3) (10,2) (11,2) (12,1) (13,1) (14,1) (15,0)}
\closedcycle;
%\pgfplotsset{samples=500}
%\addplot+[domain=0:6.33] {sqrt(40-x^2)} \closedcycle;
\end{axis}
\end{tikzpicture}}
\caption{Three ways to visualize the citation record ${\bf x}$.} \label{fig:curves}
\end{figure}
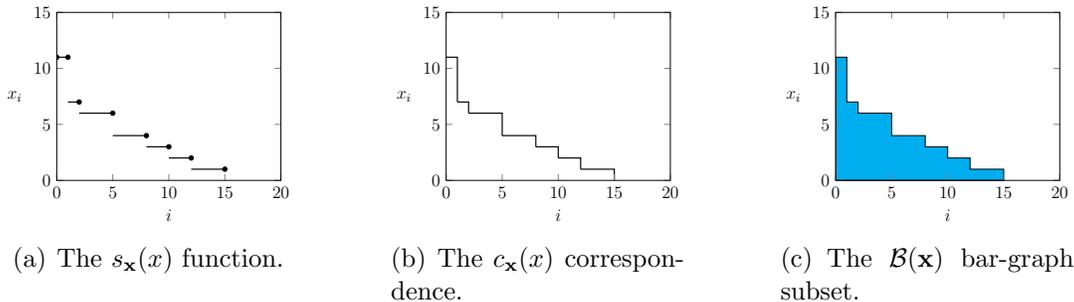

\begin{definition} \label{D:sci}
A \emph{scientific citation index} (or index, for short) is a function $g$ from the set $X$ of citation records into the interval $[1,+\infty)$ of real numbers,  such that $g(1)=1$.
\end{definition}

While some authors require an index to take only integer values, note that Definition \ref{D:sci} is free of this restriction, thus allowing an index to be more dynamic and break ties more frequently. The condition $g(1)=1$ should be interpreted as a normalization factor that avoids indices that only differ by a multiplicative factor. All integer indices  we know satisfy this condition, which can be seen as a \emph{conditio sine qua non} for a citation index. A second \emph{conditio sine qua non} is that of monotonicity, discussed in Section~\ref{SEC:NEW-IND}, but we list it as an axiom, rather than incorporating it in  Definition~\ref{D:sci}.

The rest of this section is devoted to recalling precise definitions for the Hirsch and Woeginger indices.

\begin{definition} \label{D:h}
The $h$-index (Hirsch-index) is the scientific citation index $h\!:X \rightarrow [1,+\infty)
%\mathbb{N}
$ assigning to each vector ${\bf x}$ the value $h({\bf x}) := \max\limits_{1 \leq k \leq l({\bf x})} \{ k \! :  x_k \geq k \} $.
\end{definition}
An equivalent definition is $ h({\bf x})  :=  \max\limits_{1 \leq k \leq l({\bf x})} \min \{ k, x_k \}$.

The Hirsch index coincides with the side length of the ``Hirsch square"---the largest square that lies below the step-function $s_{\bf x}$, is contained in the first quadrant of $\mathbb{R}^2$, and has one corner at the origin (whence the diametrically opposite corner is the intersection point of the curve $c_{\bf x}$ with the line $y=x$). Equivalently, the value of the $h$-index is the square root of the area of the largest square contained in $\mathcal B({\bf x})$, among squares containing the origin.
Note that all points on the curve $c_{\bf x}$ have at least one integer coordinate. Thus, this intersection point with the line $y=x$  is a point on  $c_{\bf x}$ with two equal integer coordinates. This means that if we substituted $\mathbb{N}$ for $[1,+\infty)$ as $h$'s image in Definition~\ref{D:h} (which is how Hirsch~\cite{Hir05} originally defined his index) the index would be unchanged.

\begin{definition} \label{D:w}
The $w$-index (Woeginger-index) is the scientific citation index $w\!:X
\rightarrow [1,+\infty)
%\mathbb{N}
$ assigning to each vector ${\bf x}$ the value $w({\bf x}) := \max\limits_{1 \leq k \leq l({\bf x})} \{ k \! :  x_m \geq k-m+1 \ \text{for all} \ m \leq k \} $.
\end{definition}

The Woeginger index coincides with the leg length of the largest isosceles right triangle that lies below the step-function $s_{\bf x}$, is contained in the first quadrant of $\mathbb{R}^2$, and whose right angle is at the origin (whence one leg lies along the horizontal axis, and another lies along the vertical axis).  Equivalently, the value of the $w$-index is the square root of twice the area of the largest isosceles right triangle contained in $\mathcal B({\bf x})$, among triangles whose right angle is at the origin.
Note that the largest isosceles right triangle contained in $\mathcal B({\bf x})$ has hypotenuse lying along a line with equation $y=-x+w$ for some $0<w \leq l({\bf x})$;
%\highlightb{In many places from here to the end $l$ has been replaced by $l({\bf x})$}.
 this line intersects the curve $c_{\bf x}$ in at least one point with integer coordinates, whence $w \in \mathbb{N}$.  This means that the index would be unchanged if we substituted $\mathbb{N}$ for $[1,+\infty)$ as $w$'s image in Definition~\ref{D:h} (which is how  it was originally defined by Woeginger~\cite{Woe08a}).

Loosely speaking, the $h$-index maximizes the area of a scaled copy of the ${\ell}_{\infty}$ unit ball centered at the origin, having an intersection with the first quadrant that is contained in ${\mathcal B}({\bf x})$, while the
$w$-index does the same for a scaled copy of the ${\ell}_{1}$ unit ball. Other indices can be defined by using ${\ell}_{p}$ balls for other values of $p$. For example, for the case $p=2$ (corresponding to the Euclidean norm), we would be maximizing the area of a scaled copy of a true unit disc:

\begin{definition} \label{D:c}
The $c$-index is the scientific citation index $c:X \rightarrow %\mathbb{N}$
[1,+\infty)
$
that assigns to vector ${\bf x}$ the value $c({\bf x}) := \max \left\{ k \, : \, k \leq \sqrt{(i-1)^2+(x_i)^2}  \ \text{for all} \ i=1, \dots, l({\bf x}), \, l({\bf x})+1 \right\}$, where $x_{l({\bf x})+1}=0$.
\end{definition}
As far as we know this is a new scientific citation index. It can also be defined as
$$
c({\bf x}) := \min\limits_{1 \leq k \leq l({\bf x})+1} \left\{
%\left\lfloor
((k-1)^2+(x_k)^2)^{1/2}
%\right\rfloor
\right\}
$$
where $x_{l({\bf x})+1}=0$.
Note that the $x_{l({\bf x})+1}+1$ points $(i-1,x_i) \in c_{\bf x}$ have integer coordinates. Thus, the $c$-index is the square root of an integer.

\medskip

Some other indices, not based on scaled copies of unit balls, have attracted interest.
One prominent example in this regard is the Egghe index, see \cite{Egg06, AdKo15}.

\begin{definition} \label{D:EggheX}
The $e$-index is the scientific citation index $e:X \rightarrow \mathbb{N}$ that assigns to vector ${\bf x}$ the value
$
e({\bf x}) := \max \left\{ \  k \in \mathbb{N} \, : \,   \sum\limits_{i=1}^{\min\{k, l({\bf x}\}\}} x_i \geq k^2  \ \right\}$.
\end{definition}
%\highlight{JOSEP: In place of $\max\{k,l\}$ do you mean $\max\{k,l(x)\}$?  If so, then I think you actually want $\min\{k,l(x)\}$, as in the alternate version of the definition immediately below.  With the min we get that e(x) is the largest integer k such that the top k papers (or all of the papers, if there are fewer than k) get at least $k^2$ citations, which is what the definition seems to be.  If I am correct that it should be min not max, then your example of $(3,1,1)$ does not show $\overline{e} \neq e$, though $(4,1,1)$ would work.  However, as Example \ref{EXAMPLE} also happens to demonstrate $\overline{e} \neq e$, I used that.
%}
%\highlightb{I agree. Definition \ref{D:EggheX} is correct as it stands now. The wrong old definition 2.5 is already deleted here.}

%\begin{definition} \label{D:Egghe}
%The $e$-index is the scientific citation index $e:X \rightarrow \mathbb{N}$ that assigns to vector ${\bf x}$ the value
%$
%e({\bf x}) := \max \left\{ \  k \in \mathbb{N} \, : \,   \sum\limits_{i=1}^{\min\{k,\, l(x)\}} x_i \geq k^2  \ \right\}.$
%\end{definition}

An alternate but equivalent definition is convenient for our purposes.  For citation records ${\bf x}$ and ${\bf x'}$ we'll say that ${\bf x}$ \emph{cumulatively dominates} ${\bf x'}$  if $l({\bf x}) \leq l({\bf x'})$ and

\begin{equation}\label{CumDomEqn}
\sum\limits_{i=1}^{\min\{j,\, l({\bf x})\}} x_i \; \; \, \geq  \sum\limits_{i=1}^{\min\{j,l({\bf x'})\}} x'_i
\end{equation}

\noindent holds for each $j \in \mathbb{N}$; informally, ${\bf x'}$ arises from ${\bf x}$ by shifting citations from more cited papers to less cited papers (potentially including ``new" papers with indices greater than $l({\bf x})$), smooshing parts of the vertical tail of
 $\mathcal B({\bf x})$ to the right.  The Egghe index $e({\bf x})$ can now equivalently be defined as follows: find the maximum value,  taken over all citation records ${\bf x'}$ cumulatively dominated by ${\bf x}$, of the Hirsch index $h({\bf x'})$.  Loosely speaking, the Egghe index allows citations from the vertical tail to contribute to the dimensions of the Hirsch square, but continues to discount contributions from the horizontal tail.  From either definition, one can see that the $e$-index can assign a value greater than the number of publications by a researcher.

Note that for the alternate definition given here, it does not matter whether we consider $e$ to be a map from $X$ to $\mathbb{N}$ or a map from $X$ to $[1, + \infty )$; as with the Hirsch and Woeginger indices, the outputs will be integers whether or not we explicitly require them to be so.  The story is different for the original definition (which is why the output variable $k$ is explicitly required to belong to $\mathbb{N}$ in Definition \ref{D:EggheX} above).  The most straightforward variant of \ref{D:EggheX} that opens the door to non-integer outputs would seem to be as follows:
$\overline{e}({\bf x}) := \max \left\{ \  k \in [1, + \infty ) \, : \,   \sum\limits_{i=1}^{\min\{ \lfloor k \rfloor,\, l({\bf x})\}} x_i \geq k^2  \ \right\}$.
Example \ref{EXAMPLE} shows that $\overline{e}({\bf x}) > e({\bf x}) $ sometimes holds.

A second virtue of the alternate definition is that it allows one to pose a scale-invariant version of the Egghe index.\footnote{
It is not clear to us how one might introduce scale invariance into the original Definition \ref{D:EggheX}.}
As we see in the next section, however, that version is fundamentally flawed, suggesting that there may not exist any reasonable scale-invariant version of Egghe's index.

\begin{example}  \label{EXAMPLE}  (Example \ref{curves} revisited)
Let ${\bf x} = (11,7,6,6,6,4,4,4,3,3,2,2,1,1,1)$ be the research record of a scientist $R$, so that $l({\bf x})=15$ and for $R$ we have: $h=5$, $w=8$, $c=6$, $e=6$ and $\overline{e}=\sqrt{40}$. Figure~\ref{fig:BasicIndices} shows the respective regions that give rise to the first three of these numbers.
\end{example}

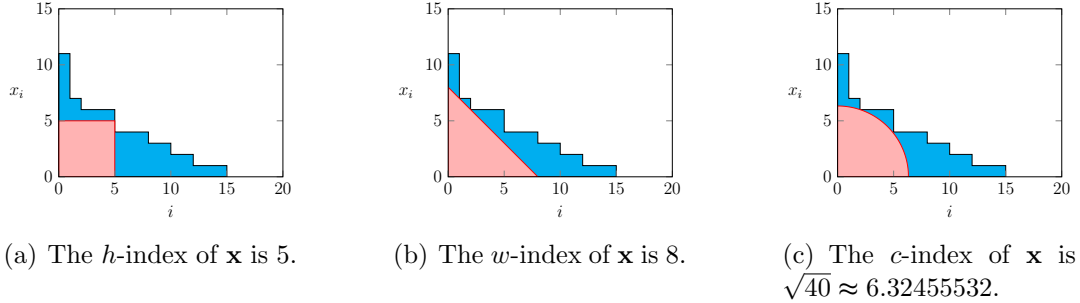
\begin{figure}[H]
\centering
%%%%Hirsch
\subfigure[The $h$-index of ${\bf x}$ is $5$.]{\begin{tikzpicture}[scale=0.55]
\begin{axis}[
unit vector ratio*= 1 1 1,
xlabel={$i$},ylabel={\rotatebox{270}{$x_i$}},ymin=0,ymax=15,xmin=0,xmax=20,
area style]
\addplot
[const plot,fill=cyan,draw=black]
coordinates
{(0,11) (1,7) (2,6) (3,6)
(4,6) (5,4) (6,4) (7,4)
(8,3) (9,3) (10,2) (11,2) (12,1) (13,1) (14,1) (15,0)}
\closedcycle;
\addplot coordinates
{(0,5) (5,5) (5,0)} \closedcycle;
\end{axis}
\end{tikzpicture}} \hspace{10mm}
%%%%WOEGINGER
\subfigure[The $w$-index of ${\bf x}$ is $8$.]{\begin{tikzpicture}[scale=0.55]
\begin{axis}[
unit vector ratio*= 1 1 1,
xlabel={$i$},ylabel={\rotatebox{270}{$x_i$}}, ymin=0,ymax=15,xmin=0,xmax=20,
area style]
\addplot
[const plot,fill=cyan,draw=black]
coordinates
{(0,11) (1,7) (2,6) (3,6)
(4,6) (5,4) (6,4) (7,4)
(8,3) (9,3) (10,2) (11,2) (12,1) (13,1) (14,1) (15,0)}
\closedcycle;
\addplot coordinates
{(0,8) (8,0)} \closedcycle;
\end{axis}
\end{tikzpicture}} \hspace{10mm}
%%%%CIRCLE
\subfigure[The $c$-index of ${\bf x}$ is $\sqrt{40} \thickapprox 6.32455532$.]{\begin{tikzpicture}[scale=0.55]
\begin{axis}[
unit vector ratio*= 1 1 1,
xlabel={$i$},ylabel={\rotatebox{270}{$x_i$}},ymin=0,ymax=15,xmin=0,xmax=20,
area style]
\addplot
[const plot,fill=cyan,draw=black]
coordinates
{(0,11) (1,7) (2,6) (3,6)
(4,6) (5,4) (6,4) (7,4)
(8,3) (9,3) (10,2) (11,2) (12,1) (13,1) (14,1) (15,0)}
\closedcycle;
\pgfplotsset{samples=500}
\addplot+[domain=0:6.33] {sqrt(40-x^2)} \closedcycle;
\end{axis}
\end{tikzpicture}}
\caption{Indices inspired by $\ell_{\infty}$, $\ell_{1}$ and $\ell_{2}$  metrics respectively.} \label{fig:BasicIndices}
\end{figure}

%\highlightb{The $x$- and $y$-axis are now proportional to their lengths. The same applies to Figure 3.}

\section {Scale-invariant scientific impact indices and some axioms}  \label{SEC:NEW-IND}

We start by providing informal, geometric definitions of our proposed scale-invariant versions $h'$, $w'$ and $c'$  of the $h$, $w$ and $c$ indices, with  more precisely phrased definitions to follow.
The value $h'({\bf x})$ of the scale-invariant Hirsch index is given by $\sqrt{ab}$,  where $a \times b$ are the dimensions of any rectangle of greatest area, among all rectangles contained in $\mathcal B({\bf x})$ that have one corner at the origin.  It follows that the diametrically opposite corner is the intersection point of the line through the origin having equation $y=(b/a)x$ with the step-function $s_{\bf x}$. This intersection occurs in a point of  discontinuity on the right for $s_{\bf x}$, so that the rectangle contacts $c_{\bf x}$ at this point as well as at least two other points that also have integer coordinates. Note that $\sqrt{ab}$ is the square root of that maximal area, and that there may exist more than one rectangle of maximal area.

The value $w'({\bf x})$ of the scale-invariant Woeginger index is given by $\sqrt{ab}$,  where $a$ and $b$ are the leg lengths of any right triangle of greatest area, among all right triangles that are contained in $\mathcal B({\bf x})$ with their right angle at the origin.
%corresponds to the square root of the product of the lengths of the two shortest sides, $a$ and $b$, of an angled-right triangle with the right angle in the origin, one leg of length $a$ on the horizontal axis, and another leg of length $b$ on the vertical axis and with the condition that the right-angled triangle is the triangle with largest area contained in $\mathcal B({\bf x})$.
It follows that the hypotenuse touches the curve $c_{\bf x}$ in at least two points with integer coordinates.\footnote{
A short argument shows that a hypotenuse that touches $c_{\bf x}$ in only one point can be rotated through a small angle about that point, in such a way that the triangle gains area. This argument does not apply, of course, to the isosceles triangle of the original $w$-index, which may contact the $c_{\bf x}$ graph at a unique point $(i,x_i)$, corresponding to a particular publication; the index can then increase in value only if a new citation increases $x_i$ in particular.  This distinction may help explain why the scale-invariant versions respond more frequently, and flexibly, to an evolving publication record.
}
 Note that $\sqrt{ab}$ is the square root of double that maximal area, and that there may exist more than one triangle of maximal area.

The value $c'({\bf x})$ of the scale-invariant $c$-index is given by $\sqrt{ab}$,  where $a$ and $b$ are the semi-major and the semi-minor axes of any elipse of greatest area, among all ellipses whose intersection with the first quadrant of $\mathbb{R}^2$ is contained in $\mathcal B({\bf x})$, whose center is at the origin and whose semi-major and the semi-minor axes lie along the coordinate axes. The definition implies that the ellipse touches the curve $c_{\bf x}$ in at least two points with integer coordinates.  More generally:

\begin{definition}
\label{ShapeIndex}
Let $\mathcal{S}\mathcal{Q}_1 = [0,1] \times [0,1]$ denote the unit square, and $\mathcal{L}_1 = {\big (}[0,1] \times \{ 0 \}{\big )} \cup  {\big (}\{ 0 \} \times [0,1]{\big )}$ contain the points along the lower and left boundaries of $\mathcal{S}\mathcal{Q}_1$. Choose any closed convex region $S$ with $\mathcal{L}_1 \subseteq S \subseteq \mathcal{S}\mathcal{Q}_1$ that is symmetric about the line $y=x$.\footnote{Without the convexity requirement, Definition \ref{ShapeIndex} would seem to allow indices that fail to impose the balance between productivity and impact that Hirsch had in mind (because non-convex regions, such as  $S = \mathcal{L}_1$, might fail to lop off the tails of $\mathcal B({\bf x})$).  Note that the convexity requirement implies that the smallest admissible shape $S$ leads to the scale-invariant version $w'$ of Woeginger's index, while the largest such $S$ leads to $h'$.
} Then $S$ serves as the basis of a \emph{scale-invariant symmetric shape citation index} defined by $g'_S({\bf x}) = \sqrt{ab}$; where $a,b$ are positive real numbers chosen to maximize $\sqrt{ab}$ subject to the requirement that stretching $S$ horizontally by factor of $a$ and vertically by a factor of $b$ yields a region $aS^{\leftrightarrow b}$ contained as a subset of $\mathcal B({\bf x})$.
\end{definition}

Any such region $S$ satisfies $\mathcal{T}_1 \subseteq S \subseteq \mathcal{S}\mathcal{Q}_1$, where $\mathcal{T}_1$ denotes the isoceles right triangle obtained as the convex hull of $ \mathcal{L}_1$, and it follows that $h'({\bf x}) \leq g'_S({\bf x}) \leq w'({\bf x})$ holds for all ${\bf x}$; thus $w'$ assigns the largest values among scale-invariant symmetric shape citation indices, and $h'$ assigns the smallest.   We don't argue, here, that any of these other scale-invariant indices offers  specific advantages over $h'$ or $w'$.  They do, however, suggest the broad variety of alternative scale-invariant indices, and it seems possible that with the ``right" choice of shape $S$,  $g'_S$ satisfies some axiom (alternative to the \emph{Max-Bounded} axiom  characterizing $h'$) having independent normative appeal.%\footnote{Without the convexity requirement, Definition \ref{ShapeIndex} would seem to allow indices that fail to impose the balance between productivity and impact that Hirsch had in mind (because non-convex regions, such as  $S = \mathcal{L}_1$, might fail to lop off the tails of $\mathcal B({\bf x})$).  Note that the convexity requirement implies that the smallest admissible shape $S$ leads to the scale-invariant version $w'$ of Woeginger's index, while the largest such $S$ leads to $h'$.

A formal definition of the scale-invariant Hirsch index is straightforward. While the other two are a little more elaborate, they can easily be obtained as the maxima of respective optimization problems.
\begin{definition} \label{D:sih}
The scale-invariant Hirsch index is the scientific citation index $h'\! :X \rightarrow [1,+\infty)$ given by
$$
h'({\bf x}) := \max\limits_{1 \leq k \leq l({\bf x})} \, \, \left\{ (k \cdot x_k)^{\frac 12} \right\}.
$$
\end{definition}

\noindent This index can be seen as a member of the following parameterized family of \emph{scale invariant Hirsch powers}: for each $a>0$, let
\begin{equation}  \label{EQ:HirschParametric}
h_a({\bf x}) :=  \max\limits_{1 \leq k \leq l({\bf x})} \, \, \left\{ (k \cdot x_k)^{a} \right\},
\end{equation}
with $a = 1/2$ leading to $h'$ in particular, so that  $h'$ and $h_{1/2}$  denote the same index.  Our initial set of axioms, in Section \ref{SomeAxioms},  characterize the entire family of scale invariant Hirsch powers. One additional axiom, \emph{Linear Growth},\footnote{
Discussed briefly in the introduction, and stated precisely in Section \ref{SomeAxioms}.
} selects $h'$ in particular.

Comparison to the Nash Bargaining Theorem is facilitated by noting the following equivalent formulation of the $h'$-index:
$$
h'({\bf x}) = \max_{(x,y) \in \mathcal B({\bf x})} (xy)^{1/2},
$$
while $h_a$
% in Equation \eqref{EQ:HirschParametric}, for $a>0$ are also
is given by
$$
h_a({\bf x}) = \max_{(x,y) \in \mathcal B({\bf x})} (xy)^{a}.
$$
Note that for all $a>0$ we have:
\begin{equation} \label{EQ:argmax-sih-a}
\arg \max_{(x,y) \in \mathcal B({\bf x})} (xy)^{a} = \arg \max_{(x,y) \in \mathcal B({\bf x})} xy,
\end{equation}
so that the points in  $\mathcal B({\bf x})$ for which the product $xy$ is maximized do not depend on the power $a$.
Thus:
\begin{itemize}

\item for any $(x,y) \in \arg \max\limits_{(x,y) \in \mathcal B({\bf x})} xy$ and $a>0$, the index $h_a$ is given by $(xy)^a$,
\item $h_a({\bf x}) = [h_1({\bf x})]^a$ for each citation record ${\bf x}$,
\item the ranking of researchers according to index value is independent of $a$; for any two publication records ${\bf x},{\bf y}$ and two real numbers $a, b > 0$, $h_a({\bf x}) > h_a({\bf y}) \Leftrightarrow h_b({\bf x}) > h_b({\bf y})$.

\end{itemize}

\medskip

The other two indices, which play a less prominent role in this article, can be  be obtained by solving the following respective optimization programs.

\begin{definition} \label{D:siw}
The scale-invariant Woeginger index is the scientific citation index \\ $w'\! :X \rightarrow [1,+\infty)$ that assigns to vector ${\bf x}$ the optimal value of the following problem:
$$
\begin{array}{cl}
  \max  &  (c\cdot d)^{\frac 12} \\
     &    \\
  s.t. &  -\frac dc k + d \leq x_{k+1} \ \  \text{for all} \ \ k=0,1,2, \dots, l({\bf x}) \\
   & c \geq 0, \ d \geq 0
\end{array}
$$
where $x_{l({\bf x})+1} = 0$.
\end{definition}
%The $l({\bf x})+3$ constraints of this program are obtained from demanding that the hypotenuse of the right-angled triangle is under $c_{\bf x}$ and touches $c_{\bf x}$ in at least two points with integer coordinates.
%Thus, at least two of the first $l({\bf x})+1$ constraints are verified with equality for the optimal problem.
%\highlight{The above constraints demand that the hypotenuse of the right-angled triangle lie weakly under $c_{\bf x}$; any optimal solution will touch $c_{\bf x}$ in at least two points with integer coordinates.}
%\highlightb{I do not like ``optimal problem" term below. A longer alternative would be: 

The above constraints demand that the hypotenuse (with equation $y = -\frac dc x +d$ and $0 \leq x \leq c$) of the right triangle lie weakly under $c_{\bf x}$; any optimal solution is derived from values $c$ and $d$ such that this hypothenuse %of its associated right-angle triangle 
touches $c_{\bf x}$ in at least two points with integer coordinates.
Thus, at least two of the first $l({\bf x})+1$ constraints are verified with equality for the optimal hypotenuse.
It would also be possible to analogously define the family of  \emph{scale invariant Woeginger powers}, $w_a$, verifying
analogous properties to those in Equation \eqref{EQ:argmax-sih-a} and in the three subsequent bullet points.

An optimization program for finding the $c'$-index is the following.
\begin{definition} \label{D:siellipse}
$$
\begin{array}{cl}
  \max  &  (c \cdot d)^{\frac 12} \\
     &    \\
  s.t. &  \frac dc \sqrt{c^2-k^2} \leq x_{k+1} \ \  \text{for all} \ \ k=0,1,2, \dots, l({\bf x}) \\
   & c \geq 0, \ d \geq 0
\end{array}
$$
\end{definition}
The constraints are deduced from ellipses instead of lines and the optimal ellipse touches  $c_{\bf x}$ in at least two points with integer coordinates.

Let's return to Example~\ref{EXAMPLE} and compute the scale-invariant versions of the three indices considered.  See the respective pictures and optimal solutions in Figure \ref{fig:SIIndices}. %(This has been added to refer at least once to Figure 3.)}
Observe that the optimal ellipse obtained in computing $c'$ passes through $(2,6)$ and $(5,4)$. From the general ellipse equation   $b^2x^2+a^2y^2=a^2b^2$, with semi-axes $a>0$ and $b>0$, we deduce that $a$ is the positive root of $5z^2-209=0$ and $b$ is two times the positive root of $21z^2-209=0$.

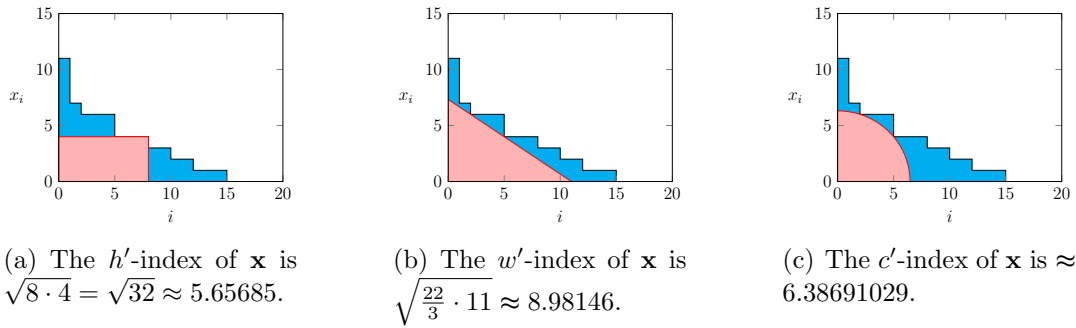
\begin{figure}[H]
\centering
%%%%Hirsch SCALE INVARIANT
\subfigure[The $h'$-index of ${\bf x}$ is $\sqrt{8\cdot 4}= \sqrt{32} \approx 5.65685
$.]{\begin{tikzpicture}[scale=0.55]
\begin{axis}[
unit vector ratio*= 1 1 1,
xlabel={$i$},ylabel={\rotatebox{270}{$x_i$}},ymin=0,ymax=15,xmin=0,xmax=20,
area style]
\addplot
[const plot,fill=cyan,draw=black]
coordinates
{(0,11) (1,7) (2,6) (3,6)
(4,6) (5,4) (6,4) (7,4)
(8,3) (9,3) (10,2) (11,2) (12,1) (13,1) (14,1) (15,0)}
\closedcycle;
\addplot coordinates
{(0,4) (8,4) (8,0)} \closedcycle;
\end{axis}
\end{tikzpicture}} \hspace{10mm}
%%%%WOEGINGER SCALE INVARIANT
\subfigure[The $w'$-index of ${\bf x}$ is $\sqrt{\frac{22}3 \cdot 11} \thickapprox 8.98146.
$]{\begin{tikzpicture}[scale=0.55]
\begin{axis}[
unit vector ratio*= 1 1 1,
xlabel={$i$},ylabel={\rotatebox{270}{$x_i$}},ymin=0,ymax=15,xmin=0,xmax=20,
area style]
\addplot
[const plot,fill=cyan,draw=black]
coordinates
{(0,11) (1,7) (2,6) (3,6)
(4,6) (5,4) (6,4) (7,4)
(8,3) (9,3) (10,2) (11,2) (12,1) (13,1) (14,1) (15,0)}
\closedcycle;
\addplot coordinates
{(0,7.3333) (11,0)} \closedcycle;
\end{axis}
\end{tikzpicture}} \hspace{10mm}
%%%%CIRCLE INVARIANT
\subfigure[The $c'$-index of ${\bf x}$ is $\thickapprox 6.38691029$.]{\begin{tikzpicture}[scale=0.55]
\begin{axis}[
unit vector ratio*= 1 1 1,
xlabel={$i$},ylabel={\rotatebox{270}{$x_i$}},ymin=0,ymax=15,xmin=0,xmax=20,
area style]
\addplot
[const plot,fill=cyan,draw=black]
coordinates
{(0,11) (1,7) (2,6) (3,6)
(4,6) (5,4) (6,4) (7,4)
(8,3) (9,3) (10,2) (11,2) (12,1) (13,1) (14,1) (15,0)}
\closedcycle;
%\addplot+[domain=0:6.465] {0.975900073*sqrt(41.8-x^2)}
\pgfplotsset{samples=500}
\addplot+[domain=0:6.465] {0.975900073*sqrt(41.8-x^2)}
\closedcycle;
\end{axis}
\end{tikzpicture}}
\caption{Three scale-invariant indices for ${\bf x} = (11,7,6,6,6,4,4,4,3,3,2,2,1,1,1)$.} \label{fig:SIIndices}
\end{figure}

%Observe that the optimal ellipse obtained in computing $c'$ passes through $(2,6)$ and $(5,4)$. From the general ellipse equation   $b^2x^2+a^2y^2=a^2b^2$, with semi-axes $a>0$ and $b>0$, we deduce that $a$ is the positive root of $5z^2-209=0$ and $b$ is two times the positive root of $21z^2-209=0$.

To formulate a scale-invariant version $e'$ of the Egghe index $e$, we modify the \emph{alternate} version of the definition.\footnote
{This version appears immediately after the standard one, Definition \ref{D:EggheX}, in the previous section.}  Substituting  $h'$ for $h$, in that version, we obtain: $e'({\bf x})=$ the maximum value,  taken over all citation records ${\bf x'}$ cumulatively dominated by ${\bf x}$, of the scale-invariant Hirsch index $h'({\bf x'})$.  A pathological effect, however, arises from the combination of scale invariance with the ability to shift citations from more highly cited papers to less highly cited ones, and to ``new" papers beyond the number indicated by ${\bf x }$.  The maximum value of  $h'({\b x'})$ 
%\highlightb{I understand what you mean, but it is not clear to me what the notation $h'({\b xf'})$ mean.} 
will always be achieved via ${\bf x'} = (1, 1, 1, \dots , 1)$ with $l({\bf x'})$ equal to the total number $\sum\limits_{i=1}^{l({\bf x})} x_i $ of citations recorded by all papers in the citation record, so that the value $e'({\bf x})$ reduces to the square root of this total. In particular, then, $e'({\bf x})$ incorporates all citations from the horizontal tail (as well as all from the vertical tail), which is not at all what the Egghe index was supposed to do.

Could some other approach yield a credible scale-invariant version of Egghe's index?  Of course, we cannot rule out this possibility. However, it seems unlikely, and we take the pathological behavior of $e'$ as evidence that scale invariance is fundamentally incompatible with Egghe's index.  More broadly, the behavior suggests that the introduction of scale invariance may make sense only for indices closely related to the shape indices (Definition \ref{ShapeIndex}; see related comment in Section \ref{SEC:MoreQuestions}).

\medskip

%Before explaining the meaning of `scale-invariant indices', in Section \ref{SEC:ScaleInvariantJustification},  we introduce some terminology and properties. The properties we introduce in next subsection will be used in Section \ref{SEC:Axiomatization} to provide an axiomatization for the scale-invariant Hirsch citation index.

\subsection{Some axioms}\label{SomeAxioms}

We propose axioms that capture certain key properties of a scientific impact index. %Let $g$ be an arbitrary citation index.

\medskip

\noindent AXIOM 1: Monotonicity (Mon). We say that one citation record ${\bf x}$ is \emph{dominated} by a second record ${\bf y}$, if $l({\bf x}) \leq l({\bf y})$ and $x_k \leq y_k$ for each $k$ with $1 \leq k \leq l({\bf x})$, writing ${\bf x} \preceq {\bf y}$ to denote this situation;  ${\bf x}$ is \emph{strictly dominated} by ${\bf y}$, written ${\bf x} \prec {\bf y}$,  if ${\bf x} \preceq {\bf y}$ and ${\bf x} \neq {\bf y}$.  \emph{Monotonicity} of a scientific citation index $g$ now requires for all ${\bf x, y} \in X$ that
\begin{equation}\label{E:Mon}
\text{if} \quad {\bf x} \preceq {\bf y}, \quad  \text{then} \quad g({\bf x}) \leq g({\bf y}).
\end{equation}
\\
Adding a citation (or a new publication that has been cited) never decreases the value of a monotonic index, implying that the index of a scientist will never decrease over time.
%If the citation record of a researcher $R$ dominates the citation record of a researcher $R'$, then $R$ has at least as impact as $R'$.
Some authors consider monotonicity as a \emph{conditio sine qua non} for citation indices.

\medskip

\noindent AXIOM 2: Symmetry (Sym). Let ${\bf x} = (x_1,x_2, \dots,x_{l({\bf x})})$ be the citation record of an individual researcher $R$ with associated compact set $\mathcal B({\bf x})$. Let ${\mathcal B}^{\ast}({\bf x})$ be the set obtained by reflecting $\mathcal B({\bf x})$ about the line $y=x$; that is,
$(a,b) \in {\mathcal B}^{\ast}({\bf x})$ \emph{if and only if} $(b,a) \in {\mathcal B}({\bf x})$. Let ${\bf x^{\ast}}$ be the citation record obtained from
${\mathcal B}^{\ast}({\bf x})$. We call ${\bf x^{\ast}}$ the \emph{dual} citation record of ${\bf x}$, since ${\bf x^{\ast \ast}} = {\bf x}$.
The symmetry axiom requires of a scientific citation index $g$ that:
    \begin{equation}\label{E:Sym}
    g({\bf x}) = g({\bf x^{\ast}})
    \end{equation}
    for all ${\bf x} \in X$. \\

    \medskip

%Symmetry  property. Let ${\bf x} = (x_1,x_2, \dots,x_{l({\bf x})})$ be the citation record of an individual researcher $R$ and ${\bf x^{\ast}} = (y_1,y_2, \dots,y_{l({\bf y})})$ be the citation record of another individual researcher $R'$ such that the graph of $s_{\bf x^{\ast}}$ is symmetric to the graph of $s_{\bf x}$ with respect to the bisector of the first quadrant.
\noindent Examples of dual records include: \\
    If ${\bf x} = (8,6,2)$ then ${\bf x^{\ast}} = (3,3,2,2,2,2,1,1)$. \\
    If ${\bf x} = (8,6,6,2)$ then ${\bf x^{\ast}} = (4,4,3,3,3,3,1,1)$. \\
    If ${\bf x} = (13,11,11,10,7,4,3,3,3,1)$ then ${{\bf x^{\ast}}} = (10,9,9,6,5,5,5,4,4,4,3,1,1)$. \\

This condition forbids the index from having any built in bias favoring productivity (on the $x$-axis) over impact (on the $y$-axis) or impact over productivity.
While the symmetry axiom requires an equal treatment of productivity and impact by the index, it \emph{does not presuppose} (for any particular input citation record ${\bf x}$, which may itself not be symmetric) that a unit of one variable need be treated as equal to a unit of the other. Both Hirsch and Woeginger indices verify symmetry, while it fails for the Egghe index (which has value $3$ for ${\bf x} = (8,6,2)$ but $2$ for ${\bf x^{\ast}}$).

    %In this example let ${\bf \widetilde{x}}$ be the vector obtained ${\bf x^I}$ by avoiding repetitions, i.e., doing it a vector with strictly decreasing components and for the components with ties we keep that with minimum subindex in ${\bf x^I}$. That is,  ${\bf \widetilde{x}}= (x_1,x_2,x_4,x_5,x_6,x_7,x_{10})=(13,11,10,7,4,3,1)=( \widetilde{x}_1, \widetilde{x}_2, \dots, \widetilde{x}_{l(\widetilde{x})})$. The length $l(\widetilde{x})$ of ${\bf \widetilde{x}}$ is $l(x)$ minus the minimum number of components which must me removed in ${\bf x^I}$ to convert it into a strict vector. Then:
%    $${\bf y^I} = (\underbrace{10}_{{\widetilde{x}}_{l( \widetilde{x})}},\underbrace{9,9}_{{\widetilde{x}}_{l( \widetilde{x})-1}-{\widetilde{x}}_{l( \widetilde{x})}},6,5,5,5,4,4,4,3,\underbrace{1,1}_{\widetilde{x}_1-\widetilde{x}_2}) =
%    (\underbrace{10}_{x_{10}},\underbrace{9,9}_{x_7-x_{10}},\underbrace{6}_{x_6-x_7},
%    \underbrace{5,5,5}_{x_5-x_6},\underbrace{4,4,4}_{x_4-x_5},\underbrace{3}_{x_2-x_4},
%    \underbrace{1,1}_{x_1-x_2})
%    $$
%
%    $ \widetilde{y}$ is defined analogously to  $ \widetilde{x}$. \\
%    The first component in $ \widetilde{x}$ is $13$ which appears once in ${\bf x^I}$, then the last component in $ \widetilde{y}$ is $1$. \\
%    The second component in $ \widetilde{x}$ is $11$ which appears twice in ${\bf x^I}$, then the penultimate component in $ \widetilde{y}$ is $1+2=3$; \\
%    The third component in $ \widetilde{x}$ is $10$ which appears once in ${\bf x^I}$, then the antepenultimate component in $ \widetilde{y}$ is $3+1=4$; etc.\\

\medskip

\noindent AXIOM 3: Scale-Invariance (SInv) and Strong Scale-Invariance (SSInv). We consider two ways of modifying a given citation record ${\bf x} = (x_1, \dots,x_{l(\bf x)})$ via replication.  For any $k \in \mathbb{N}$, $k{\bf x}$ denotes the citation record $(kx_1, \dots,kx_{l(\bf x)})$, with the same number of publications and with the number of citations of each work multiplied by $k$.  For any $m \in \mathbb{N}$, ${\bf x}^{\leftrightarrow m}$ denotes the citation record $(\underbrace{x_1, \dots, x_1}_m, \underbrace{x_2, \dots, x_2}_m, \dots, \underbrace{x_{l(\bf x)}, \dots, x_{l(\bf x)}}_m)$, with length $m l(\bf x)$ in which each publication with its number of citations is replicated $m$ times.

The scale-invariance axiom requires of a scientific citation index $g$ that:
\begin{equation}\label{E:SI}
 g({\bf x}) \leq g({\bf y}) \quad  \text{if and only if} \quad g(k \, {\bf x}^{\leftrightarrow m}) \leq g(k \, {\bf y}^{\leftrightarrow m})
\end{equation}
for each ${\bf x, y} \in X$.  The strong version of the axiom requires, instead, the equality:
\begin{equation}\label{E:SSI}
 g({\bf x}) \cdot g(k \, {\bf y}^{\leftrightarrow m}) = g({\bf y}) \cdot g(k \, {\bf x}^{\leftrightarrow m})
\end{equation}
These scale-invariant axioms are of fundamental importance in our paper, with all of Section \ref{SEC:ScaleInvariantJustification} devoted to an analysis of their significance.

\medskip

Our next axiom, Max-Bounded, is the one that selects the scale invariant Hirsch index $h'$ from a broader class of scale invariant indices.  Immediately after stating the axiom, we'll dissect its relationship to  \emph{Nash's IIA} (see fn \ref{Salles}), the second consequential axiom used by Nash to characterize his solution to the two-person bargaining problem.

\noindent AXIOM 4: Max-Bounded (MaxB). Let ${\bf x}$ and ${\bf y}$ be two  citation records and ${\bf z}$ be defined as their componentwise maximum:
$$
z_i = \left\{
        \begin{array}{ll}
          \max \{x_i,y_i \}, & \hbox{if} \; \; i \leq \min \{l({\bf x}),l({\bf y})\} \\
          x_i, & \hbox{if} \; \; l({\bf y}) < i \leq l({\bf x})  \\
          y_i, & \hbox{if} \; \; l({\bf x}) < i \leq l({\bf y})  \\
        \end{array}
      \right.
$$
%is the minimum citation record that fulfills both ${\bf z} \geq {\bf x}$ and ${\bf z} \geq {\bf y}$,
The max-bounded axiom asserts, for every  choice of ${\bf x}$ and ${\bf y}$, that:
\begin{equation}\label{E:MB}
g({\bf z}) \leq \max \{ g({\bf x})  , g({\bf y})  \}.
\end{equation}

%This condition expresses a condition of consistency: the index of any citation record obtained as the componentwise maximum of two citation records can never be greater than the two indices of the two citation records from which it proceeds, so the index growth has limitations.
\noindent Note that in the presence of \emph{Mon}, condition \eqref{E:MB}  is equivalent to   $g({\bf z}) = \max \{ g({\bf x})  , g({\bf y})  \}$ and also to  $g({\bf z}) \in \{ g({\bf x})  , g({\bf y})  \}$.

Recall that in the context of Nash's Bargaining Theorem, a bargaining solution $\phi$ is a function that selects a unique point $\phi (F)$ out of each feasible subset of $\mathbb{R}_{++} ^2$.
 \emph{IIA} then asserts that whenever $G \subseteq F$
 are two feasible sets, if $\phi (F) \in G$, then $\phi (G) = \phi( F)$.  Uniqueness of the point $\phi (F)$, however, rests on the requirement that any feasible set $F$ be convex (whence any real-valued, continuous and convex function $\rho$ is maximized at a unique point in $F$).

 One important difference from Nash's context, then, is that the subsets
 $\mathcal{B}({\bf x }) \subseteq \mathbb{R}_{++} ^2$ of concern to us here are typically not  convex.\footnote{
 Existence of a unique $\rho$-maximizing point in $F$ rests on additional conditions (other than convexity) on $F$, such as compactness, but these conditions are all satisfied by $\mathcal{B}({\bf x })$.
 }
So it makes sense to consider the following ``multivalued" version \emph{MVIIA} of Nash's axiom:

\begin{definition}
[Multi-Valued IIA, aka MVIIA]
 Let $\phi$ select a nonempty subset $\phi (F)\subseteq F$
out of each set  $F$ belonging
to some specified collection $\mathfrak{F}$  of subsets $F$ of  $\mathbb{R}_{++} ^2$.
 \emph{MVIIA} then asserts that whenever $G \subseteq F$
 are two sets in $\mathfrak{F}$, if $\phi (F) \cap G$ is nonempty, then $\phi (G) = \phi( F)\cap G$; that is, $\phi$ selects from $G$ those points that were selected from $F$ and remain available in $G$, providing at least one such point remains available.
% \footnote{
% The \emph{MVIIA} statement is reminiscent of the reinforcement axiom used by Smith \cite{Smith} and Young \cite{Young} in characterizing (irresolute) scoring rules.  The following variant \emph{MVIIA}$^\star$ is even more parallel:  whenever $F$ and $G$
% are two sets in $\mathfrak{F}$, if $\phi (F) \cap \phi(G)$ is nonempty, then
% $\phi (F \cap G) = \phi( F)\cap \phi(G)$; that is, $\phi$ selects from
% $F\cap G$ those points that were selected in common from both $F$ and
% $G$, providing at least one such point was selected from both.  \emph{MVIIA}$^\star$ follows from \emph{MVIIA} (though the converse is not clear). 
%% ;
%% \highlight{JOSEP -- think about whether you can either prove the converse or identify some additional hypothesis making the converse true}
%%\highlightb{I'm unable to find an example for which the two conditions are different.}
%Given $F$, $G \in \mathfrak{F}$
%with $\phi (F) \cap \phi(G) \neq \emptyset$, let $H=F\cap G$.
%Then $\phi (F) \cap H = \phi( F)\cap (F \cap G) \supseteq \phi(F) \cap \phi(G) \cap (F \cap G)$ $= \phi (F) \cap \phi (G) \neq \emptyset$.  So, by \emph{MVIIA}, $\phi(H) = H \cap \phi(F)$, whence $\phi(F \cap G) = F \cap G \cap \phi(F)$.  Similarly, $\phi(F \cap G) = F \cap G \cap \phi(G)$.  So $\phi(F \cap G) = [F \cap G \cap \phi(F)] \cap [F \cap G \cap \phi(G)] = \phi(F) \cap \phi(G). $
%}

\end{definition}

\noindent The MVIIA property may be of independent interest, as it seems to be related to two other well known principles: the \emph{Weak Axiom of Revealed Preferences} and the \emph{Reinforcement Axiom}, 
which arise in different subfields of the mathematical social sciences. We sketch those connections in the Appendix. 

We'd like to argue, then, that the \emph{MaxB} axiom follows from \emph{MVIIA} with $\mathfrak{F}$ being the set of bar-graph subsets, but any such argument must bridge a second important difference from Nash's context; a solution of the bargaining problem chooses a point $\phi(F) = (x_1, x_2)$ in $\mathbb{R}_{++} ^2$, while a citation index chooses a single real number $g(\mathcal{B}({\bf x})) = g({\bf x})$.  Of course, the bargaining solution characterized by Nash's axioms in fact chooses the point $(x_1, x_2)$ that maximizes a certain function $\rho\! : \mathbb{R}^2 \rightarrow \mathbb{R}$, with the actual function being $\rho (x_1 , x_2 ) = x_1 x_2$.  The scale invariant Hirsch index we characterize is given by the maximal value achieved by the very same function $\rho$ on $\mathcal{B}({\bf x})$.  
It should not be surprising, then, that to derive our \emph{MaxB} axiom from \emph {MVIIA} we must account for this translation in context:

\medskip

\noindent AXIOM 4B: IIA for citation indices (IIACI).
A citation index $g$ is  \emph{point-induced} if $g$ can be written as  $g = \rho \circ \phi$ where $\phi$ selects one or more points $(x_1 , x_2 )$ from each $F$ in the collection $\mathfrak{B}$  of all subsets of the form $\mathcal{B}({\bf x }) \subseteq \mathbb{R}_{++} ^2$, $\rho\! : \mathbb{R}^2 \rightarrow \mathbb{R}$, and $\rho$ has the same value on each point in $\phi (F)$, for each $F \in \mathfrak{B}$.  Then $g$ satisfies IIACI if $g$ is point induced via some $\phi$ satisfying MVIIA.

\medskip

 % Also mentioned the weaker version of IIA for containing sets, which is satisfied by the other scale-invariant citation indices}

The precise result, then, is as follows:

\begin{proposition}\label{IIAandMaxB}
Let $g$ be any scientific citation index satisfying IIA for citation indices.  Then $g$ satisfies Max-Bounded.
\end{proposition}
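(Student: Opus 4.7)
The plan is to reduce the axiom \emph{MaxB} to a purely set-theoretic fact about the bar-graph regions, and then apply \emph{MVIIA} to one of the two natural inclusions. The key geometric observation is that, column by column, the bar at position $i$ in $\mathcal{B}({\bf z})$ has height $z_i=\max\{x_i,y_i\}$, which equals the union (in the vertical direction) of the bars at position $i$ in $\mathcal{B}({\bf x})$ and $\mathcal{B}({\bf y})$, with the appropriate convention when one of the records is shorter. A one-line check therefore shows
\begin{equation*}
\mathcal{B}({\bf z}) \;=\; \mathcal{B}({\bf x}) \cup \mathcal{B}({\bf y}),
\end{equation*}
and in particular $\mathcal{B}({\bf x})$ and $\mathcal{B}({\bf y})$ both sit inside $\mathcal{B}({\bf z})$; all three sets belong to the collection $\mathfrak{B}$ on which the selector $\phi$ is defined.

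Next I would invoke the hypothesis: \emph{IIACI} gives a decomposition $g=\rho\circ\phi$ with $\phi$ satisfying \emph{MVIIA} and with $\rho$ constant on each selected set $\phi(F)$. Pick any point $p\in\phi(\mathcal{B}({\bf z}))$; by the displayed union, $p$ lies in $\mathcal{B}({\bf x})$ or in $\mathcal{B}({\bf y})$. Assume without loss of generality the former. Then $\phi(\mathcal{B}({\bf z}))\cap\mathcal{B}({\bf x})$ is nonempty, so the \emph{MVIIA} property applied with $F=\mathcal{B}({\bf z})$ and $G=\mathcal{B}({\bf x})$ yields
\begin{equation*}
\phi(\mathcal{B}({\bf x})) \;=\; \phi(\mathcal{B}({\bf z})) \cap \mathcal{B}({\bf x}) \;\ni\; p.
\end{equation*}

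To finish, I would use the constancy of $\rho$ on selected sets in both directions. On the one hand, $g({\bf z})=\rho(q)$ for any $q\in\phi(\mathcal{B}({\bf z}))$, so in particular $g({\bf z})=\rho(p)$. On the other hand, $p\in\phi(\mathcal{B}({\bf x}))$, so $g({\bf x})=\rho(p)$. Hence $g({\bf z})=g({\bf x})\le\max\{g({\bf x}),g({\bf y})\}$; in the symmetric case $p\in\mathcal{B}({\bf y})$ one obtains $g({\bf z})=g({\bf y})$ by the same argument, and \eqref{E:MB} follows in both cases.

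I do not anticipate any serious obstacle; the argument is essentially a translation between the language of selectors and the language of citation-index values. The only point requiring care is not to conflate $\rho(p)$ with $g({\bf z})$ for an \emph{arbitrary} $p$ in $\mathcal{B}({\bf z})$: the equality $g({\bf z})=\rho(p)$ is licensed only because $p$ lies in the \emph{selected} subset $\phi(\mathcal{B}({\bf z}))$ on which $\rho$ is constant by the definition of \emph{point-induced}. Once that is pinned down, the geometric union $\mathcal{B}({\bf z})=\mathcal{B}({\bf x})\cup\mathcal{B}({\bf y})$ does all the work, and \emph{MVIIA} transfers the selected point into one of the two component regions.
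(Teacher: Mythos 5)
Your proposal is correct and follows essentially the same route as the paper's own proof: both rest on the identity $\mathcal{B}({\bf z})=\mathcal{B}({\bf x})\cup\mathcal{B}({\bf y})$, apply \emph{MVIIA} to the inclusion of whichever component meets $\phi(\mathcal{B}({\bf z}))$, and conclude $g({\bf z})\in\{g({\bf x}),g({\bf y})\}$ via the constancy of $\rho$ on selected sets. The only cosmetic difference is that you track a single selected point $p$ where the paper argues directly with the sets.
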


\begin{proof}
Let $g = \rho \circ \phi$ be a citation index, point induced by $\rho$, and $\phi$ satisfy \emph{MVIIA} with $\mathfrak{F}$ equal to the set $\mathfrak{B}$ of all possible bar-graph subsets $\mathcal{B}({\bf x})$ generated by citation records ${\bf x}$.  Note first that if $\mathcal{B}({\bf x}), \mathcal{B}({\bf y}) \in \mathfrak{B}$, and ${\bf z}$ is the componentwise maximum of ${\bf x}$ and ${\bf y}$ then $\mathcal{B}({\bf x}) \cup \mathcal{B}({\bf y}) \in \mathfrak{B}$, with $\mathcal{B}({\bf x}) \cup \mathcal{B}({\bf y}) = \mathcal{B}({\bf z})$. It follows that $\phi(\mathcal{B}({\bf z})) = [\phi(\mathcal{B}({\bf z})) \cap \mathcal{B}({\bf x})] \cup [\phi(\mathcal{B}({\bf z})) \cap \mathcal{B}({\bf y})] $, whence at least one of the sets $\phi(\mathcal{B}({\bf z})) \cap \mathcal{B}({\bf x}) , \phi(\mathcal{B}({\bf z})) \cap \mathcal{B}({\bf y}) $ is nonempty.  So \emph{MVIIA} tells us that $\phi(\mathcal{B}({\bf x})) = \phi(\mathcal{B}({\bf z})) \cap \mathcal{B}({\bf x})$ or $\phi(\mathcal{B}({\bf y})) = \phi(\mathcal{B}({\bf z})) \cap \mathcal{B}({\bf y})$.  Thus g({\bf z}) = $\rho (\phi(\mathcal{B}({\bf z})))$ must be equal either to $g({\bf x}) = \rho (\phi(\mathcal{B}({\bf x})))$ or to $g({\bf y}) = \rho (\phi(\mathcal{B}({\bf y})))$, whence $g({\bf z}) \in \{g({\bf x}) , g({\bf y})   \}$.
\end{proof}

\noindent The normative content of IIACI seems clear: the overall measure of effectiveness of a scholarly record ${\bf x}$ is determined by the value of some single ``best" entry $x_j$ in that record.  This best $x_j$ corresponds to the column of $\mathcal{B}({\bf x})$ containing the point $\phi(\mathcal{B}({\bf x}))$.  Proposition \ref{IIAandMaxB} tells us that the MaxB axiom inherits that normative justification.

Our last three axioms govern the way an index begins to respond to increased publications and citations, as well as how it continues to respond.  These three should be thought of as a package.
% and $S_2$ are feasible (hence convex) sets,  then their union $S_1 \cup S_2$ corresponds to the maximum citation record ${\bf z}$ of axiom MB (the height of the Pareto frontier for  $S_1 \cup S_2$ is the maximum of the frontier heights for $S_1$ and $S_2$). Now suppose $S_1 \cup S_2$ is also convex.  If $\phi (S_1 \cup S_2) \in S_1$ then Nash's axiom $IIA$ (\emph{independence of irrelevant alternatives}) implies $\phi (S_1) = \phi ( S_1 \cup S_2) $.  If not, then $\phi (S_1 \cup S_2) \in S_2$ and it similarly follows that $\phi (S_2) = \phi ( S_1 \cup S_2) $.  Thus, when $S_1 \cup S_2$ is convex, \emph{IIA} tells us that $\phi (S_1 \cup S_2) \in \{ \phi (S_1) ,\phi (S_2)\} $.

%Now if $\psi$ is any function assigning numerical values to points in $\Re ^2$, it follows that $\psi \circ \phi (S_1 \cup S_2) \leq$ \emph{max}($\psi \circ \phi (S_1),\psi \circ \phi (S_2)$). Speaking loosely, then, one may argue that axiom MB follows from Nash's IIA combined with the assumption that the value of the index depends on the selection of a single point in the (two-dimensional) histogram.   This latter assumption is a bit more explicit in the following alternative version of MB (which can be used in place of MB in Theorem XXX, with details left to the reader).
%
%\vspace{5mm}
%
%Here add the statement of UW and its informal interpretation.
%
%\vspace{5mm}

%by the the values of the index over certain dominated citation records.

%Dependency on tails...
%
%The index is determined in some rectangle....

 \vspace{4mm}

\noindent AXIOM 5: Weak Responsiveness (WResp).
\begin{equation} \label{E:SR}
    g(2,2) > g(1).
    \end{equation}
     %\\
     %\vspace{4mm}

     \noindent AXIOM 6: Square Root Responsiveness (SqrtResp).
\begin{equation} \label{E:SQR}
    %g(2) = 2^{\frac{1}{2}}.
    g(2)=\sqrt{2}.
    \end{equation}
     %\\
     %\vspace{4mm}

          \noindent AXIOM 7: Linear Growth (LGr).\label{E:LGr} Let $R$ be a researcher whose publication and citation histories follow the \emph{simple deterministic model} (see Section \ref{SEC:INTRO}) with positive integer valued parameters $p$ and $c$; and let $g_R(n)$ be the value of index $g$ for $R$ after $n$ years of publication, with $n \in \mathbb{N}$.     Then the points $(n,g_R(n))$, $n \in \mathbb{N}$, all lie within a strip bounded by two parallel straight lines.

\vspace{4mm}

 Our axiomatic characterization, Theorem \ref{T:char} in Section \ref{SEC:Axiomatization}, uses the first four axioms to characterize the class $\{h_a\}_{a>0}$ of all Hirsch powers.  Adding axioms $5$ (WResp) and $7$ (LGr) together then characterizes the scale-invariant Hirsch index $h' =h'_{\frac{1}{2}} $ in particular; this is part $(ii)$ of Theorem \ref{T:char}, aka the \emph{Main characterization}.  Alternately, in part $(iii)$ of the theorem, aka the ``Strong" characterization, axiom $6$ (SqrtResp) alone substitutes for the combination of $5$ and $7$.

 Why have we chosen to offer two alternate characterizations for $h'$? One virtue of the \emph{Main characterization} is that \emph{all} of the axioms hold for the original Hirsch index, except of course for the scale invariance axiom itself, so that scale invariance alone is what distinguishes the two.\footnote{It would be desirable to have an alternative to the scale invariance axiom---an opposing principle with equally clear normative content---that would characterize the original Hirsch index, when substituted for Strong Scale Invariance in the Main Characterization.}
 But we see  \emph{normative transparency} as the principal advantage of the Main characterization; \emph{all} of the axioms have clear normative content.  We've already discussed the meaning of axiom $4$, as well as that of axiom $7$ along with the role it played in Hirsch's original formulation of his index.
Axiom $5$ also has a clear meaning, asserting that a citation record of 2 publications with 2 citations each is superior to one with a single publication having a single citation; this axiom seems to be satisfied by most
%\footnote{But not all. See remarks in the concluding Section \ref{SEC:MoreQuestions}.} 
of the many indices that have been proposed since Hirsch's paper.  Note also that  the constant function $1$ (all citation records lead to an index of $1$) satisfies all the axioms listed here except for $5$ and $6$.  Of course, when coupled with the other axioms, (WResp)'s effect on responsiveness is greatly magnified; we might say, then, that this axiom ``kick-starts" the responsiveness of citation indices.

Including Linear Growth among the axioms in the Main characterization is arguably an attractive feature, given its role in the formulation of Hirsch's original index.  However, it seems that much of the force of this axiom is not actually needed for the characterization.  We see this from the Strong characterization, which replaces the combination of Linear Growth and Weak Responsiveness with the single axiom of Square Root Responsiveness, asserting $g(2) = 2^{\frac{1}{2}}$. It seems that the Strong Characterization achieves the same results using weaker axioms.

There is a cost, however, in loss of normative transparency; Square Root Responsiveness seems quite technical in flavor, lacking the clear interpretations of the other axioms. To see what  Square Root Responsiveness actually tells us about an index, we can compare it with an alternative called \emph{Scale Responsiveness}, asserting $g(2) > g(1)$.  In strength, this alternative falls between Weak Responsiveness and Square Root Responsiveness. With the help of Monotonicity, it implies Weak Responsiveness.  But it
already encapsulates a kernel of scale invariance, and in particular the Hirsch, Woeginger, and Egghe indices all fail to satisfy it.  Scale responsiveness is equivalent, of course, to the requirement that

\begin{equation} \label{E:Nor}
    g(2) = 2^a
    \end{equation}

\noindent for \emph{some} real number $a > 0$, while Square Root Responsiveness pins the value of $a$ at $\frac{1}{2}$.  Thus, we can think of Square Root Responsiveness as a variant of Weak Responsiveness that is stronger in two respects.

\section{Incomparability of scale}  \label{SEC:ScaleInvariantJustification}

Before the advent of indices such as Hirsch's, it was more common to use, as a metric,
the sum of the number of citations accrued by each publication, which of course is equal
to the total area under the step-function. Hirsch argues in \cite{Hir05} that the area in the two `tails'
of the $P \times I$ graph (see Figure~\ref{fig:BasicIndices}(a)) should not be counted.
A large vertical tail may arise from a small number of papers having a number of citations that is uncharacteristically high
(compared to other publications by the same author), perhaps because they were co-authored by
some particularly distinguished co-author; publications co-authored with one's Ph.D. thesis
advisor, and based on that thesis work, would be an important special case. A large horizontal
tail may represent a substantial number of publications that receive a small number of
citations apiece, presumably indicative of the low impact these publications have had on
the field. The vertical tail represents the part of one's record that has high impact with low
productivity, while the horizontal tail corresponds to high productivity with low impact.

By counting only that part of the area contained in a convex sub-region, Hirsch's index
lops off these two tails (as does Woeginger's, using a somewhat different way of deciding what
constitutes the tail); the resulting metric counts only that part of a publication record that
reflects a suitable  between productivity and impact. The question then becomes,
`What, exactly, should suitable mean, in this context?' The Hirsch square and the Woeginger triangle are each symmetric about the line $y = x$, so when either is used as the convex region, the two tails are truncated at the same point.  Yet the arguments for discarding citations in the vertical tail is quite different from that for the horizontal tail, and Hirsch never argues that
these tails resemble (or should resemble) one another in shape or area.   We see no good argument, then, that the
two tails should always be treated in the same way.
%Notice that the isosceles triangles and squares used by $w$ and $h$ are symmetric about
%the line $y = x$. One consequence is that the tail truncation rule (implicit in both these
%indices) for horizontal tails is the same as that for vertical tails. Its worth noting, however,
%that the argument against counting the area in the horizontal tail is quite different from
%that for the vertical tail, and that Hirsch does not provide empirical evidence suggesting
%these tails resemble one another in shape or area. We see no good argument, then, that the
%two tails should be treated in exactly the same way.

In fact, the work of \cite{FHLB} suggests a systematic tendency for vertical tails to be larger, a trend that is even stronger among Nobel prize winners.  A particularly extreme example is that of Professor John Forbes Nash, the only scholar in the history who won both a Nobel Prize (in Economics) and the Fields medal in Mathematics.   Google Scholar states that Nash's papers have received 21,690 citations in total.\footnote{As of August, 2022.  There is almost surely some miscounting in the Google Scholar record.  For example, some of the less cited articles appear to be about him, rather than by him, but the qualitative picture seems accurate.}    But his two most cited papers (\emph{The bargaining problem}---a principal reference for our work here---and
\emph{Equilibrium points in n-person games}) together account for over 20,000 of those citations, making for a vertical tail that contains 99.36$\%$ of his total citations, a Hirsch square containing only 0.56$\%$, and a horizontal tail with 0.01$\%$.   Nash's case is atypical, of course, but an argument can be made that too many scholars are losing too many  citations to the tails, with most of the loss to the vertical tail.  Perhaps the solution is to accord privileged treatment to the vertical tail, for example the way Egghe's index does, or by replacing the Hirsch square with a vertical rectangle of specified proportions.  Yet Fenner et al. \cite{FHLB} also point out that among Nobelists, some of the citation records with the very highest index values actually have larger horizontal tails.

Moreover, \emph{any}
 choice of a particular shape
of fixed proportions for the convex region (whether or not that region is symmetric about $y = x$) constitutes a commitment to a single notion of
suitable balance, to be imposed on all publication records, in all fields and subdisciplines.
Fixing the proportions of such a shape is tantamount to fixing the ratio between a unit on the vertical (impact) axis and a unit on the horizontal (productivity) axis. As an analogy, imagine that we have designed a numerical metric of performance for sports sedans, which
we use to rank different models, and which relies on measurements taken with particular
units, such as feet (for distance) and minutes (for time). It awards a higher rating to the BMW
320i than to the Infiniti Q50, but when we take the same measurements using different units
(of meters and minutes, say) and combine them in a like manner, the relative scores reverse,
with the Infiniti outscoring the BMW. Would we trust such a metric to provide meaningful
comparisons?

We argue that a similar flaw arises in any citation index that relies on a fixed choice of
shape with fixed proportions.
The Hirsch region, for example, is a square; by requiring the two sides to be equal in length, a square
equates a unit on the horizontal axis (a single publication) with a unit on the vertical
axis (a single citation). But these are completely different sorts of objects; why should one
publication be equated with one citation? This reliance is apparent, as well, in the earlier formula (1) $s_{h,R} = \dfrac{pc}{p+c},$
%(\ref{EQ:slope-h})
for slope of the Hirsch index, in which the denominator of $p+c$ sums two  quantities measured in unrelated units, somewhat like adding meters to seconds.
The assumption that a vertical unit should be treated as the same size as a horizontal
unit is a special case of the more general presumption that there should be some fixed
ratio $r$ relating the unit sizes of the axes; we will refer to this broader version as the scale
comparability assumption, and to any index that rests on such an assumption as a \emph{fixed
scale citation index}.

It is a common observation that the Hirsch index, or any other numerical measure of scholarly effectiveness, should not be used to compare two scholars from different disciplines. We argue that fixed scale indices such as $h$, $w$ or $c$ can introduce distortions, even when used to compare researchers from the \emph{same} discipline or subdiscipline, because disciplines differ in their \emph{publication culture}, and the citation records of individual researchers in a discipline differ as to how well they fit the publication culture of their discipline. Here by \emph{publication culture}  we refer to a variety of factors that differ among disciplines, and among subdisciplines, such as:
\begin{enumerate}
\item Is it more common to publish fewer, longer papers or a greater number of shorter works?
\item Is it more common to cite only a small number papers (perhaps limiting the bibliography to that part of the literature most directly called upon) or a greater number?
\item Does the field have a large number of researchers, or very few?
\item Among researchers who are actively publishing in the discipline, what are the average proportions of the rectangle of greatest area that can be inscribed in $\mathcal{B} ({\bf x})$?
\end{enumerate}
Clearly, the first three items above help drive the last.

For a fixed scale index such as $h$, a change in the units with which $p$ and $c$ are measured can switch which of two scholars has the greater index value, as in the car metaphor mentioned earlier. Such \emph{ranking inversions}, discussed below, are perhaps the most striking of these distortions.
Our original motivation for introducing scale invariance was that it promised to suppress these unintended and undesirable consequences of imposing a common fixed scale over disciplines that differ in publication culture. Indeed,  we'll show that the scale-invariant indices discussed here are free from such types of distortion.

An extreme example may help illustrate these points. Consider disciplines wherein the
publication culture results in some publication records that consist of book-length
monographs, which require more time to write and are presumably fewer in number, but
which may be expected to have correspondingly greater impact in the form of more citations.
Suppose a mid-career scholar has authored four books, each of which has been cited
hundreds of times. A second scholar in the same discipline has written seven books, each
of which has been cited from ten to twenty times. It is easy to see that the first scholar
has Hirsch index of $4$ while the second has Hirsch index of $7$; the $h$-index is blind to the
substantial difference in impact between the works of the first and second scholar, because
the Hirsch square inscribed within $\mathcal B({\bf x})$ bumps into a horizontal limit (due to the number of vertical bars)
well
before any opportunity for the heights of those bars to have any limiting effect.
%the horizontal publication limit in the $P \times I$ graph \emph{need to fix this $P \times I$ notation, as we are no longer using it}.
%(Productivity $P$ refers to the $x$-axis, and impact $I$ refers to the $y$-axis) well
%before any opportunity for the heights of the bars to have any effect.
Equating one book
with one citation is surely unreasonable.

This situation is extreme, and so one might argue
that the Hirsch index\footnote{The Woeginger index and the $c$-index behave similarly in this situation.} was never intended to cope with such wide variance in publication
culture.
Milder variations in publication culture, however, yield effects that, while smaller,
are similarly nocuous. An underlying problem with any fixed scale index can be summarized as follows:

\vspace{3mm}

\noindent \emph{When a fixed scale index is used to compare two scholars in the same subdiscipline, the one whose citation record varies more from the fixed scale assumption of that index is relatively disadvantaged, even if that additional variance places  them closer to the norms for that subdiscipline}.

\vspace{3mm}

Algebraic topology, for example, tends towards publication of fewer, lengthier papers than is typical for some other subfields of Mathematics.  Figure \ref{fig:Stretch} shows the (hypothetical) bar graph representations $\mathcal B({\bf x}_C)$, $\mathcal B({\bf x}_D)$ of two early career algebraic topologists $C$ and $D$.  While $D$ has a citation record that is more typical for her field, her Hirsch index is only $4$; $C$ achieves a higher index value of $6$ because his record  fits the implicit fixed scale assumption of the index better (in that less area is lost to the tails).   

A related consequence of fixed scale is \emph{ranking inversions}, wherein a pair
of scholars in the same discipline are ranked one way, while an arguably analogous pair in
a different discipline are ranked oppositely by the same index. Finite combinatorics leans in the opposite direction from algebraic topology, with more publications that are shorter. In part this divergence arises from fundamental differences between the two sub-disciplines. 

%Imagine that the same two individuals $C$ and $D$ had instead become finite combinatorists -- we'll call them $C^\prime$ and  $D^\prime$. We might expect the bar graphs for $C^\prime$ and  $D^\prime$ to resemble horizontally stretched versions of those for $C$ and $D$. One can easily apply arbitrary stretches to the horizontal axis, but if we require  all stretched bar graphs to arise from actual citation records, then one can only apply integer stretch factors, which is equivalent to repeating each coordinate $x_i$ (of the citation record ${\bf x}$)  $m$ times, for some positive integer $m$. The right hand side of Figure XXX shows 
%$\mathcal B({\bf x}_{C^\prime})$ and $\mathcal B({\bf x}_{D^\prime})$, in which the citation records of $C$ and $D$ have been horizontally stretched by a factor of $3$: ${\bf x}_{C^\prime} = {\bf x}_{C} ^{\leftrightarrow 3}$ and  ${\bf x}_{D^\prime} = {\bf x}_{D} ^{\leftrightarrow 3}$.  The result is an inversion, with the Hirsch index of $D^\prime$ boosted from $4$ to $9$ while the same horizontal stretch increases $C^\prime$'s index by much less -- from $6$ to $8$.  We can block this effect by requiring a scale free index $g$ to respect horizontal scaling: a pair ${\bf x}, {\bf y}$ of scholarly records should satisfy $g({\bf x})<g({\bf y})$ if and only if $g({\bf x}^{\leftrightarrow m}) < g({\bf y}^{\leftrightarrow m})$.

Imagine that the same two individuals $C$ and $D$ had instead become finite combinatorists---we'll call them $C^\prime$ and  $D^\prime$. We might expect the bar graphs for $C^\prime$ and  $D^\prime$ to resemble horizontally stretched versions of those for $C$ and $D$. One can easily apply arbitrary stretches to the horizontal axis, but if we require  all stretched bar graphs to arise from actual citation records, then one can only apply integer stretch factors, which is equivalent to repeating each coordinate $x_i$ (of the citation record ${\bf x}$)  $m$ times, for some positive integer $m$. The right hand side of Figure \ref{fig:Stretch} shows
$\mathcal B({\bf x}_{C^\prime})$ and $\mathcal B({\bf x}_{D^\prime})$, in which the citation records of $C$ and $D$ have been horizontally stretched by a factor of $3$: ${\bf x}_{C^\prime} = {\bf x}_{C} ^{\leftrightarrow 3}$ and  ${\bf x}_{D^\prime} = {\bf x}_{D} ^{\leftrightarrow 3}$.  The result is an inversion, with the Hirsch index of $D^\prime$ boosted from $4$ to $11$ while the same horizontal stretch increases $C^\prime$'s index by much less: from $6$ to $8$.  We can block this effect by requiring a scale free index $g$ to respect horizontal scaling: a pair ${\bf x}, {\bf y}$ of scholarly records should satisfy $g({\bf x})<g({\bf y})$ if and only if $g({\bf x}^{\leftrightarrow m}) < g({\bf y}^{\leftrightarrow m})$.

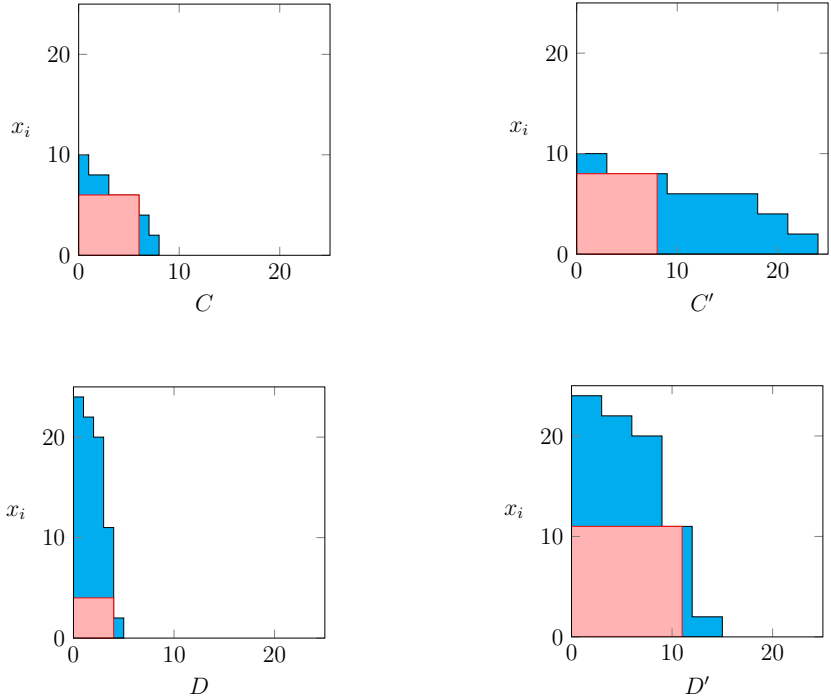
\begin{figure}[H] %\label{CD}
\centering
\subfigure%[Bar graph ${\mathcal B}({\bf x}_C)$.]
{\begin{tikzpicture}[scale=0.75]
\begin{axis}[
unit vector ratio*= 1 1 1,
xlabel={$C$},ylabel={\rotatebox{270}{$x_i$}},ymin=0,ymax=25,xmin=0,xmax=25,
area style]
\addplot
[const plot,fill=cyan,draw=black]
coordinates
{(0,10) (1,8) (2,8) (3,6)
(4,6) (5,6) (6,4) (7,2) (8,2)}
\closedcycle;
\addplot coordinates
{(0,6) (6,6) (6,0)}
\closedcycle;
\end{axis}
\end{tikzpicture}} \hspace{20mm}
%%%%
\subfigure%[Bar graph ${\mathcal B}({\bf x}_{C^\prime})$.]
{\begin{tikzpicture}[scale=0.75]
\begin{axis}[
unit vector ratio*= 1 1 1,
xlabel={$C'$},ylabel={\rotatebox{270}{$x_i$}},ymin=0,ymax=25,xmin=0,xmax=25,
area style]
\addplot
[const plot,fill=cyan,draw=black]
coordinates
{(0,10) (1,10) (2,10)
(3,8) (4,8) (5,8) (6,8) (7,8) (8,8)
(9,6) (10,6) (11,6) (12,6) (13,6) (14,6) (15,6) (16,6) (17,6)
(18,4) (19,4) (20,4)
(21,2) (22,2) (23,2) (24,2)}
\closedcycle;
\addplot coordinates
{(0,8) (8,8) (8,0)}
\closedcycle;
\end{axis}
\end{tikzpicture}}   %\hspace{20mm}
\vskip0.5truecm
%%%%
\subfigure%[Bar graph ${\mathcal B}({\bf x}_D)$.]
{\begin{tikzpicture}[scale=0.75]
\begin{axis}[
unit vector ratio*= 1 1 1,
xlabel={$D$},ylabel={\rotatebox{270}{$x_i$}}, ymin=0,ymax=25,xmin=0,xmax=25,
area style]
\addplot
[const plot,fill=cyan,draw=black]
coordinates
{(0,24) (1,22) (2,20) (3,11) (4,2) (5,2)}
\closedcycle;
\addplot coordinates
{(0,4) (4,4) (4,0)}
\closedcycle;
\end{axis}
\end{tikzpicture}} \hspace{20mm}
%%%%
\subfigure%[Bar graph ${\mathcal B}({\bf x}_{D^\prime})$]
{\begin{tikzpicture}[scale=0.75]
\begin{axis}[
unit vector ratio*= 1 1 1,
xlabel={$D'$},ylabel={\rotatebox{270}{$x_i$}},ymin=0,ymax=25,xmin=0,xmax=25,
area style]
\addplot
[const plot,fill=cyan,draw=black]
coordinates
{(0,24) (1,24) (2,24)
(3,22) (4,22) (5,22)
(6,20) (7,20) (8,20)
(9,11) (10,11) (11,11)
(12,2) (13,2) (14,2) (15,2)}
\closedcycle;
\addplot coordinates
{(0,11) (11,11) (11,0)}
\closedcycle;
\end{axis}
\end{tikzpicture}}
\caption{On the left, bar graphs ${\mathcal B}({\bf x}_{C})$ and ${\mathcal B}({\bf x}_{D})$, with ${\bf x}_C = (10,8,8,6,6,6,4,2)$, and ${\bf x}_D= (24,22,20,11,2)$. On the right, ${\mathcal B}({\bf x}_{C^\prime})$ and ${\mathcal B}({\bf x}_{D^\prime})$ (obtained from the first via a horizontal stretch with stretch factor $3$).} \label{fig:Stretch}
\end{figure}

  In comparing publication cultures across various disciplines, it is important to note, as well, that the number of active scholars in a discipline places a ceiling
on the possible number of citations. Google Scholar lists almost $12,000$ citations for
Nash's paper on the bargaining problem~\cite{Nas50}, and over $23,000$ for one of Einstein's co-authored
papers~\cite{EPR35}. The world's most renowned Egyptologist could never achieve a similarly high
number of citations; the planet lacks sufficiently many Egyptologists.
Suppose, then, that the collective citation records of
Egyptologists roughly resemble vertically compressed versions of the citation records
arising from some other discipline,
with a greater number of publishing scholars; all the heights of the bars in the bar graphs for the other discipline
have been multiplied by $4$, for example, compared to Egyptology.
Then we might want our index $g$ to respect that vertical scaling, with records  ${\bf x}, {\bf y}$,
of a pair of Egyptologists satisfying $g({\bf x}) < g({\bf y})$ \emph{if and only if}
$g(4 {\bf x}) < g(4 {\bf y})$ (where
$4 {\bf x}$ and $4 {\bf y}$ represent the records for a pair of scholars
from the other discipline whose careers are otherwise comparable to those of the two Egyptologists).

The demand that an index respect both vertical and horizontal scaling  is captured by the (SInv) axiom.\footnote{But note that in the presence of the Symmetry axiom, respect for either type of scaling entails respect for the other.}
If we are more restrictive by demanding numerical proportionality under scaling,
 the result is Equation \eqref{E:SSI} which is the (SSInv) axiom.

%One possible approach would be to adopt an individually appropriate scale ratio $r_B$
%for each discipline $B$; the Hirsch square then becomes a Hirsch rectangle (with $\text{height} =
%(r_B)/(\text{width})$) for each discipline $B$ of fixed proportions for each discipline B. Our approach
%is somewhat different, and is modeled after John Nash's solution to the bargaining problem.

\subsection{Taking a cue from the Nash bargaining solution and interpersonal comparisons of utility}

In the \emph{two-person bargaining problem} considered by John F. Nash~\cite{Nas50}, players $1$ and $2$ are
attempting to agree on a point $(x,y)$ chosen from a feasible region $F$, which consists of a
closed convex region of the plane. In Figure \ref{fig:NashFig}, $F$ is the first quadrant region under the curve (boundary included).
The coordinates $x$ and $y$ of each point represent the utility payoffs to $1$ and $2$ respectively.
If the players fail to sign a binding agreement choosing one such point, then they default
to some disagreement point $(x_D, y_D)$ of payoffs in $F$, which we will take to be the origin in
our simplified version, so that a problem instance for us will be a pair $(F,(0,0))$.

%\begin{figure}[!ht]\label{NashFig}
%{\centering
%\includegraphics[height=65mm,width=60mm] {NashScreenShot}
%\caption{The two-person bargaining problem.}}
%\end{figure}

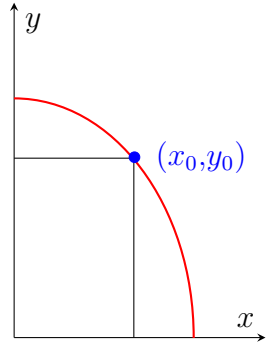
\begin{figure}[!ht]
{\centering
% Preamble: \pgfplotsset{width=7cm,compat=1.13}
\begin{tikzpicture}
\coordinate (Point) at (1.6,2.4);
\begin{axis}[
unit vector ratio*= 1 1 1,
minor tick num=3,
axis y line=center,
axis x line=middle,
ymax=2.8, ymin=0,
xmax=2.1, xmin=0,
xlabel=$x$,ylabel=$y$,
xtick={10},   %%%% POSO 10 PERQUE QUEDA FORA DEL DOMINI I AIXÍ NO SURT RES A L'EIX X
ytick={10},    %%%% POSO 10 PERQUE QUEDA FORA DEL DOMINI I AIXÍ NO SURT RES A L'EIX Y
]
% standard tikz syntax:
\draw[-] (0,1.5) -- (1,1.5);
\draw[-] (1,0) -- (1,1.5);
\draw[red, thick] (0,0)
ellipse [
x radius=1.5, y radius=2];
%\draw[red] (0,0)
%ellipse [rotate=90,
%x radius=1, y radius=2];
% see \pgfplotspointaxisdirectionxy
% for arbitrary ellipses
\draw [blue,fill] (Point) circle (2pt)
node [right] {\ ($x_0$,$y_0$)};
\end{axis}
\end{tikzpicture}
\caption{The two-person bargaining problem.}  \label{fig:NashFig}
}
\end{figure}

Nash's proposed solution yields an agreement point $(x_0, y_0)$ for each such instance, and
is elegantly characterized via four axioms. Our focus is on one of these four, invariance
under linear transformations of utility scales. This axiom is motivated by a well known
assumption in utility theory, the \emph{impossibility of interpersonal comparisons of utility}, which
asserts that there is no meaning to statements such as `Sarah received more utility than
Jordi did.' A consequence is that any choice of the unit size used to measure utility for a
single player is completely arbitrary, with no linkage to the size of a unit used for some other
player. To test whether a proposed bargaining solution honors this principle, one can ask
what happens when a feasible region $F$ is transformed by halving (for example) the size of
the unit used to make all utility measurements for one of the players---player $1$ for example
(corresponding to the horizontal axis of Figure \ref{fig:NashFig}).

The effect of halving units on the $x$-axis is to double all $x$ coordinates of points in $F$,
so that the transformed version $F^{\ast}$ looks as if Figure \ref{fig:NashFig} were stretched horizontally by a
factor of 2. As the original choice of unit size had no intrinsic meaning, our point of view
is that $F$ and $F^{\ast}$ are equivalent representations of the same underlying bargaining problem
and should have the same solution. One way of expressing `same solution' is to assert that
when only the $x$ axis is rescaled, there is no effect on the $y$ coordinate of the solution: the
$y_0$ for $F^{\ast}$ is equal to $y_0$ for $F$. In the presence of Symmetry, it's equivalent to demand that when both axes are rescaled,
the new solution chooses an equivalently rescaled solution point.%; this approach says that
%the solution is well-defined as a map from equivalence classes of bargaining problems to
%equivalence classes of solutions. Under this second approach, the rescalings do not change
%the (equivalence class of the) solution, and we can say that the corresponding axiom asserts
%invariance of the solution under linear scale changes.

Nash's solution chooses the point $(x_0,y_0) \in F$ that maximizes the product
$x_0y_0$:
\begin{equation} \label{EQ:Nash}
Nash(F) = \arg \max_{(x,y) \in F} xy.
\end{equation}
Equivalently (as in Equation \eqref{EQ:argmax-sih-a}), $Nash(F) = \arg \max\limits_{(x,y) \in F} (xy)^a$ for any fixed $a>0$.

Observe (Figure~\ref{fig:NashFig}) 
%\highlight{THIS FIGURE IS NOW MISSING -- NOT SURE WHAT HAPPENED} \highlightb{Bill, you did this picture
%in a separate file called ``NashScreenShot". If you want that it appears here we need to have the file in the same folder than the LaTeX file we are using.
%As you can see I did a ``similar" picture, but maybe you prefer the oldest one. On the other hand, the wrong old numbering of the nonexisting Figure 4.1 is now Figure 4 as it corresponds.}
that this solution coincides with upper right corner of the inscribed
rectangle having maximal area. It is easy to see from $F$'s convexity that this maximizer is
unique. Why does it satisfy invariance under linear scaling? Any rescaling of the two axes
by factors of $k$ and $m$ transforms each rectangle of area $A$ inscribed in $F$ into a rectangle of
area $km$A inscribed in $F^{\ast}$, so that the rectangle $R^{\ast}$ of greatest area in $F^{\ast}$
is the transformed version of the greatest area rectangle $R$ in $F$. Thus the solution $(x_0,y_0)$ for $F$ is transformed into the solution $({x_0}^{\ast},{y_0}^{\ast})$ for $F^{\ast}$ by the same pair $k, m$ of scale factors that turned $F$ into $F^\star$.

%Thus the solution $(x_0,y_0)$ and
%$({x_0}^{\ast},{y_0}^{\ast})$ of $F$ and of $F^{\ast}$ are equivalent to each other under same pair rescalings;
%the equivalence class of the solution has not changed.

\subsection{The scale-invariant property of some indices}\label{WhatInvIs}

When we attempt to transfer Nash's idea to the context of citation indices, how exactly
should the ideal of scale invariance be expressed? What new indices satisfy this ideal? The
ideal itself is most directly and easily expressed in terms of the following theorem:

\begin{theorem} \label{t:SII} Let ${\bf x}$ and ${\bf y}$ be any two citation records, $k$ and $m$ be positive integers, and $g$
be any scale-invariant symmetric shape citation index.  Then,
$$g({\bf x}) \leq g({\bf y}) \quad  \text{if and only if} \quad g(k \, {\bf x}^{\leftrightarrow m}) \leq g(k \, {\bf y}^{\leftrightarrow m})$$
\end{theorem}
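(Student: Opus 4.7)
My plan is to show that the transformation $\mathbf{x}\mapsto k\,\mathbf{x}^{\leftrightarrow m}$ acts on the bar-graph $\mathcal B(\mathbf{x})$ exactly as the linear stretch $T_{m,k}\colon (u,v)\mapsto(mu,kv)$, and then use this fact to show that any scale-invariant symmetric shape citation index $g=g'_S$ satisfies the multiplicative identity $g(k\,\mathbf{x}^{\leftrightarrow m}) = \sqrt{km}\cdot g(\mathbf{x})$. Once this is in hand, the biconditional in the statement of the theorem is an immediate algebraic consequence.

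First I would verify the geometric claim: because $\mathbf{x}^{\leftrightarrow m}$ replicates each entry $m$ times, its bar graph is obtained from $\mathcal B(\mathbf{x})$ by horizontally repeating each unit-width column $m$ times, which is precisely the image of $\mathcal B(\mathbf{x})$ under $(u,v)\mapsto(mu,v)$; multiplying every entry by the integer $k$ then vertically stretches every column by a factor of $k$, giving $(u,v)\mapsto(u,kv)$. Composing, $\mathcal B(k\,\mathbf{x}^{\leftrightarrow m}) = T_{m,k}[\mathcal B(\mathbf{x})]$. Integrality of $k$ and $m$ is essential here: it is what guarantees that the stretched region is again a bar graph of a valid citation record. (Compare footnote \ref{floorrounding}.)

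Next, I would exploit Definition \ref{ShapeIndex}. A set of the form $aS^{\leftrightarrow b}$ (an $a$-by-$b$ stretched copy of the reference shape $S$) is mapped by $T_{m,k}$ to $(ma)S^{\leftrightarrow(kb)}$, and this correspondence is a bijection between families of such inscribed regions. Therefore
\[
aS^{\leftrightarrow b}\subseteq \mathcal B(\mathbf{x}) \iff (ma)S^{\leftrightarrow(kb)}\subseteq \mathcal B(k\,\mathbf{x}^{\leftrightarrow m}),
\]
and maximizing $\sqrt{ab}$ on the left is equivalent to maximizing $\sqrt{(ma)(kb)} = \sqrt{km}\,\sqrt{ab}$ on the right. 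Hence
\[
g(k\,\mathbf{x}^{\leftrightarrow m}) = \sqrt{km}\cdot g(\mathbf{x}),
\]
and the analogous identity holds for $\mathbf{y}$. The conclusion $g(\mathbf{x})\le g(\mathbf{y})\Leftrightarrow g(k\,\mathbf{x}^{\leftrightarrow m})\le g(k\,\mathbf{y}^{\leftrightarrow m})$ follows by dividing (or multiplying) both sides by the positive constant $\sqrt{km}$. As a bonus, the same identity yields the stronger multiplicative form in equation \eqref{E:SSI}, so the proof simultaneously establishes \textbf{SSInv} for every $g'_S$.

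The main obstacle, and the only place any genuine care is needed, is the first step: making sure that the combinatorial replication operations ${\bf x}\mapsto {\bf x}^{\leftrightarrow m}$ and ${\bf x}\mapsto k{\bf x}$ really do coincide with horizontal and vertical dilations of $\mathcal B({\bf x})$ by integer factors. Once that identification is pinned down, the rest is a direct application of the definition of $g'_S$ as a maximum of $\sqrt{ab}$ subject to $aS^{\leftrightarrow b}\subseteq \mathcal B(\mathbf{x})$, and the argument is independent of the specific reference shape $S$.
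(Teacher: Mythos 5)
Your proof is correct and follows essentially the same route as the paper, which reduces Theorem \ref{t:SII} to the multiplicative identity of Lemma \ref{p:SII} ($g(k{\bf x}) = \sqrt{k}\,g({\bf x})$ and $g({\bf x}^{\leftrightarrow m}) = \sqrt{m}\,g({\bf x})$), stated there as following ``immediately from the definition.'' The only difference is that you spell out the geometric justification---the identification of $k\,{\bf x}^{\leftrightarrow m}$ with the linear stretch $T_{m,k}$ of $\mathcal B({\bf x})$ and the induced bijection on inscribed copies $aS^{\leftrightarrow b}$---which the paper leaves implicit.
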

Theorem~\ref{t:SII} follows from the following Lemma, whose proof follows immediately from the definition of scale-invariant symmetric shape citation index.

\begin{lemma} \label{p:SII} Let ${\bf x}$ be any citation record, $k$ and $m$ be positive integers, and $g$
be any scale-invariant symmetric shape citation index. Then,
$g(k{\bf x}) = \sqrt{k} g({\bf x})$ and $g({\bf x}^{\leftrightarrow m}) = \sqrt{m} g({\bf x})$.
\end{lemma}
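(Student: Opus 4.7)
The plan is to reduce both claims to a simple observation about how the bar-graph region $\mathcal{B}$ transforms under the two replication operations, together with the fact that the optimization defining $g = g'_S$ scales linearly in the stretch factors applied to $\mathcal{B}$.

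First I would record the two geometric facts underlying everything: (i) the region $\mathcal{B}(k{\bf x})$ is obtained from $\mathcal{B}({\bf x})$ by the vertical stretch $(u,v)\mapsto (u, kv)$, since multiplying every $x_i$ by $k$ simply scales heights; and (ii) the region $\mathcal{B}({\bf x}^{\leftrightarrow m})$ is obtained from $\mathcal{B}({\bf x})$ by the horizontal stretch $(u,v)\mapsto (mu, v)$, since each width-$1$ bar of height $x_i$ becomes $m$ consecutive width-$1$ bars of the same height, which together form a single width-$m$ rectangle of height $x_i$.

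Next I would use the fact that, by Definition \ref{ShapeIndex}, $aS^{\leftrightarrow b}$ denotes the image of the fixed shape $S$ under the map $(u,v)\mapsto(au,bv)$. Consequently the inclusion $aS^{\leftrightarrow b}\subseteq \mathcal{B}({\bf x})$ holds if and only if $aS^{\leftrightarrow (kb)}\subseteq \mathcal{B}(k{\bf x})$, simply by composing with the vertical stretch by $k$; similarly, $aS^{\leftrightarrow b}\subseteq\mathcal{B}({\bf x})$ holds if and only if $(ma)S^{\leftrightarrow b}\subseteq\mathcal{B}({\bf x}^{\leftrightarrow m})$ by composing with the horizontal stretch by $m$. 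Thus the feasible sets for the two optimization problems are in bijection via $(a,b)\leftrightarrow(a,kb)$ in the first case and $(a,b)\leftrightarrow(ma,b)$ in the second.

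Finally I would note that the objective $\sqrt{ab}$ rescales by $\sqrt{k}$ under $(a,b)\mapsto(a,kb)$ and by $\sqrt{m}$ under $(a,b)\mapsto(ma,b)$. Taking the maximum on each side of the bijection then yields
\[
g(k{\bf x}) \;=\; \max \sqrt{a\cdot(kb)} \;=\; \sqrt{k}\,\max\sqrt{ab} \;=\; \sqrt{k}\,g({\bf x}),
\]
and analogously $g({\bf x}^{\leftrightarrow m}) = \sqrt{m}\,g({\bf x})$. The only point requiring any care is verifying that the maximum is attained (so that the bijection of feasible sets really does transfer the optimal value), but this is immediate since $\mathcal{B}({\bf x})$ is compact and $S$ is a fixed compact shape, so the set of feasible $(a,b)$ is compact and the continuous objective $\sqrt{ab}$ attains its maximum. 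No obstacle arises beyond making sure the notation $aS^{\leftrightarrow b}$ is being used consistently with the stretch conventions for $k{\bf x}$ and ${\bf x}^{\leftrightarrow m}$.
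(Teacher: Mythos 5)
Your proof is correct and follows exactly the route the paper intends: the paper states that Lemma \ref{p:SII} ``follows immediately from the definition of scale-invariant symmetric shape citation index,'' and your argument simply writes out those details (the bar-graph regions transform by a vertical stretch by $k$ and a horizontal stretch by $m$, the feasible $(a,b)$ sets are carried bijectively onto one another, and the objective $\sqrt{ab}$ rescales by $\sqrt{k}$ and $\sqrt{m}$ respectively). Your added remark on attainment of the maximum is a harmless extra precaution, not a divergence from the paper's approach.
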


%\begin{proof} Any rescaling of the two axes by either factors of $k$
%or $m$ transforms each region (rectangle for $\ell_{\infty}$, triangle for $\ell_1$, ellipse for $\ell_2$, etc.) of area $A$ inscribed in
%$\mathcal B({\bf x})$ into a regions of the same form of area $kA$ inscribed in $\mathcal B(k{\bf x})$ or $mA$ inscribed
%in $\mathcal B({\bf x}^{\leftrightarrow m})$, so that the region of greatest area $A$ in $\mathcal B({\bf x})$ is the transformed
%version in either $\mathcal B(k{\bf x})$ or $\mathcal B({\bf x}^{\leftrightarrow m})$.
%\end{proof}

Theorem~\ref{t:SII} thus suggests that the quantity invariant under scale transforms is the \emph{ranking}
induced by an index, and this is exactly what the scale invariance axiom (SInv) requires.  So why require the stronger version (SSInv) for our characterization?  If we modify $h'$ by applying a suitable monotonic transform (an arbitrary order-preserving function $f \!\! :[0, +\infty) \rightarrow [0, +\infty)$ for which $f(1)=1$ and $ f(\sqrt{2})= \sqrt{2}$), then the resulting index $f \circ h'$ satisfies most of the axioms, including scale invariance (SInv) and square root responsiveness (SqrtResp) but excluding strong scale invariance (unless $f(x)=x$ holds for each $x$ equal to the square root of some integer, in which case we would have $f \circ h'=h'$). In particular, this shows that for the alternate version (part $(iii)$) of our characterization Theorem \ref{T:char}, we cannot replace (SSInv) with the weaker version (SInv).  If $|f(x) - x|$ is bounded (as it is for $f(x) = x + sin(x)$, for instance) then $f \circ h'$ also satisfies linear growth (LGr), showing that the Main version also cannot go through with the weaker version (SInv) alone.\footnote{
Of course, $f \circ h'$ and $h'$ would rank citation records in exactly the same way, so the distinction between (SInv) and (SSInv) is arguably of limited, technical interest only.
} 

\section{Linear Growth}  \label{SEC:LinearGrowth}

In defining his index, Hirsch used the side-length of the largest inscribed square---equivalently, the \emph{square root} of the square's area, rather than the area itself. An individual $R$'s citation record ${\bf x}$ changes over time, with ${\bf x} = {\bf x}_R(t)$, and Hirsch's reasoning was based on the growth of the function $h({\bf x}_R(t))$, which gives the index value as a function of the number $t$ of years since researcher $R$'s professional career began. Under the simple deterministic model, each researcher is endowed with two parameters; these determine the number $p$ of new papers published each year and the number $c$ of citations made each year, to each paper published that year or earlier. Under this model, using the square root of the area guarantees that the graph of $h({\bf x}_R(t))$ is \emph{almost a straight line through the origin} (we'll be more precise in a moment). One can then compare the research record of early- and late-career researchers via the slopes of their lines, factoring out the advantage otherwise provided by a longer career. No one argues that this two-parameter model is at all realistic, but one is nonetheless left with the sense that comparing two researchers via index growth slopes is more sensible if the \emph{square root is applied}, lest there be a built-in quadratic advantage for the more senior scholar.

This is why the indices we propose apply a square root to the area of an inscribed shape. Consequently each of them, along with the original versions of the Hirsch and Woeginger indices, satisfies the Linear Growth axiom (Section \ref{SomeAxioms}), which makes the notion of ``almost a straight line" precise by requiring the graph of $g({\bf x}(t))$ to lie within a strip bounded by two parallel straight lines.  As we'll see, the lower line of these two passes through the origin, and the strip is quite narrow, relative to the dimensions of the entire graph (over a $20-40$ year publishing career).

\begin{proposition}\label{LGrProp}
Under the simple deterministic model with integer parameters $p$ and $c$, each of the indices $h$, $h'$, $w$, and $w'$ satisfy the Linear Growth axioms: if $g$ is any of these four indices, the points on the graph of $g({\bf x}(t))_{t \in \mathbb{N}}$ all lie in the strip between two lines with a common slope $s_g$, with the lower line passing through the origin and the upper line spaced a distance  $d_g$ above it.  The values of  $s_g$ and  $d_g$ for these indices are as follows:
\begin{itemize}
\item $s_h = \dfrac{pc}{p+c} = d_h$
\item $s_{h'} = \dfrac{\sqrt{pc}}{2} = d_{h'}$
\item $s_w = \min \{p,c\};\hspace{2mm} d_w = 0$
\item $s_{w'} = \sqrt{pc};\hspace{2mm}  d_{w'} = 0$
\end{itemize}
%
%
%\begin{equation}  \label{EQ:slope-h}
%s_h = \dfrac{pc}{p+c} = d_h
%\end{equation}
%
%
%\begin{equation}  \label{EQ:slope-h'}
%s_{h'} = \dfrac{\sqrt{pc}}{2} = d_{h'}
%\end{equation}
%
%\begin{equation}  \label{EQ:slope-w}
%s_w = \min \{p,c\}; d_w = 0
%\end{equation}
%
%
%\begin{equation}  \label{EQ:slope-w'}
%s_{w'} = \sqrt{pc}; d_{w'} = 0
%\end{equation}
%
%
\end{proposition}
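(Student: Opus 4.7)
The plan is to compute each of the four indices directly on the citation record ${\bf x}(n)$ produced by $n$ years of the simple deterministic model, exploiting the rigid staircase geometry of $\mathcal{B}({\bf x}(n))$. This record consists of $n$ consecutive plateaus, each of width $p$, with heights $cn, c(n-1), \ldots, c$. Two geometric facts drive every calculation: the top-right corners $(kp, c(n-k+1))$ of the $n$ plateaus are collinear on the line $y = c(n+1) - (c/p)x$ (call it $L_+$), and the inner corners $((k-1)p, c(n-k+1))$ are collinear on the parallel line $y = cn - (c/p)x$ (call it $L_-$). Consequently $\mathcal{B}({\bf x}(n))$ is sandwiched between $L_-$ and $L_+$ over $[0, pn]$; $L_-$ meets the axes at $(pn, 0)$ and $(0, cn)$, while $L_+$ meets them at $(p(n+1), 0)$ and $(0, c(n+1))$.

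For $h$: the square $[0, h]^2$ fits in $\mathcal{B}({\bf x}(n))$ iff $h$ lies at or below the staircase at $x = h$. Intersecting the diagonal $y = x$ with $L_-$ and $L_+$ yields $pcn/(p+c)$ and $pc(n+1)/(p+c)$ respectively; a rounded choice such as $k = \lceil cn/(p+c) \rceil$ witnesses the lower value, while no integer $h$ can clear the upper line, so $h({\bf x}(n))$ lies in the strip of slope $s_h = pc/(p+c)$ and width $d_h = pc/(p+c)$ above the origin. For $h'$: any maximal inscribed rectangle with corner at the origin has the form $[0, kp] \times [0, c(n-k+1)]$ for some $k \in \{1, \ldots, n\}$, so $h'({\bf x}(n))^2 = pc \cdot \max_k k(n-k+1)$. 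Since $k(n-k+1)$ has continuous maximum $(n+1)^2/4$ at $k = (n+1)/2$ and integer maximum at least $n(n+2)/4 \geq n^2/4$, we obtain $\sqrt{pc}\, n/2 \leq h'({\bf x}(n)) \leq \sqrt{pc}\,(n+1)/2$, giving $s_{h'} = d_{h'} = \sqrt{pc}/2$.

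For $w$ and $w'$ the hypotenuse of the inscribed right triangle must lie weakly below the staircase, with the binding constraints coming from the inner corners together with the extent of $\mathcal{B}$. For $w$, the hypotenuse $x + y = w$ forces $w \leq (k-1)p + c(n-k+1) = cn + (k-1)(p-c)$ for each $k$, plus $w \leq pn$; a split on the sign of $p - c$ identifies the binding bound as $\min(p, c) \cdot n$, which is attainable, yielding $s_w = \min(p, c)$ and $d_w = 0$. For $w'$, the decisive observation is already baked into the sandwich: the line $L_-$ is itself a valid hypotenuse with intercepts $(pn, 0)$ and $(0, cn)$, and since any inscribed right triangle with right angle at the origin must satisfy $a \leq pn$ (horizontal extent of $\mathcal{B}$) and $b \leq cn$ (height at $x = 0$), the product $ab$ is bounded above by $pcn^2$ with equality achieved, so $w'({\bf x}(n)) = n\sqrt{pc}$ exactly, giving $s_{w'} = \sqrt{pc}$ and $d_{w'} = 0$. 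The main technical nuisance is the integer-versus-continuous bookkeeping for $h$ and $h'$, where the staircase index $k$ is rarely exactly the real maximizer of the objective; the parallel-line sandwich handles this cleanly by trapping every integer optimum between the two continuous optima on $L_\pm$, which differ by a quantity independent of $n$.
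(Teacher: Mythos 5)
Your proof is correct and follows essentially the same route as the paper's: both sandwich the staircase region between the two parallel lines of slope $-c/p$ through its reentrant and outer corners (the paper's inscribed and circumscribed triangles $\triangle QOR \subseteq \mathcal{B}({\bf x}(t)) \subseteq \triangle Q^\star O R^\star$) and then read each index off the two bounding triangles, obtaining exact values for $w$ and $w'$ and width-$d_g$ strips for $h$ and $h'$. The only blemish is the illustrative aside ``$k=\lceil cn/(p+c)\rceil$'' (apparently missing a factor of $p$), which is not load-bearing since the sandwich already yields $h({\bf x}(n)) \geq pcn/(p+c)$ directly.
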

%They part company from $h$, however, when it comes to the equation determining that slope in terms of $p$ and $c$ --and differences among these equations play an important role in some of our later arguments.

\noindent Note that $s_h$ and $s_{h'}$ are equal \emph{if and only if} $p=c$, with $s_h < s_{h'}$ when $p \neq c$.  Arguments similar to those that follow for $h'$ show that any other symmetric scale invariant shape index also satisfies the Linear Growth axiom, with slope equal to some scalar multiple of $\sqrt{pc}$.

\begin{proof} Figure \ref{BofxFig} shows (as a stepped solid line) the graph of the correspondence $c_{{\bf x}(t)}$ after a $t$-year career, for a researcher with integer parameters $p$ and $c$ under the simple deterministic model; $\mathcal{B}({\bf x}(t))$ is the first quadrant region under this graph.  Note that the leftmost (highest) vertical strip of this region has width $p$ and height $ct$; this strip consists of $p$ separate bars of $\mathcal{B}({\bf x}(t))$ (each of width $1$, height $ct$) pushed together, and its area is the total number $pct$ of citations made, over the $t$-year career, to the $p$ papers published in year $1$ of the researcher's career.  The dotted line  $\overline{QR}$ and dashed line $\overline{Q^\star R^\star}$ form the hypotenuses of the inscribed and circumscribed triangles $\triangle QOR$ and $\triangle Q^\star O R^\star$, respectively, with   $\triangle QOR \subseteq \mathcal{B}({\bf x}(t)) \subseteq \triangle Q^\star O R^\star$.  The equations of these lines are:
\begin{equation}\label{inscr}
\overline{QR}: y = -  {\Big (}\frac{c}{p} {\Big )}x+ct
\end{equation}
and
\begin{equation}\label{circum}
\overline{Q^\star R^\star}: y = - {\Big (}\frac{c}{p} {\Big )}x+c(t+1)
\end{equation}

\begin{figure}[!ht]
{\centering
\includegraphics[height=71mm,width=93mm] {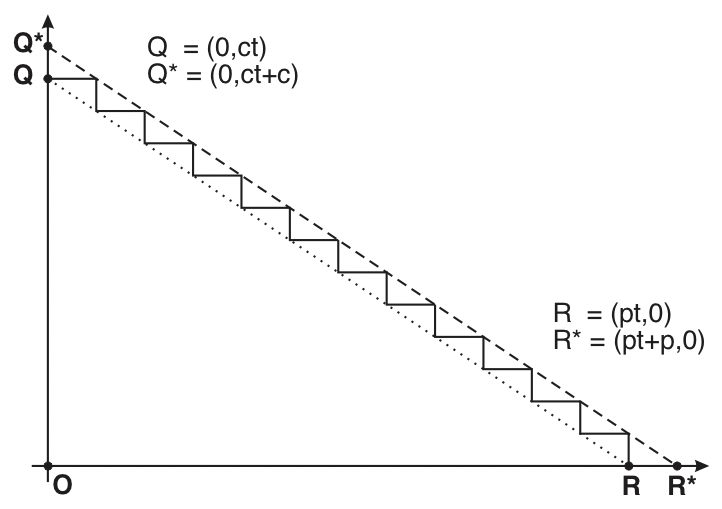}
\caption{$\mathcal{B}({\bf x}(t))$ under the simple deterministic model.}\label{BofxFig}}
\end{figure}

\noindent \emph{Argument for} $h$: Let $S(t)$, $h({\bf x}(t))$, and $S^\star (t)$ denote the side lengths of the largest squares inscribed (with one vertex at the origin) in regions $\triangle QOR$,
$ \mathcal{B}({\bf x}(t))$
and $\triangle Q^\star O R^\star$
respectively.  Thus

\begin{equation}\label{RunIneq}
S(t) \leq h({\bf x}(t)) \leq S^\star (t).
\end{equation}
We solve for $S(t)$ as the $x$ coordinate of the intersection of $\overline{QR}$ with the line $y=x$ , finding
\begin{equation}\label{SLine}
S(t) = {\Big(} \frac{pc}{p+c}  {\Big)}t.
\end{equation}
Similarly,
\begin{equation}\label{SStarLine}
S^\star (t) = {\Big(} \frac{pc}{p+c}  {\Big)}t + {\Big(} \frac{pc}{p+c} {\Big)}
\end{equation}
Inequality \eqref{RunIneq} now tells us that the graph of $h({\bf x}(t))$ lies between the two parallel lines with Equations \eqref{SLine} and \eqref{SStarLine}, which have a common slope of $\frac{pc}{p+c}$ and are spaced apart vertically by the same quantity $\frac{pc}{p+c}$.   Over a thirty-year career, the abcissa of the $h({\bf x}(t))$ graph (for $t \in \mathbb{N}$) will grow from $0$ to around $\frac{30pc}{p+c}$, a change thirty times the size of the vertical separation between the two lines bounding the graph, showing that the points on the graph lie ``almost along a straight line" of slope $\frac{pc}{p+c}$.

\medskip

\noindent \emph{Argument for} $h'$: Let $S'(t)$, $h'({\bf x}(t))$, and $S'\hspace{0.25mm}^\star (t)$ denote the square roots of the areas of the rectangles of maximal area inscribed (with one vertex at the origin) in regions $\triangle QOR$,
$ \mathcal{B}({\bf x}(t))$
and $\triangle Q^\star O R^\star$
respectively.  Thus

\begin{equation}\label{RunIneqPrime}
S'(t) \leq h'({\bf x}(t)) \leq S'\hspace{0.25mm}^\star (t).
\end{equation}

For these maximal area inscribed rectangles, the vertex diagonally opposite to the origin is at the midpoint of the hypotenuse, which is ${\big (} \frac{ct}{2}, \frac{pt}{2}{\big )} $ for $\overline{QR}$ and ${\big (} \frac{c(t+1)}{2}, \frac{p(t+1)}{2}{\big )} $ for $\overline{Q^\star R^\star}$. From this, one obtains

\begin{equation}\label{SLinePrime}
S'(t) = {\Big(} \frac{\sqrt{pc}}{2}  {\Big)}t,
\end{equation}
and
\begin{equation}\label{SStarLinePrime}
S'\hspace{0.25mm}^\star (t) = {\Big(} \frac{\sqrt{pc}}{2}  {\Big)}t + {\Big(} \frac{\sqrt{pc}}{2}{\Big)}.
\end{equation}
Inequality \eqref{RunIneqPrime} now tells us that the graph of $h'({\bf x}(t))$ lies between the two parallel lines with Equations \eqref{SLinePrime} and \eqref{SStarLinePrime}, which have a common slope of $\frac{\sqrt{pc}}{2}$ and are spaced apart vertically by the same quantity $\frac{\sqrt{pc}}{2}$.   An argument just like that for $h$ now justifies our claim that points on the $h'({\bf x}(t)) $ graph lie ``almost along a straight line" of slope $\frac{\sqrt{pc}}{2}$.

\medskip

\noindent \emph{Argument for} $w$: The largest isosceles right triangle fitting inside $\mathcal{B}({\bf x}(t))$ (with right angle at the origin) will have two legs of length of $|\overline{OQ}| = ct$ or of length $|\overline{OR}| = pt$, whichever is smaller.  It follows that $w({\bf x}(t))=\emph{min}(p,c)t$.

\medskip

\noindent \emph{Argument for} $w'$: Among right triangles fitting inside $\mathcal{B}({\bf x}(t))$ with right angle at the origin, the one with maximal area is clearly $\triangle QOR$ itself, with leg lengths $|\overline{OQ}| = ct$ and $|\overline{OR}| = pt$.  It follows that $w'({\bf x}(t))=(\sqrt{pc})t$.\end{proof}

\medskip

\section{An axiomatic characterization for the scale-invariant Hirsch citation index}  \label{SEC:Axiomatization}

Here we focus on Theorem \ref{T:char}, which provides an axiomatic characterization for the scale-invariant citation index $h'$.
Observe that all scale-invariant symmetric shape citation indices satisfy the properties of (Mon), (Sym), (SInv), (SSInv), (WResp), 
%-- this used to say SResp but we don't name anything SResp anymore, 
and (SqrtResp).  In Section \ref{SEC:LinearGrowth} we showed that $h$, $h'$, $w$, and $w'$ satisfy (LGr); similar arguments apply to all the scale-invariant symmetric shape citation indices.  The (MaxB) property also clearly holds for both $h$ and $h'$, and plays a fundamental role in selecting $h'$ alone, from the broader class of scale-invariant symmetric shape citation indices.

We start with two needed lemmas, the first being a result on sequences of positive real numbers.

\begin{lemma} \label{L:powerX}
Let $f\!\!:\mathbb{N} \rightarrow [1, + \infty)$  satisfy:
\begin{itemize}
\item[$(i)$] $f(1) = 1$, $f(2) = 2^a$ ($a>0$)
\item[$(ii)$] $f(mn)=f(m)f(n)$ (for $m>1$ and $n>1$)
\item[$(iii)$] $f(m) \leq f(n)$ whenever $m<n$.
\end{itemize}
Then, $f(x) = x^{a}$.
\end{lemma}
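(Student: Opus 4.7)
\medskip

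\noindent\textbf{Proof plan for Lemma \ref{L:powerX}.}

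The plan is to follow the classical sandwich argument used to identify the powers as the only monotone multiplicative functions on $\mathbb{N}$. First I would use condition $(ii)$ together with $(i)$ to iterate: an easy induction gives $f(2^k) = f(2)^k = 2^{ka}$ for every $k \in \mathbb{N}$, and more generally $f(n^k) = f(n)^k$ for any integer $n > 1$ and any $k \in \mathbb{N}$. These identities are the only algebraic input we need.

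Next, fix an integer $n > 1$ (the case $n = 1$ is immediate from $f(1) = 1$). For each large $k \in \mathbb{N}$, I would choose the unique $m = m(k) \in \mathbb{N}$ with
\begin{equation*}
2^{m} \;\leq\; n^{k} \;<\; 2^{m+1}.
\end{equation*}
Taking logarithms base $2$ gives $\tfrac{m}{k} \leq \log_{2} n < \tfrac{m+1}{k}$, so $\tfrac{m}{k} \to \log_{2} n$ as $k \to \infty$. Applying monotonicity $(iii)$ and then $(i)$--$(ii)$ to the sandwich yields
\begin{equation*}
2^{ma} \;=\; f(2^{m}) \;\leq\; f(n^{k}) \;=\; f(n)^{k} \;\leq\; f(2^{m+1}) \;=\; 2^{(m+1)a}.
\end{equation*}
Taking $k$-th roots (legitimate because $f(n) \geq 1$ and $a > 0$) gives
\begin{equation*}
2^{\,(m/k)\,a} \;\leq\; f(n) \;\leq\; 2^{\,((m+1)/k)\,a}.
\end{equation*}

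Finally, letting $k \to \infty$, both bounds converge to $2^{(\log_{2} n)\, a} = n^{a}$, forcing $f(n) = n^{a}$. The only subtle point, and the step I expect to take the most care, is verifying that $m(k)$ is well defined and that $\tfrac{m}{k}$ genuinely converges to $\log_{2} n$ (which requires $n > 1$, so that $n^{k} \to \infty$ and in particular $m(k) \to \infty$). Everything else is a routine unwinding of multiplicativity together with monotonicity, and no analytic properties of $f$ beyond monotonicity on integers are required.
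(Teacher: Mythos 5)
Your proof is correct, and the engine is the same one the paper uses---sandwiching powers of $n$ between powers of $2$ and playing multiplicativity off against monotonicity---but the logical packaging is genuinely different. The paper argues by contradiction at the \emph{minimal} counterexample: it first observes that the least $p$ with $f(p) \neq p^a$ must be prime, writes $f(p) = (p+\epsilon)^a$ with $0 < |\epsilon| < 1$ (pinning $|\epsilon|<1$ via $f(p-1)$ and $f(p+1)$, the latter being even and hence already known), and then exhibits a single sufficiently large exponent $m_0$ for which $(p+\epsilon)^{m_0}$ escapes past the nearest power of $2$ bracketing $p^{m_0}$, contradicting monotonicity. Your version is a direct limiting argument valid for every $n>1$ at once: you extract $2^{(m/k)a} \leq f(n) \leq 2^{((m+1)/k)a}$ and let $k \to \infty$. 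What your route buys is economy---no reduction to primes, no auxiliary $\epsilon$, and no need for the preliminary observation that the minimal counterexample is prime (which in the paper silently uses that $p$ odd guarantees $p^m$ is never an exact power of $2$). What the paper's route buys is that it never takes a limit: it needs only one explicit $m_0$, making the contradiction finite and self-contained. Your one flagged worry ($m(k)$ well defined and $m/k \to \log_2 n$) is harmless: $m(k) = \lfloor k\log_2 n\rfloor$ exists for every $k \geq 1$ once $n \geq 2$, and the convergence is immediate from $\tfrac{m}{k} \leq \log_2 n < \tfrac{m+1}{k}$. Both proofs are sound.
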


\begin{proof}
If not, let $p>2$ be the minimum integer such that $f(p) \neq p^a$. Then $p$ must be prime, else $p=m n$ with $m>1$ and $n>1$;
as $m,n<p$, $f(m) = m^a$ and $f(n) = n^a$ so that by condition $(ii)$, $f(p) = f(mn) = f(m)f(n)= m^an^a = p^a$, a contradiction. %As $(m \cdot n)^a \neq f(m \cdot n)$ we have a contradiction with Condition-$(ii)$.
As $f(p) \neq p^a$, let $f(p) = (p+\epsilon)^a$ with $\epsilon \neq 0$. It is clear that  $f(p-1)=(p-1)^a$ because $p-1 <p$ and $f(p+1)=(p+1)^a$ because $p+1 = 2k$ with $k<p$. Thus, by condition $(iii)$ we know $|\epsilon| <1$.

For any $m>0$ there is a unique integer $b(m)$ such that
$$
p^m/2 < 2^{b(m)} < p^m < 2^{b(m)+1} < 2p^m.
$$
Assume first that  $\epsilon >0$. Choose $m_0 \in \mathbb{N}$ large enough to make $(1 + \frac{\epsilon}{p})^{m_0}>2$, whence
$$
(p+\epsilon)^{m_0} = p^{m_0} \left(1 + \frac{\epsilon}{p}\right)^{m_0}>2p^{m_0} > 2^{b(m_0)+1}.
$$
Thus, by conditions $(ii)$ and $(iii)$
$$
[f(p)]^{m_0} =f(p^{m_0}) \leq f(2^{b(m_0)+1}) = (2^{b(m_0)+1})^a < [(p + \epsilon)^{m_0}]^a = [(p + \epsilon)^{a}]^{m_0} = [f(p)]^{m_0},
$$
a contradiction.

Assume now $\epsilon <0$.  Choose $m_0 \in \mathbb{N}$ large enough to make $(1 + \frac{\epsilon}{p})^{m_0}< 1/2$, whence
$$
(p+\epsilon)^{m_0} = p^{m_0} \left(1 + \frac{\epsilon}{p}\right)^{m_0}<p^{m_0}/2 < 2^{b(m_0)}.
$$
Thus, by conditions $(ii)$ and $(iii)$
$$
[f(p)]^{m_0} =f(p^{m_0}) \geq f(2^{b(m_0)}) = (2^{b(m_0)})^a > [(p + \epsilon)^{m_0}]^a = [(p + \epsilon)^{a}]^{m_0} = [f(p)]^{m_0},
$$
a contradiction.

Thus,  $f(x) = x^a$ for all $x \in \mathbb{N}$.
%The existence of $m_0$ follows from the fact that the sequence
%$$ \left\{ \dfrac{2^{b(m)+1}}{p^m} \right\}_{m\geq 1}$$ is upper bounded by $2$. Instead, the sequence
%$$ \left\{ \dfrac{(p+\epsilon)^m}{p^m} \right\}_{m\geq 1}$$ tends to $+\infty$ since
%$$ \dfrac{(p+\epsilon)^m}{p^m} = \left( 1+\dfrac{\epsilon}{p} \right)^m. $$
%Hence, there exists a minimum $m_0 \in \mathbb{N}$ such that $(p+\epsilon)^{m_0} > 2^{b(m_0)+1}$.
%
%The proof for the case $\epsilon < 0$ is mutatis mutandis the same.
\end{proof}

\begin{lemma}\label{LSR}
If the Hirsch power index $h'_a$ satisfies Linear Growth then $a = \frac{1}{2}$, so that $h'_a = h'$.
\end{lemma}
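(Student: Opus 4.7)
The plan is to compute the asymptotic behavior of $h'_a(\mathbf{x}(t))$ under the simple deterministic model with integer parameters $p$ and $c$, and then show that the only exponent $a > 0$ compatible with Linear Growth is $a = 1/2$. I exploit the equivalent formulation noted right after Definition~\ref{D:sih}, namely $h'_a(\mathbf{x}) = \max_{(x,y) \in \mathcal{B}(\mathbf{x})} (xy)^a$, so the task reduces to controlling $M(t) := \max_{(x,y) \in \mathcal{B}(\mathbf{x}(t))} xy$.

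The key geometric input comes from the proof of Proposition~\ref{LGrProp}: $\mathcal{B}(\mathbf{x}(t))$ is sandwiched between the inscribed triangle $\triangle QOR$ and the circumscribed triangle $\triangle Q^\star OR^\star$, whose hypotenuses lie along the parallel lines $y = -(c/p)x + ct$ and $y = -(c/p)x + c(t+1)$ respectively. A one-variable maximization shows that on the triangular region bounded by $x,y \geq 0$ and $y \leq -(c/p)x + H$, the product $xy$ attains its maximum at the midpoint of the hypotenuse, giving $pH^2/(4c)$ (the gradient of $xy$ vanishes only at the origin, so the maximum must occur on the hypotenuse, where $x \mapsto x\bigl(-\tfrac{c}{p}x+H\bigr)$ is a downward parabola peaking at $x = pH/(2c)$). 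Taking $H = ct$ and $H = c(t+1)$, I obtain
\begin{equation*}
\frac{pc\, t^2}{4} \;\leq\; M(t) \;\leq\; \frac{pc\, (t+1)^2}{4},
\qquad \text{hence} \qquad
\Bigl(\tfrac{pc}{4}\Bigr)^{a} t^{2a} \;\leq\; h'_a(\mathbf{x}(t)) \;\leq\; \Bigl(\tfrac{pc}{4}\Bigr)^{a} (t+1)^{2a}.
\end{equation*}

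The final step extracts $a = 1/2$ from the Linear Growth axiom via a growth-rate dichotomy. If $2a > 1$, the lower bound already grows superlinearly in $t$, so no line of finite slope can dominate $h'_a(\mathbf{x}(t))$ for all $t \in \mathbb{N}$, and no strip of bounded width between two parallel lines contains the sequence of points $(t, h'_a(\mathbf{x}(t)))$. If $2a < 1$, the upper bound is sublinear, yet $h'_a(\mathbf{x}(t)) \to \infty$ (since $a > 0$), so any confining strip must have strictly positive common slope $s > 0$; but then the lower bounding line $y = st - d$ eventually exceeds the sublinear $h'_a(\mathbf{x}(t))$, again contradicting Linear Growth. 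The only remaining case is $2a = 1$, giving $a = 1/2$ and $h'_a = h'$.

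I do not expect a genuine obstacle; the triangle maximization is elementary and the strip-confinement argument is routine. The one subtlety to bear in mind is that $h'_a(\mathbf{x}(t))$ is unbounded for every $a > 0$, which rules out the otherwise tempting loophole of a horizontal strip in the $2a < 1$ case.
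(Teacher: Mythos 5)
Your proof is correct and follows essentially the same route as the paper: both arguments rest on the fact that under the simple deterministic model $h'_a(\mathbf{x}(t))$ grows like a constant times $t^{2a}$, which is compatible with confinement in a strip between two parallel lines only when $2a=1$. The paper compresses this into ``$h'_a = (h')^{2a}$, so Linear Growth fails for $a \neq \tfrac{1}{2}$,'' citing Proposition~\ref{LGrProp}; you simply re-derive the triangle sandwich explicitly and spell out the superlinear/sublinear dichotomy that the paper leaves as immediate.
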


\begin{proof}  In Proposition~\ref{LGrProp} in Section \ref{SEC:LinearGrowth} we show that the scale-invariant Hirsch index $h' = h'_{\frac{1}{2}}$ satisfies Linear Growth (with slope greater than zero when parameters $p$ and $c$ are each strictly positive integers).   As $h'_a = (h')^{2a}$ it follows immediately that $h'_a$ \emph{fails} Linear Growth for $a \neq \frac{1}{2}$.  \end{proof}

%\highlight{BILL}: If Scale Responsiveness is dropped from the theorem's hypothesis, then the next paragraph should be written slightly differently. Again I would say, it is more standard to keep in the assumptions only those properties verified in the three versions. I have modified the statement of the theorem accordingly and previous version is now commented, choose the one you prefer.

Our main theorem comes in three forms.  The first axiomatizes the parameterized family of all scale-invariant Hirsch powers $\{h_a\}_{a > 0} = \{(h')^{2a}\}_{a > 0}$.  The Main version characterizes the scale-invariant Hirsch index $h'$, by adding Linear Growth to the list of axioms used in the first version.  The Strong version provides an alternate characterization of $h'$ by substituting Square Root Responsiveness for the combination of Weak  Responsiveness and  Linear growth.  The complementary virtues of the Main and Strong versions were discussed at the end of Section \ref{SomeAxioms}.  Note also that IIA for Citation Indices could replace Max-Bounded in any of the three characterizations, thanks to Proposition \ref{IIAandMaxB}.
\begin{theorem}[\bf Characterization Theorem] \label{T:char}
Let $g\!\!:X \rightarrow [1, + \infty)$ be a scientific citation index %\footnote{Recall that scientific citation indices are required to satisfy $g(1) = 1$.}  
satisfying
Monotonicity,
%\eqref{E:Mon}, 
Symmetry,
% \eqref{E:Sym}, 
 Strong Scale Invariance,
 % \eqref{E:SSI}, 
  and Max-Bounded
  %\eqref{E:MB}.
\renewcommand{\labelenumi}{\roman{enumi}}
\begin{enumerate}
\item[$(i)$] \emph{[The {\bf Scale invariant Hirsch powers characterization}]} If $g$ additionally satisfies Weak Responsiveness,
 %\eqref{E:SR}, 
 then $g =  h'_a$ for some $a > 0$.
%(as defined in \eqref{EQ:HirschParametric}),

\item[$(ii)$] \emph{[The {\bf Main scale invariant Hirsch characterization}]} If $g$ additionally satisfies both Weak Responsiveness 
%\eqref{E:SR} 
and Linear Growth, then $g =  h' = h'_{\frac{1}{2}}$,

\item[$(iii)$] \emph{[The {\bf Strong scale invariant Hirsch characterization}]} If $g$ additionally satisfies Square Root Responsiveness,
% \eqref{E:SQR}, 
 then  $g =  h'= h'_{\frac{1}{2}}$.\footnote{With no additional assumption of Linear Growth.}

\end{enumerate}
\end{theorem}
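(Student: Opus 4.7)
The strategy is to derive parts $(i)$, $(ii)$, $(iii)$ in sequence, with the bulk of the work going into $(i)$. First, I would exploit Strong Scale Invariance to uncover the multiplicative structure of $g$ on ``rectangular'' records. Setting $\mathbf{y}=(1)$ in the (SSInv) identity and using $g(1)=1$ yields $g(k\mathbf{x}^{\leftrightarrow m})=g(\mathbf{x})\cdot g(k\cdot (1)^{\leftrightarrow m})$; specialising $\mathbf{x}=(j)$ and writing $\phi(m,k)$ for the value of $g$ on $m$ papers each with $k$ citations gives $\phi(m,kj)=f(j)\,\phi(m,k)$ where $f(k):=g(k)$. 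Iterating in the second argument yields $\phi(m,k)=f(k)\phi(m,1)$, and Symmetry supplies $\phi(m,1)=\phi(1,m)=f(m)$, hence $\phi(m,k)=f(m)f(k)$. In particular $f$ is multiplicative on $\mathbb{N}$; Monotonicity makes it weakly increasing, and (WResp) combined with Monotonicity gives $f(2)\geq g(2,2)>g(1)=1$, so $f(2)=2^a$ with $a>0$. Lemma~\ref{L:powerX} then delivers $f(k)=k^a$ on all of $\mathbb{N}$, and therefore $\phi(m,k)=(mk)^a$.

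Second, I would squeeze $g(\mathbf{x})$ against $h'_a(\mathbf{x})$ from both sides. For each $k\leq l(\mathbf{x})$ the rectangular record $R_k:=(x_k,\ldots,x_k)$ of length $k$ is dominated by $\mathbf{x}$; Monotonicity together with the identity $g(R_k)=\phi(k,x_k)=(kx_k)^a$ gives $g(\mathbf{x})\geq\max_k (kx_k)^a = h'_a(\mathbf{x})$. For the reverse bound, since $\mathbf{x}$ is weakly decreasing, $\mathbf{x}$ coincides with the componentwise maximum of the records $R_1,\ldots,R_{l(\mathbf{x})}$; a straightforward induction on $j$, applying (MaxB) at each step to $M_j:=\max(R_1,\ldots,R_j)$ and $R_{j+1}$, yields $g(\mathbf{x})\leq\max_k g(R_k)=h'_a(\mathbf{x})$. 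These two bounds prove $g=h'_a$, establishing $(i)$.

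For parts $(ii)$ and $(iii)$ only the exponent remains to be pinned down. Adding Linear Growth forces $a=\tfrac12$ via Lemma~\ref{LSR}, giving $(ii)$. For $(iii)$, (SqrtResp) reads $g(2)=\sqrt 2$, so $2^a=\sqrt 2$ and again $a=\tfrac12$; note that (SqrtResp) alone already yields $f(2)=\sqrt 2>1$, so the multiplicativity derivation of the first step goes through without invoking (WResp).

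The main obstacle I anticipate is the (MaxB) step: one must check that the componentwise maximum of $R_1,\ldots,R_j$ is itself a legitimate citation record of length $j$ (its entries being $x_1,\ldots,x_j$, weakly decreasing precisely because $\mathbf{x}$ was), and that the two-argument (MaxB) axiom genuinely iterates to give the multi-argument inequality used above. Once that bookkeeping is in place, the remainder is a direct combination of the multiplicativity derivation, Lemma~\ref{L:powerX}, and the numerical identification $2^a=\sqrt 2$.
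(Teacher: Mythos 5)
Your proposal follows essentially the same route as the paper's proof: use (SSInv) with $\mathbf{y}=(1)$ plus Symmetry to get $g$ multiplicative on rectangular records, invoke Lemma~\ref{L:powerX} to pin down $g(k)=k^a$, squeeze a general record between its inscribed rectangles via (Mon) and iterated (MaxB), and finish $(ii)$ with Lemma~\ref{LSR} and $(iii)$ with $2^a=\sqrt2$. One step as written is backwards: since $(2)\preceq(2,2)$, Monotonicity gives $g(2)\leq g(2,2)$, not $f(2)\geq g(2,2)$, so that inequality cannot yield $f(2)>1$; however, your own identity $\phi(2,2)=f(2)^2$ gives $f(2)^2=g(2,2)>1$ immediately (this is exactly how the paper argues), so the slip is trivially repaired and does not affect the rest of the argument.
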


%\begin{theorem}[\bf Characterization Theorem] \label{T:char}
%Let $g\!\!:X \rightarrow [1,\infty)$ be a scientific citation index\footnote{Recall that scientific citation indices are required to satisfy $g(1) = 1$.}  satisfying
%Monotonicity \eqref{E:Mon}, Symmetry \eqref{E:Sym}, Strong Scale Invariance \eqref{E:SSI}, Max-Bounded \eqref{E:MB}, and Scale Responsiveness \eqref{E:SR}.
%\renewcommand{\labelenumi}{\roman{enumi}}
%\begin{enumerate}
%\item ({\bf Preliminary Version}) Then for some $a > 0$, $g =  h'_a$
%(as defined in Equation \eqref{EQ:HirschParametric}).
%\item ({\bf Main Version}) If $g$ additionally satisfies Linear Growth,
%then $g =  h' = h'_{\frac{1}{2}}$.
%
%\item ({\bf Alternate Version}) If, in place of Scale Responsiveness, $g$  satisfies Square Root Responsiveness \eqref{E:SQR},
%then  $g =  h'= h'_{\frac{1}{2}}$\footnote{With no additional assumption of Linear Growth.}.
%
%
%\end{enumerate}
%\end{theorem}

\begin{proof} [of $(i)$]
By Strong Scale Invariance with $k =2, m = 1$, $g(1)g(2,2) = g(1,1)g(2)$, and by Symmetry $g(1,1) = g(2)$, so that $g(1)g(2,2) = g(2)^2$.  But $g(1) = 1$ for every scientific citation index (Definition \ref{D:sci}), and $g(2,2) > 1$ by Weak Responsiveness, so $g(2)^2 > 1$, whence $g(2) > 1$. Choose $a > 0$ with $g(2) = 2^a$ ; we'll show $g = h_a$.  Consider first the restriction of $g$ to the subdomain consisting of all citation records $(k)$ that record a single paper with $k$ citations. %Note that $g(1)=1$ as required for a scientific citation index,  so that 
This restriction of $g$ can be considered a function from $\mathbb{N}$ to $[1, + \infty)$, as in Lemma \ref{L:powerX}.

As $g$ satisfies $g(m)g(n) = g(mn)g(1)$ by (SSInv),  $g(mn)=g(m)g(n)$ holds for all $m>1$ and $n>1$.  Also,  $g(m) \leq g(n)$ holds whenever $m<n$ by (Mon), so we conclude that $g$ verifies the three requirements of Lemma~\ref{L:powerX}. Thus, $g(x) =x^a$ holds for all $x \in \mathbb{N}$, and it follows that this restriction of $g$ to the subdomain formed by citation records of a single paper is uniquely determined and coincides with the corresponding restriction of $h_a$.
%\vskip 0.25truecm

Next, consider $g$ over the subdomain of citation records of form ${\bf m}_n = (\overbrace{m,\dots,m}^n)$---constant vectors that represent an arbitrary number $n$ of published papers, each with the same number $m$ of citations.  By (SSInv) we have
$g({\bf m}_n) g(1) = g({\bf 1}_n) g(m)$, where $g(1)=1$ and $g({\bf 1}_n) = g(n)$ by (Sym). Thus,
$g({\bf m}_n) = g(m)g(n)$, which determines $g$ on this larger subdomain.

Finally, consider $g$ over the full domain $X$ of citation records.
Let ${\bf x} = (x_1,x_2, \dots,x_{l({\bf x})}) \in X$ 
and let $l$ stand for $l({\bf x})$ in the next three equations. 
%\highlightb{(where $l({\bf x})$ is replaced in the next three equations by the simplified notation $l$)} 
Let ${\bf x_i}_i = (\underbrace{x_i,\dots,x_i}_i)$ for $i=1,2, \dots,l.$
As ${\bf x_i}_i  \preceq  {\bf x}$, by (Mon) it follows that
\begin{equation} \label{EQ:FirstEquality}
g({\bf x_i}_i) \leq g({\bf x}),  i=1,2, \dots,l.
\end{equation}

\noindent By (MaxB) we have
\begin{equation} \label{EQ:Equality}
g({\bf x} ) \leq \max  \{ g({\bf x_1}_1), g({\bf x_2}_2), \dots, g({\bf x_l}_l)\},
\end{equation}
so that inequalities ~\eqref{EQ:FirstEquality} and ~\eqref{EQ:Equality} together yield
\begin{equation} \label{EQ:EqEquality}
g({\bf x} ) = \max  \{ g({\bf x_1}_1), g({\bf x_2}_2), \dots, g({\bf x_l}_l)\}.
\end{equation}
As
$g({\bf x_i}_i) = g(x_i)g(i)$ for all $i$, it follows that
$$
g({\bf x} ) =  \max_{j=1,\dots,l({\bf x})} \ \{  g(x_j)g(j) \}.
$$
Finally, as
$g(m)g(n)=g(m\, n)$ we deduce
$$
g({\bf x} ) =  \max_{j=1,\dots,l({\bf x})} \ \{  g(j \cdot x_j) \},
$$
whence, by Lemma~\ref{L:powerX},
$$
g({\bf x} ) =  \max_{j=1,\dots,l({\bf x})} \ \  \{ (j \, x_j)^{a} \} = h_a({\bf x}).
$$
This completes the proof of part $(i)$.  Part $(iii)$ follows immediately, and part $(ii)$ follows from $(i)$ together with Lemma \ref{LSR}.  \end{proof}

We conclude the section by discussing independence of our axioms.  The main result here is:

\begin{proposition}  \label{AxInd}
The five axioms (Mon), (Sym), (MB), (SSInv), %\dots
%\highlightb{dots removed}
\begin{enumerate}
\item[$1.$]
%\dots
and (SqrtResp) that uniquely characterize the scientific citation index $h'$ (Part $(iii)$ of Theorem \ref{T:char}) are independent.
\item[$2.$]
% \dots
and (WResp) that uniquely characterize the scientific citation powers $h_a$ (Part $(i)$ of Theorem \ref{T:char}) are independent.
\end{enumerate}
\end{proposition}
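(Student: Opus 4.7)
The plan is to prove the independence claim in each part by exhibiting, for each of the five axioms listed, a scientific citation index that satisfies the other four but violates the given one---five distinct counterexamples per part.

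For Part 1 (based on SqrtResp) the four easy cases go as follows. To fail (SqrtResp) alone, take any scale-invariant Hirsch power $h'_a$ with $a \neq 1/2$; by part $(i)$ of Theorem \ref{T:char} this index satisfies (Mon), (Sym), (MB), and (SSInv), while $h'_a(2) = 2^a \neq \sqrt{2}$. To fail (SSInv), take $f \circ h'$ where $f$ is any monotone function with $f(1)=1$ and $f(\sqrt{2}) = \sqrt{2}$ that is not a power function; the discussion after Theorem \ref{t:SII} already notes that this composition inherits (Mon), (Sym), (MB), and (SqrtResp) from $h'$, while (SSInv) would force $f$ to be multiplicative. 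To fail (MB), take the scale-invariant Woeginger index $w'$: it satisfies (Mon), (Sym), (SSInv), and (SqrtResp) (the last because $w'(2) = \sqrt{2}$), but for ${\bf z} = (2,1) = \max((2),(1,1))$ one has $w'(2,1) = 2$ whereas $\max\{w'(2),w'(1,1)\} = \sqrt{2}$. To fail (Sym), take $g({\bf x}) = \sqrt{x_1}$, which depends only on the most-cited paper; direct checks confirm (Mon), (MB), (SSInv), and (SqrtResp), while $g(2) = \sqrt{2} \neq 1 = g(1,1) = g((2)^{\ast})$.

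The main obstacle is constructing an index satisfying (Sym), (MB), (SSInv), and (SqrtResp) while failing (Mon). The difficulty is that the proof of Theorem \ref{T:char} already shows these four axioms jointly force $g({\bf m}_n) = \sqrt{mn}$ on constant records, hence $g({\bf x}) \leq h'({\bf x})$ on every record. To produce an inversion $g({\bf x}) > g({\bf y})$ with ${\bf x} \preceq {\bf y}$ we must therefore reduce $g$ strictly below $h'$ on a non-constant orbit while keeping (MB) intact. The plan is to define $g = h'$ on every scaling/reflection orbit with two exceptions: on the orbit containing $(3,1)$ and its reflection $(2,1,1)$ set $g(3,1) = g(2,1,1) = 1$, and on the orbit containing $(3,2)$ and its reflection $(2,2,1)$ set $g(3,2) = g(2,2,1) = 3/2$; then extend to scaled and replicated representatives by the rule $g(k\,{\bf z}^{\leftrightarrow m}) = \sqrt{km}\,g({\bf z})$. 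With this rule in place (SSInv) is automatic and (Sym) holds because reflected records lie in the same orbit; (SqrtResp) is undisturbed because the constant orbit is untouched; and (Mon) fails because $(2,2) \preceq (2,2,1)$ while $g(2,2) = 2 > 3/2 = g(2,2,1)$. The delicate step is (MB); the guiding observation is that for every ${\bf z}$ the decomposition ${\bf z} = \max({\bf z}_{j^{\ast}}, \text{remainder})$ along the column $j^{\ast}$ achieving $h'({\bf z})$ produces $\max\{g({\bf z}_{j^{\ast}}), g(\text{remainder})\} \geq h'({\bf z}) \geq g({\bf z})$, so the reduced values only risk binding in decompositions whose summands themselves live in the two modified orbits; for these, direct enumeration of the small representatives---e.g., $(2,2,1) = \max((2,2),(1,1,1))$ yields $\max\{2,\sqrt{3}\} = 2 \geq 3/2$, and $(3,1) = \max((3),(1,1))$ yields $\max\{\sqrt{3},\sqrt{2}\} = \sqrt{3} \geq 1$---confirms the inequality, and the same check propagates through scaled representatives.

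Part 2 (using WResp in place of SqrtResp) follows the same blueprint. For (WResp), the constant index $g \equiv 1$ mentioned in the text trivially satisfies the other four axioms but fails $g(2,2) > g(1) = 1$. For (SSInv), the original Hirsch index $h$ itself serves: it satisfies (Mon), (Sym), (MB), and (WResp) (since $h(2,2) = 2 > 1$), but $h(2) = 1$ rules out any form of strong scale invariance. For (Sym), (MB), and (Mon) the counterexamples from Part 1 continue to witness independence, since each of them additionally satisfies the weaker (WResp): the indices $\sqrt{x_1}$ and $w'$ both give $g(2,2) > 1$, and the Mon-dropping construction yields $g(2,2) = 2 > 1$. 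The single substantive obstacle across both parts is therefore the Mon-dropping construction detailed above.
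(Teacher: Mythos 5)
Most of your individual witnesses are fine, and some are simpler than the paper's: $g({\bf x})=\sqrt{x_1}$ for the failure of (Sym), the pair $(2)$, $(1,1)$ with componentwise maximum $(2,1)$ showing $w'$ violates (MaxB), and using $h$ itself as the (SSInv)-violator in Part 2 all check out. However, the construction you identify as the crux --- the index violating only (Mon) --- does not work, and since you reuse it in Part 2 this breaks the independence claim for (Mon) in both parts. The problem is that (MaxB) constrains your index in the direction you did not check: it is quantified over \emph{all} pairs ${\bf x},{\bf y}$, so if you lower $g$ on two records you are forced to lower it on their componentwise maximum as well. Concretely, take ${\bf x}=(3,1)$ and ${\bf y}=(2,1,1)$, the two records on which you set $g=1$. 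Their componentwise maximum is ${\bf z}=(3,1,1)$, which is primitive, self-dual, and lies in neither modified orbit (every length-$3$ member of the orbit of $(3,1)$ has the form $(2k,k,k)$, and $(3,1,1)$ is not of that form), so your rule gives $g({\bf z})=h'(3,1,1)=\sqrt{3}$. Then $g({\bf z})=\sqrt{3}>1=\max\{g({\bf x}),g({\bf y})\}$, violating \eqref{E:MB}. Your ``guiding observation'' and the subsequent enumeration only verify decompositions in which a modified record plays the role of ${\bf z}$; those are the harmless cases, since lowering $g({\bf z})$ only makes the inequality easier. The dangerous decompositions are those in which the modified records are the two \emph{arguments} and their maximum escapes the modified orbits.

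The paper avoids this trap by building its (Mon)-violator $t_{\frac12}$ bottom-up rather than by perturbing $h'$ on selected orbits: it prescribes a multiplicative but non-monotone value function on single-paper records (sending $3\mapsto\sqrt{5}$ while keeping $4\mapsto 2$), extends by (Sym) and (SSInv) to constant records, and then defines $g({\bf x})$ as the maximum of $g$ over the constant records $(\underbrace{x_i,\dots,x_i}_{i})$ for $i=1,\dots,l({\bf x})$. Any index of this ``maximum over columns'' form, $g({\bf x})=\max_i \phi(i,x_i)$, satisfies (MaxB) automatically, because each column of a componentwise maximum is a column of one of the two arguments; monotonicity then fails precisely because the single-paper values were scrambled, $t_{\frac12}(3)=\sqrt{5}>2=t_{\frac12}(4)$. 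Your orbit-perturbation cannot be written in this form (it would force $g(3)\leq g(3,1)=1$), and patching it by also reducing $g$ on $(3,1,1)$ only pushes the problem up the lattice of componentwise maxima. I would replace this one witness with the paper's $t_{\frac12}$, or with any symmetric, multiplicative, non-monotone column-wise maximum; the other four witnesses in each part can stand as you have them.
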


\begin{proof}
%[of \ref{AxInd}]
\begin{enumerate}
\item[$1.$]
\begin{itemize}
\item $h_{a}$ for $a>0$ with $a \neq \frac{1}{2}$ violates (SqrtResp) but satisfies (Mon), (Sym), (MaxB) and (SSInv) (as well as Scale Responsiveness).

\item The following index $t_\frac{1}{2}$ satisfies (Sym), (MaxB), (SSInv) and (SqrtResp), but violates (Mon).  For integers $j, k$ let ``$j \neq \dot{k}$" stand for ``$j$ is not an integer multiple of $k$."  We first define $t_\frac{1}{2}$ on the restricted domain of one paper with several citations:
$$
t_\frac{1}{2}(x) = \left\{
           \begin{array}{ll}
             x^\frac{1}{2}, & \hbox{if} \; x \neq \dot{3} \\
             (y \cdot 5^m)^\frac{1}{2}, & \hbox{if} \; x=y \cdot 3^m \; \, y \neq \dot{3} \; \, \text{and} \; \, m \geq 1
           \end{array}
         \right.
$$
%and after extended to the full domain $X$ by the properties of (Sym), (SSI) and (MB) with equality.
The values of $t_\frac{1}{2}$ are now fixed by (Sym) on all vectors of  type ${\bf 1}_n$ for $n \in \mathbb{N}$, with (SSInv) then determining  values on constant vectors ${\bf m}_n$ for all $m,n \in \mathbb{N}$. Finally, (MaxB) with equality fixes the values of $t_\frac{1}{2}$ for all ${\bf x} \in X$. (SqrtResp) is obviously fulfilled.

The index violates (Mon) because $t_\frac{1}{2}(3) = \sqrt{5} > 2= t_\frac{1}{2}(4) $.

%\item Let $g_a$ be such that $g(x) = x^a$ for all $x \in \mathbb{N}$. Let
%$$
%g({\bf 1}_n) = \left\{
%           \begin{array}{ll}
%             (2^b \cdot (1.01)^b)^a, & \hbox{if} \; n = 2^b \; \; \text{for some} \; b>0 \\
%             (n \cdot (1.01)^b)^a & \hbox{if} \; 2^{b-1} < n < 2^b \; \; \text{for some} \; b>0
%           \end{array}
%         \right.
%$$
%and after extended to the full domain by (SSInv) and (MaxB) with equality. The resulting index does not satisfy (Sym) since $2^a=g(2)\neq g(1,1)=2^a (1.01)^a$, but satisfies (Mon) because if ${\bf y}$ is defined from ${\bf x}$ by adding a new paper with one citation it follows $g({\bf x}) \leq g({\bf y})$, and if ${\bf y}$ is defined from ${\bf x}$ by adding $1$ citation to one paper with at least one citation, then $g({\bf x}) \leq g({\bf y})$.

\item For an index $d$ satisfying (Mon), (MaxB), (SSInv), and (SqrtResp) but not (Sym) let $b$ be any strictly positive real constant with $b \neq  \frac{1}{2}$.  Set $d(n)=n^{\frac{1}{2}},$ $d({\bf 1}_m ) = m^b$, and $d({\bf n}_m ) = (n^\frac{1}{2})(m^b)$.
Finally, apply (MaxB) with equality to extend the values of $d$ over all ${\bf x} \in X$.

\item The scale-invariant Woeginger index $w'$ satisfies (Mon), (Sym), (SSInv), (SqrtResp) but violates (MaxB): Let ${\bf x} = (4,4)$, ${\bf y} = (2,2,2,2)$ and ${\bf z} = (4,4,2,2)$. Then $w'({\bf x}) = \sqrt{8} = w'({\bf y})$, but $w'({\bf z}) = \sqrt{16}=4.$ Thus, $w'({\bf z}) > \max \{ w'({\bf x}),  w'({\bf y}) \}$.

\item The citation index $f$ for which $f(1) = 1$ and $f({\bf x})=\sqrt{2}$ for all ${\bf x} \neq (1)$ violates (SSInv) since $\sqrt{2} =f(4) f(1) \neq (f(2))^2 = 2$, but trivially satisfies (Mon), (Sym), (MaxB), and (SqrtResp).
%\item $g_a({\bf x}) = \max\limits_{i} \{ (i+x_i-1)^a \}$ violates (SI), but satisfies (N), (Mon), (Sym) and (MB).
\end{itemize}
\item[$2.$] Indices $t_{\frac 12}$, $d$, $w'$ and $f$ satisfy (WResp), so most independencies follow as in part 1. The index $g \equiv 1$ satisfies (Mon), (Sym), (MaxB) and (SSInv) and violates (WResp). 
\end{enumerate} \end{proof}

%\highlight{Josep, we now use WResp in place of ScResp, no longer even define ScResp as the abbreviation for Scale Responsiveness, and no longer use the term Strong Responsiveness or its abbreviation as SResp.  Consequently I had to make some changes to the exact forms of responsiveness -- WResp, ScResp, SResp, SqrtResp -- discussed in this section and in particular this happens with the material on dependencies.  I think this all works fine but you should check!} \highlightb{Well-done!}

%\highlight{BILL: Is part $2.$ correct? This is what I intended to explain you when we had the conversation. Ig this is true, then the difficulties with independence are only for part (ii) of the theorem.
%\newline
%Maybe only after the proof the difficulties with part (ii) can be exposed.  JOSEP: Agreed!}

%\vspace{-2mm}

Other dependencies include, of course, that (SInv) follows from (SSInv), while Lemma \ref{LSR} shows that in the presence of (Mon), (Sym), (MaxB), and (SSInv), (SqrtResp) is equivalent to (LGr) + (SResp). The reader will notice that Proposition \ref{AxInd} omits any claim of independence for the axioms used in part $(ii)$  of Theorem \ref{T:char}.
This is because Linear Growth axiom (LGr) would seem to have strong structural implications.  In particular, we do not know whether the presence of (LGr) allows any of the other part $(ii)$ axioms to be relaxed or even dropped.

\section{Simulations with Poisson Noise}\label{SEC:Simulations}

The actual rate at which an author publishes papers is subject to a variety of unpredictable factors, many of which are not under her control, and have little to do with the intrinsic quality of her work. Was the paper assigned to a demanding referee, or a lenient one?  Did the journal have a large backlog at the time?  Did a coauthor insist that two related papers be combined?  Similar factors apply to the rate at which an already published paper gathers citations, if only because these factors apply to the publication of  papers, by others, that are providing the citations.

Here we incorporate these features by viewing the accrual of publications and citations as a random process that can be modeled by ``Monte Carlo" simulations.  We were motivated by a suspicion that such simulations might reveal systematic differences between the indices of Hirsch and Woeginger in their original form, and their scale invariant versions.  In particular our initial thought was that the scale invariant versions might be more robust under noisy conditions---less likely to be knocked off track, because they respond more flexibly to change.  For example, there might be more opportunities for a single new citation (to a randomly chosen publication) to add to the area of the largest inscribed rectangle than would be the case for the largest inscribed square.

Somewhat later, we were looking at a list of Hirsch index values for all academic researchers in Spain (see \cite{WebSpain})
and were struck by the large number of researchers with identical index values. For instance, $1258$ researchers from Spain have an $h$-index equal to $23$ and $2220$ have an $h$ index of $16$. There are even surprisingly many ties among the $5000$ most cited researchers in the world \cite{WebTopWorld}, with up to $190$ scholars having an $h$-index of $100$.
We wondered whether the scale invariant versions might offer improved \emph{resolution}, with fewer ties (for reasons similar to those mentioned above).  Of course, in noisy contexts fewer ties might not be desirable; if ties are being broken primarily by the noise, then fewer ties might reflect \emph{false precision}.

Our simulations here are based on some admittedly very simple  (one might say simplistic) assumptions.  We assume that for each time increment both the number of new publications, and the number of citations  of each previously published paper, follow Poisson distributions (mass functions) that remain constant over the author's career. In effect, we add Poisson noise to the simple deterministic model. The Poisson was selected because it is a probability mass function (i.e., produces integers), allowing zeros but not negative numbers, and has no theoretical maximum value. Among probability mass functions satisfying these criteria, it is perhaps the most commonly applied in practice. The underlying noise model, according to Wikipedia, is that Poisson ``\emph{expresses the probability of a given number of events occurring in a fixed interval of time or space if these events occur with a known constant mean rate and independently of the time since the last event} \dots"
Neither our use of the simple deterministic model as base, nor the use of Poisson (with its independence assumption) would seem to closely mirror reality.  Nevertheless, we feel this model to be appropriate as a simple starting point from which future simulations can build.

Table 1 (Figure \ref{Table1and2}) shows the results of our simulations for three parameter value combinations, each specified on a \emph{monthly} basis:
\begin{itemize}
\item $p = 0.125$; $c = 0.32$
\item $p = 0.2$; $c = 0.2$
\item $p = 0.32$; $c = 0.125$
\end{itemize}

\begin{figure}[!ht]%\label{Table1and2}
{\centering
\includegraphics[height=105mm,width=170mm] {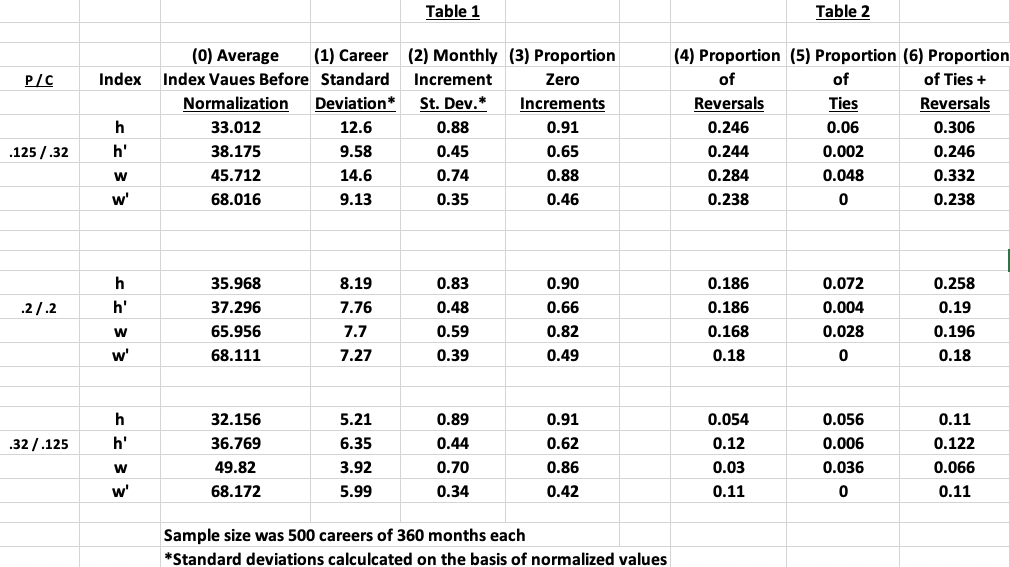}
\caption{Stochastic Simulations.}\label{Table1and2}}
\end{figure}

%\highlight{Tables 1 and 2 here; expand legend to mention normalization}

\noindent For each combination we randomly generated $500$ thirty-year careers.  That is, for each career and in each of $360$ consecutive months in that career, a non-negative integer was randomly generated from a Poisson distribution with mean $p$; this became the number of new publications for the author that month. Similarly, for each previous
publication, separately and independently a non-negative integer was randomly generated from a Poisson distribution with mean $c$; this became the number of citations for that paper that month.\footnote{Unlike the version we used originally for the standard deterministic model, in this section the papers published within a given time increment do not begin to accrue citations until the \emph{following} time increment.  (It seemed unrealistic, given the shorter monthly increments used here, for a paper to both appear and be cited by another paper within a single increment.)  As a check, we did run these simulations a second time, allowing citations to occur during the month of publications and, as expected, this made very little difference. }
We then updated the values of each of the four indices $h$, $h'$, $w$, and $w'$ for that month.

Note that a monthly publication mean of
  $0.125$ corresponds to an annual average number of $12\cdot 0.125 = 1.5$ publications, a monthly $0.2$ corresponds to an annual mean of $2.4$,  and a monthly $0.32$ yields an annual mean of $3.84$. We chose to update monthly, rather than annually, to obtain greater granularity in terms of the observed size of each step---each increment in an index value from one update to the next.  In particular, for these values of $p$ and $c$, most monthly index value increments are zero, as shown in column (3) of Table 1. Consequently, the increments we observe each month almost all represent single steps rather than the compound effect of several sequential increases in the index value.  We'll say more about this, shortly.

  Note, as well, that the product $p\cdot c$ has the same value $0.04$ for each of the three pairs of parameter values.  Consequently, if not for the presence of noise $h'$ would grow at exactly the same rate for each of these parameter pairs, as would $w'$  (based on the slopes as given in Proposition \ref{LGrProp}).  Thus, the three parameter pairs represent researchers of comparable strength, as measured by any scale-invariant index of the kind we are discussing.\footnote{The
  $h$-index, however, would grow about $10\% $ more quickly for $p = 0.2$,
 $c = 0.2$  than for the other two pairs, while $w$ would grow significantly more quickly for $p = 0.2$, $c = 0.2$ than for the other pairs.}

The matter of normalization for \emph{career values} (end-of-career values of an index for a researcher) required some care.  Two citation indices might as well be the same if one is a scalar multiple of the other, but when we measure the career standard deviation (standard deviation in end-of-career index values for the 500 runs) the same scalar will multiply those measurements.  Each of our four indices  produced a different average career value (over the 500 careers we ran), as seen in column (0) of Table 1, so if we measured standard deviation without compensating for these differences, the results would not be directly comparable.
%With each of our four indices  producing a different average career value (over the 500 careers we ran) the raw career values in column (0) of Table 1 are not directly comparable.
To create an ``apples-to-apples"
comparison, for each index we divided each of the 500 separate career values by the column (0) average of the 500 career values for that index, and then multiplied by $100$ to obtain normalized career values that average 100 for each index.\footnote{The alternative, of normalizing separately for each single run of each index, is problematic.} These normalized career values were used in measuring career standard deviation.

%separately calculated the average of the 500 career values, divided each individual career value by this average, and then multiplied by $100$ so that the career values would average 100 for each index.\footnote{The alternative -- normalizing separately for each single run of each index -- is problematic.} %\highlight{This might need a slight rephrasing, because it makes it sound as if column 0 in the table should only contain 100s.  We did not actually provide these normalized career values, but rather used the avg career values as normalization factors for columns 1, 2, . . ., and also 3?  I'm not sure about 3 . . . did you ever nail that down?}

The results in column (1) of Table 1 %(see Figure \ref{Table1and2}) 
show that for the citation dominant researcher ($c > p$), the scale invariant indices had lower standard deviation in their career values; they were more robust to the noise. The same holds for the balanced researcher  ($p = c$), but the reduction in standard deviation is much less dramatic. For the publication dominant researcher ($p > c$) the results reverse with the original indices being more robust. Taken overall, however, the scale invariant indices seem to offer greater robustness.  Note also that the standard deviations decreased from the first case (citation dominant) to the third case (publication dominant), suggesting that all four indices may be more sensitive to noise in citation rates than to noise in publication rates.

We calculated standard deviations of monthly increments as well (also normalized by average career index), which are shown in column (2) of Table 1. Note that the pattern for monthly increments is consistent across all three researchers; the scale-invariant indices produce less month-to-month variation, and this is loosely consistent with the information in column (3), showing that they have more frequent (hence, smaller after normalization) non-zero increments, compared to the original indices.

Table 2 (Figure \ref{Table1and2}) is meant to address the question of ties. Are some indices more resolute, and if so should we see greater resolution as advantageous?
For each of the same three combinations of $p$ and $c$ used earlier, we simulated 500 \emph{pairs} of careers; for each pair, the first researcher, who we will refer to as Researcher A, was assigned the nominal value of $p$ and of $c$,  while Researcher B was given $p$ and $c$ values $10\%$ higher than Researcher A. That is, if Researcher A had $p = 0.125, c = 0.32$, then Researcher B had $p = 0.1375$, $c = .352$. For each of the 500 pairs of careers, we compared the final career value for Researcher A to that of Researcher B for each index.  (We also ran a noise-free version, to show  the final values for A and B with no noise; of course, B receives the higher value in each case.)  A tie occurred when A and B received the same value (which we can think of as a form of ``wrong answer") with column (5) showing the proportion of ties.   Note that there were  a non-negligible proportion of ties, especially for the original indices.  It is perhaps unsurprising, in light of Table 1, that the scale invariant indices produced substantially fewer ties.

But were more ties being broken by the scale invariant indices simply because they were responding more sensitively to the noise? Let us say that a ``reversal" occurs when Researcher A's final career index is strictly higher than B's, i.e., when we get a ``very wrong answer." Column (4) of Table 2 shows that for the citation dominant and balanced researchers, the proportions of reversals were almost the same for the scale invariant indices as for the original versions.  For these cases, the additional tie-breaking ability of the scale invariant signals seems to reflect an enhanced ability to resolve the signal behind the noise, and not just a greater sensitivity to the noise itself.  Recall that real scholarly records in the data investigated by Fenner et al. \cite{FHLB} most frequently reflect the $c > p$ case, in which the scale-invariant indices achieve greater resolution without any increase in reversals.

For the publication dominant researchers, however, the pattern was similar to that for career standard deviation (in Table 1), with the original indices showing substantially fewer reversals. Column (6) represents the proportion of ties or reversals.  It is the sum of the previous two columns, and reflects the ``wrong or very wrong" answers---the proportion of times that the index failed to show that Researcher B was the better researcher. Again, it suggests that for citation dominant and balanced researchers, the scale-invariant versions out-perform their original counterparts.

\begin{figure}[!ht]
{\centering
\includegraphics[height=100mm,width=115mm] {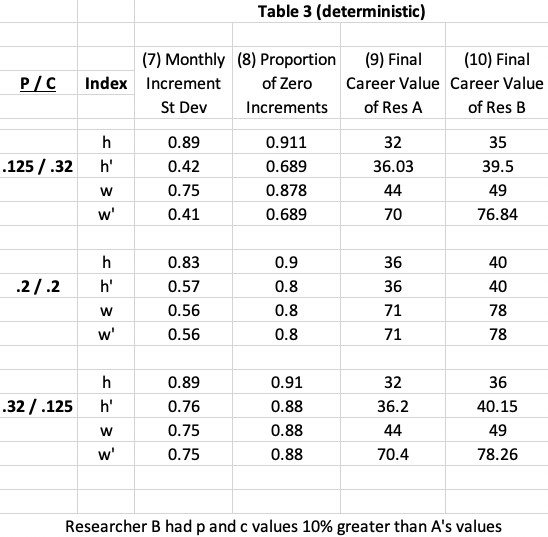}
\caption{Results with no Poisson noise.}\label{NoNoise}}
\end{figure}

For comparison purposes, Table
3 (Figure \ref{NoNoise}) shows the same simulations with no noise.  The calculation is deterministic, so there is no need for multiple runs, no column showing Career standard deviation (which would be zero), and no column for reversals (which would also be zero).
The monthly increments are not all equal, however, for two reasons.  First, even if the number of publications and citations increased by an identical integer amount for each time increment, these indices would occasionally jump up in value.  Second, we used non-integer values for $p$ and $c$, so with $p = 0.2$ (for example) each month adds $0.2$ publications to the total accrued to date. However, the indices only ``see" these increments once the accrued total reaches the next integer, at which point the new publication is recognized and can receive citations. Because the indices can experience jumps only when accrued publications or citations reach the next integer, the monthly increments vary for this reason as well.

A larger standard deviation in monthly increments implies fewer, more dramatic increments, i.e., steeper stair steps. Note that for all three combinations of $p$ and $c$, the variation was less for $h'$ than for $h,$ showing that the scale-invariant Hirsch index has more frequent and smaller increments---it is smoother than the Hirsch index. This is particularly true for $c > p,$ where the standard deviation of $h'$ is less than half that of $h$. For the case of Woeginger's index, however, $w'$ only has a lower monthly increment standard deviation than $w$ for $c > p$; in the other two cases, the variation is equal.

\section{Principal differences in behavior between the scale-invariant version $h'$ of Hirsch's index and the original index $h$}  \label{SEC:Goodness}

Principal differences in behavior between these two indices include:

\begin{enumerate}

\item \emph{$h'$ provides a fairer ranking within subdisciplines.}

\item \emph{Under $h'$ a greater proportion of a researcher's total citations contribute to the value of the index.}

\item \emph{$h'$ responds more smoothly to changes in the citation record, growing over time via increments that are more frequent and smaller.}

\item \emph{Under noisy conditions, $h'$ offers enhanced resolution and  decisiveness, with fewer ties and reversals.}

\end{enumerate}

\noindent While these points of difference are not entirely independent of one another, we do see them as four distinct advantages of $h'$ over $h$, worth articulating as separate points.  The comparisons are phrased in terms of $h'$ versus $h$, but similar observations apply to $w'$ versus $w$ (and to other, analogous pairs, we expect).
Of the four, a has already been discussed extensively in Section \ref{SEC:ScaleInvariantJustification}, and d in Section \ref{SEC:Simulations}.  Here, we discuss additional properties that provide support for b and c.

\bigskip

\noindent \emph{Tail balancedness and Centrality} Before the advent of indices such as Hirsch's, it was more common to use, as a metric,
the sum of the number of citations accrued by each publication, which of course is equal
to the total area under the step-function. By counting only that part of the area contained in a convex sub-region, Hirsch's index
lops off two tails so that only those citations lying inside Hirsch's square contribute to the value of the index.  Notice that the squares (and the isosceles triangles)  used by $h$ (and by $w$) are symmetric about
the line $y = x$. One consequence is that the tail truncation rule (implicit in either index) for horizontal tails is the same as that for vertical tails. It is worth noting, however,
that Hirsch's argument against counting the area in the horizontal tail is quite different from
that for the vertical tail. Hirsch does not provide theoretical or empirical evidence suggesting
these tails resemble one another in shape or area. We see no good argument, then, for treating the
two tails in exactly the same way.

Consider a citation record for which the horizontal tail is significantly larger in area than the vertical, with the step function remaining reasonably high over a significant range lying to the right of the Hirsch square.  Compared to the  square, the max-area $h'$ rectangle will lose height (as it retreats from the vertical tail) so that it can squeeze into the horizontal tail, gaining area because the gain in the rectangle's width more than compensates for the reduction in height. The two tails for the rectangle will now be closer to each other in area (\emph{tail balancedness}) and the max-area rectangle will be more centrally located (\emph{greater centrality}) within the bar graph region.  Of course, a similar argument applies if the vertical tail is the larger one. This is the mechanism by which $h'$ credits a larger proportion of an author's citations (point b above).  That it happens in practice was demonstrated by Fenner et al. \cite{FHLB}, who show $h' >> h$  often holds for real citation records (see discussion in Section \ref{SEC:INTRO}.1).

\bigskip

\noindent \emph{Finite-to-oneness} As a function from the set of all possible citation records, the scale-invariant index $h'$ is finite-to-one, while the original version is not.  To see why, suppose a certain scholar $R$ has a Hirsch index of $n$. Then we can easily infer that the lower bound of the number of citations of this scholar is $n^2$, but there is no upper bound. So in principle there are infinitely many citation records that yield the identical index value of $n$.  In practice, this allows for a possibly long list of actual scholars having identical index values of $n$, which includes scholars whose total number of citations varies widely.

The situation for $h'$ is quite different. If scholar $R$ has a scale-invariant Hirsch index of $x$, we can similarly infer that the lower bound of the number of citations of this scholar is $x^2$, but we can also find an upper bound, as given by
\begin{equation}  \label{E:upperbound}
\sum\limits_{i=1}^{x^2} \; \left\lfloor \dfrac{x^2}{i} \right\rfloor
\end{equation}
The sum in \eqref{E:upperbound} can be bounded by
\begin{equation} \label{E:upperbound2}
x^2+ \int_1^{x^2} \, \, \dfrac{x^2}{y} \, \, dy  = x^2 (1+2\ln x).
\end{equation}
For example, if $x=12$ the exact upper bound in \eqref{E:upperbound} is $746$, and the approximation in \eqref{E:upperbound2} gives $859$. Thus, the list of scholars having $h'=12$ contains researchers with at least $144$ citations but at most $746$ citations. 
%Of course, the distribution of total citation numbers for scholars in the interval is grouped near the center of the range $[144,746]$. \highlight{Josep -- Do we really know it is near the center, which is 445, as opposed, for example, to clustering near 250?  And are we referring to some probability distribution over all citation records having $h' = 12$?  Unless we have a strong reason to include it, and can answer these two questions, I suggest we drop this previous sentence.} \highlightb{I agree. Remove the statement.}
This distinction may help explain why $h'$ credits a greater proportion of total citations, and also why $h'$ is more responsive to changes in the citation record.

\medskip

\noindent \emph{More finely divided range} The set of images under the Hirsch index contains the positive integers, while for the scale-invariant version the range contains all square roots of positive integers. Between two consecutive integers $n$ and $n+1$ there are $2n$ non-integer numbers whose squares are integers, and the number of potential values of the scale-invariant Hirsch index that lie under a given integer $n$ is $n^2$.
%\begin{equation} \label{numberimages}
%N + 2 (1+2+\dots+(N-2)+(N-1)) = N^2
%\end{equation}

Suppose, for example, that the Hirsch indices of all scholars from a certain discipline are less than $100$.  As $h \leq h'$, some of these scholars may have an $h'$ value inferior to $100$, but others may have a greater index. For the first group of scholars there are $10,000$ possible images, and of course there are other images available from the second group. So the number of available $h'$ values is greater than the square of the number of available $h$ values.  This offers the possibility for $h'$ to respond more smoothly to changes in the scholarly record, and to have fewer ties (points c and d) and may explain why we observe that behavior in the simulations of the previous section.

\medskip

\noindent \emph{Flexible options for increments} Suppose that a researcher's $h$-index currently has value $n$, and a single new citation is added to her citation record. This can result in an increment to the $h$-index only if the new citation is made to (one of) the $n+1$-th most cited articles.  (There are additional conditions: each of the $n$ most cited papers must have at least $n+1$ citations and the newly cited paper must have exactly $n$ citations.  In this case, the Hirsch square simultaneously increases by 1 in both height and width, and the index increases from $n$ to $n+1$.)

With $h'$, however, a new citation to any of the $i=1, 2, \dots, l({\bf x})$ papers can potentially increase the $h'$-index (with additional conditions attached to each possibility) as the max-area inscribed rectangle increases slightly in height (while the width stays the same), or increases a lot in height (while the width decreases), or increases slightly in width (while the height stays the same), or increases a lot in width (while the height decreases), or increases in both height and width.  Recall, as well, that the size of the inscribed Hirsch square can be (and, arguably, often would be) constrained by only a single point of contact with the correspondence $c_{{\bf x}}$, so that any increase in the side-length of the square requires a change to the citation record that affects $c_{{\bf x}}$ at that particular location.  In contrast, the max-area inscribed rectangle for $h'$ is constrained by at least two points of contact with $c_{{\bf x}}$.  Arguably, there often would be exactly two such contact points, in which case a change to the citation record that affects $c_{{\bf x}}$ at either location would allow the rectangle to grow.  These differences may explain why we found, for both the stochastic and the deterministic simulations in Section \ref{SEC:Simulations}, that the scale invariant version of the index changes more frequently over the course of a career, resulting in a smoother response by the index (point c on the list).

\medskip

\bigskip

\section{Conclusions and directions for future research \label{SEC:MoreQuestions}}

The scope of our work here has been rather narrow.  Some citation indices are designed to score more highly those records that strike a balance between productivity and impact.  Many of these, including the ones proposed by Hirsch and by Woeginger, have some implicit fixed ratio of productivity to impact built into their underlying notion of balance.  We contend that such fixed scale indices introduce distortions in their rankings of researchers, while modified versions that impose scale invariance avoid those distortions.  Moreover, the scale invariant versions have some attractive axiomatic properties that argue for their mathematical naturality, tend to produce fewer ties, and to be more resistant to noise. In short, we argue that if one chooses to employ a \emph{balanced} index, then it should be a scale invariant version.

The mathematical social sciences are rife, however, with examples of desirable properties for social mechanisms that are inconsistent with one another, Arrow's Impossibility Theorem being a particularly well-known example. It has not been our purpose, here, to weigh in on whether the goal of rewarding \emph{balance} is worth the cost of giving up on other principles incompatible with that goal, such as independence or batching consistency (see Section \ref{BalanceCost}).  Nor have we addressed a host of other issues (such as how to factor in the effect of article length, journal quality, or co-authorship with others whose index values might be much higher or lower) that must be confronted when constructing a practical instrument that could be credibly applied in the real world.\footnote{
Some of these other issues might be best addressed via preliminary adjustments to the citation record, rather than by changes to the index itself. For example, one might adjust the number of citations credited to a paper according to the number of co-authors.  At the fifth World Congress of the Game Theory Society (in Maastricht, 2016), in the same session in which we presented a preliminary version of this paper, a presentation by Karol Flores-Szwagrzak and
Rafael Treibich proposed a fixed point mechanism for adjusting the number of citations credited to a paper according to the citation records of one's co-authors; see \cite{FT2020}). If the citation index itself were then applied to an adjusted record, we might view the question of what adjustments to make as being largely separable from the question of which index to apply after adjustment.}  

Our work here suggests a variety of directions for future research.  First, which practical issues would need to be addressed before any scale invariant index such as $h'$ could be implemented responsibly?  We have already mentioned that in its unadulterated form a scale-invariant index such as $h'$ does not actually reward balance; the greatest area rectangle might arise from a single highly cited paper, or from many papers, each with at least one citation.  John Nash himself presents an interesting case, which is almost this extreme.  According to Google Scholar, his third most cited paper has 10,675 citations, after which the numbers of citations drop off precipitously, so that the maximal area rectangle has dimensions $ 3 \times 10,675$, with $h' = 178.96$.  According to Google Scholar, Nash's Hirsch index is only $14$.\footnote{But the actual value may be lower, as his Google Scholar page listed a paper that is \emph{about} Nash, but not written by him, and lists a reprint of his paper on the bargaining problem, along with the original.} %Notice that citation records as extreme as Nash's, but with the inverse proportions (that is, having over $10,000$ publications, many of which have as few as $3$ citations) probably do not exist.  

We presume that upper and lower \emph{proportion bounds} would be placed on the ratio of height to width of the maximal area rectangle; only rectangles whose proportions fall within those bounds would then be considered in calculating the correspondingly restricted version of $h'$.  What should those limits be?  Notice that there probably exist no citation records as extreme as Nash's, that have the inverse proportions (that is, having over $10,000$ publications, many of which have as few as $3$ citations); this suggests that the lower proportion bound might be quite unequal to the inverse of the upper one.  Should these bounds be the same for all fields, or should they vary even among subfields of a given field?  For what (hopefully large) fraction of active scholars would the restrictions have no effect (because the best rectangle already falls within the bounds)?  

Those decisions should probably rest on a better understanding of the range in proportions of the max-area rectangle: within various subfields, from subfield to subfield, and from field to field.  Studies comparing these ranges may be of independent bibliometric interest.  Such a bibliometric study might suggest one or more additional parameters based on discipline, reflecting the typical range of dimensions for the max area rectangles of researchers from that discipline, the number of active scholars in the discipline, etc. %The values of such parameters might allow one to set those limits according to some more uniform rule, applied to the parameter values for  
Such parameters might suggest the possibility of some more uniform rule---one that would set the proportion bounds for a discipline according to the parameter values for that discipline.  They might also suggest how to construct correction factors that could be applied to compensate for disciplinary differences, allowing for fairer comparisons of scholars from different disciplines or sub-disciplines.     

In terms of our axiomatic results, the most obvious gap is that we have no  characterization for the scale invariant version $w'$ of Woeginger's index.  One goal in particular should be to find a substitute to the Max Bounded axiom that would convert the Main Characterization Theorem \ref{T:char}\emph{(ii)} for $h'$ into a corresponding result for $w'$ (or, with Linear Growth dropped, yield a characterization for the class of Woeginger powers $w_a$ for $a>0$).  On the other hand, if we simply drop Max Bounded from the list of Theorem \ref{T:char}\emph{(ii)} axioms, the class of indices so characterized includes all the scale invariant symmetric shape indices of Definition \ref{ShapeIndex} (including $h'$ and $w'$), as well as indices based on several different shapes, for which the final value is based on the shape that fits best (in the sense of yielding the highest value for the given ${\bf x}$).  Can we pin down a structural characterization for that class?

Given that the original Hirsch index $h$ satisfies all the axioms of Theorem \ref{T:char}\emph{(ii)} {\em except} for Strong Scale Invariance, it would also be of great interest to characterize $h$ by substituting some alternative to that axiom, perhaps one that represents a type of denial of scale invariance.  As one example of such a denial, note that $h$  (but not $h'$) satisfies the following requirement: for each choice of length $\ell$ there is a positive integer $k$ such that $g(k'{\bf x}) =  g(k{\bf x})$ holds whenever $\ell ({\bf x}) = \ell$ and $k' > k$ is an integer.

The inclusion of the Linear Growth axiom (LGr) in Theorem~\ref{T:char}\emph{(ii)} made it difficult to know whether or not the axioms used in this part are independent.  A related question is whether, in the presence of (LGr),  any of the other axioms used in this part could be relaxed. For example, can strong scale invariance (SSInv) be replaced by scale invariance (SInv)? Any progress on refining Theorem \ref{T:char}\emph{(ii)} would be valuable.

Finally, in terms of our simulation with noise, the most compelling open question is whether our results would broadly hold up under different models. One might explore alternatives to Poisson noise.  Alternately (or additionally) it is tempting to consider alternatives to the simple deterministic model as the pre-noise base.  Our model here assumed that for a given researcher $R$, all of $R$'s published papers earn citations at the same (pre-noise) rate, and that this rate remains constant over the years. In reality, some of $R$'s papers may have fundamentally greater impact than others, and the natural lifespan of a paper may see its yearly citations rise for some time, and then fall.  We remain curious, as well, about the underlying explanations for some of our results; why, for example, do the $h$ vs $h'$ and $w$ vs $w'$ results vary according to whether $c > p$ or $p > c$?

\section*{Acknowledgements}

The first author's research was partially supported by funds from the Ministry of Science and Innovation grant PID2019-104987GB-I00.  We thank Denis Bouyssou for his informative comments, which improved the manuscript.  %\highlight{Josep -- some grant of yours helped pay for my research visits to Manresa, didn't they?  So should this acknowledgement mention that as well?} \highlightb{It is fine as it is now.}

\section{Appendix: MVIIA, WARP, and Replacement}

The MVIIA axiom (Multi-Valued Independence of Irrelevant Alternatives Axiom, from Section \ref{SomeAxioms}) is related to two other principles from the mathematical social sciences.  First, we'll show that it implies the  \emph{Weak Axiom of Revealed Preference}, aka WARP---a condition on the choices made by an agent (from various sets of alternatives), which is  satisfied if those choices are ``rational," meaning they are guided by an underlying weak preference order over the alternatives.  Then we will show that MVIIA implies a related principle MVIIA$^\star$, which resembles the  \emph{reinforcement} principle of voting theory, used by Smith \cite{Smith} and Young \cite{Young}, \cite{Young2} to characterize scoring rules. 

Our original formulation of MVIIA was designed for a narrow context, in which the choice function selected points specifically from bar graph regions of  $\mathbb{R}_{++} ^2$. Nothing prevents us, however, from reformulating the same principle more abstractly, and this facilitates comparisons to principles from different contexts.

\begin{definition}\label{AbstractMVIIA}

Let $\Sigma$ be a collection of nonempty subsets of some set $X$, and $c$ be a choice function on $\Sigma$, meaning that $c$ selects a nonempty subset $c(F) \subseteq F$ for each set $F \in \Sigma$. Then $c$ satisfies the \emph{abstract version of MVIIA} if $c(G) = c(F) \cap G$ holds whenever $F,G \in \Sigma$ satisfy both $G \subseteq F$  and $G \cap c(F) \ne \emptyset$.

\end{definition}

\begin{definition}\label{SigmaClosure}

A collection $\Sigma$ of nonempty subsets of some set $X$ is \emph{closed under unions} if $F \cup G \in \Sigma$ holds whenever $F,G \in \Sigma$; $\Sigma$ is \emph{closed under intersections} if $F \cap G \in \Sigma$ holds whenever $F,G \in \Sigma$.

\end{definition}

We will need closure of $\Sigma$ under unions for the first proposition below, and closure under intersections for the second, so it is worth noting that the collection of all bar graph regions of $\mathbb{R}_{++} ^2$ has both closure properties.  The reason is that for any two citation records ${\bf z}, {\bf w}$, we have $\mathcal{B}({\bf z}) \cup \mathcal{B}({\bf w}) = \mathcal{B}(max({\bf z}, {\bf w}))$, which is also a bar graph region.  Here $max({\bf z}, {\bf w})$ refers to the componentwise maximum of the two citation records (which is itself a citation record).  Similarly $\mathcal{B}({\bf z}) \cap \mathcal{B}({\bf w}) = \mathcal{B}(min({\bf z}, {\bf w}))$, which is also a bar graph region.

The Weak Axiom of Revealed Preferences similarly refers to a collection $\Sigma$ of nonempty subsets of a set $X$, along with choice function $c$ on $\Sigma$. % that chooses a nonempty subset $c(B)$  of $B$, for each $B \in \Sigma$. 
But the actual WARP statement (from \cite{ChamEch}, page 19) is posed in terms of derived relations $\succeq^c$ and $\nsucc^c$, as follows:

\begin{equation} \label{E:WARP}
x \succeq^c y \Rightarrow y \nsucc^c x. 
\end{equation}

\noindent Here $x \succeq^c y$ holds if for some $B \in \Sigma$ we have $x,y \in B$ and $x \in c(B)$; this says that at least once, $x$ gets chosen when $y$ was available.  We write $x \succ^c y$ if for some $B \in \Sigma$ we have $x,y \in B$ and $x \in c(B)$ and $y\notin c(B)$;  this says that at least once, $x$ {\em gets chosen over }$y$ (meaning $x$ gets chosen and $y$ is {\em not} chosen when $y$ was available).  Equation \eqref{E:WARP} thus asserts that if it ever happens that $x$ is chosen when $y$ is available, then $y$ is never chosen over $x$.

\begin{proposition} (MVIIA implies WARP) \label{MVIIAvsWARP} 
Let $\Sigma$ be a collection of nonempty subsets of some set $X$, closed under unions, and $c$ be any choice function on $\Sigma$ (in the sense of Definition \ref{AbstractMVIIA}). If $c$ satisfies the abstract version of  MVIIA, then $c$ satisfies WARP.

\end{proposition}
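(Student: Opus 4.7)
The plan is to prove the contrapositive of WARP: if both $x \succeq^c y$ and $y \succ^c x$ were to hold simultaneously, I would derive a contradiction from MVIIA. So I assume witnesses $B, B' \in \Sigma$ with $x,y \in B$ and $x \in c(B)$, as well as $x,y \in B'$ with $y \in c(B')$ and $x \notin c(B')$. The key construction is to form $F = B \cup B'$, which lies in $\Sigma$ by the union-closure hypothesis. Both $B \subseteq F$ and $B' \subseteq F$, so MVIIA becomes available as a tool for relating $c(F)$ to $c(B)$ and to $c(B')$, conditional on the relevant intersections being nonempty.

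Next, I would observe that $c(F)$ is a nonempty subset of $F = B \cup B'$, so at least one of $c(F) \cap B$ and $c(F) \cap B'$ is nonempty; this gives a natural case split. In the first case, MVIIA yields $c(B) = c(F) \cap B$, whence $x \in c(B)$ forces $x \in c(F)$; but then $x \in B'$ gives $c(F) \cap B' \ne \emptyset$, so MVIIA applies again to yield $c(B') = c(F) \cap B'$, placing $x$ in $c(B')$ and contradicting the choice of $B'$. In the remaining case (where $c(F) \cap B = \emptyset$ but $c(F) \cap B' \ne \emptyset$), MVIIA gives $c(B') = c(F) \cap B'$, so $y \in c(B')$ implies $y \in c(F)$; since $y \in B$, this contradicts $c(F) \cap B = \emptyset$.

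Either way the assumed joint occurrence of $x \succeq^c y$ and $y \succ^c x$ is untenable, so WARP holds. The only real subtlety worth flagging is the conditional nature of MVIIA, which applies only when the intersection of $c(F)$ with the smaller set is nonempty; this is precisely what forces the two-case analysis above, and why union-closure (rather than, say, intersection-closure) is the structural hypothesis that makes the argument work. I do not anticipate any genuine obstacle beyond bookkeeping the cases correctly.
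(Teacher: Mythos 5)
Your proof is correct and follows essentially the same route as the paper's: form the union of the two witness sets, use union-closure, split on which of the two intersections with the choice set of the union is nonempty, and apply MVIIA twice. The only difference is cosmetic---you argue by contradiction from the negation of WARP, while the paper directly shows $x \in c(G)$; the underlying case analysis and use of MVIIA are the same.
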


\begin{proof}
Assume the multi-valued choice function $c$ satisfies MVIIA. To show WARP, assume $x \succeq^c y$. Choose an $F \in \Sigma$ such that $x,y \in F$ with $x \in c(F)$. To show  $y \nsucc^c x$, let $G$ be any set in $\Sigma$ such that $y \in c(G)$ and $x \in G$. We'll show $x \in c(G)$.
Let $H = F \cup G$. Then $H \in \Sigma$, by our closure assumption. As $c(H) \subseteq F \cup G$,  it must be that either $c(H) \cap F \ne \emptyset$ or $c(H) \cap G \ne \emptyset$.

\underline{Case 1}: Assume $c(H) \cap F \ne \emptyset$.  Then by MVIIA, 
$c(F) = F \cap c(H)$.  As $x \in c(F)$, $x \in c(H)$.  Also, $x \in G$, so $G \subseteq H$ with $G \cap c(H) \ne \emptyset$. By MVIIA again, $c(G) = G \cap c(H)$.  But $x \in G$ and $x \in c(H)$, so $x \in G \cap c(H)$, whence $x \in c(G)$, as desired.

\underline{Case 2}: Assume $c(H) \cap G \ne \emptyset$.  Then by MVIIA, 
$c(G) = G \cap c(H)$. As $y \in c(G)$, $y \in c(H)$.  Also, $y \in F$.  So $F \subseteq H$ with $F \cap c(H) \ne \emptyset$. By MVIIA again, $c(F) = F \cap c(H)$. As $x \in c(F)$, $x \in c(H)$.  So $x \in G$ and $x \in c(H)$. As $c(G) = G \cap c(H)$, $x \in c(G)$, as desired. \end{proof}

 The statement of  \emph{MVIIA} also seems reminiscent of the \emph{reinforcement axiom}, used by Smith \cite{Smith} and Young \cite{Young} in characterizing scoring rules as a subclass of all those voting rules that are both \emph{variable electorate} (meaning the same rule can be applied to different electorates) and \emph{irresolute} (meaning that ties can lead to more than one winner).  Consider a scenario in which the same election (meaning the same set $A$ of candidates and same voting rule $R$) is held in two districts $P$ and $Q$ that have no voters in common. Reinforcement asserts that if there is any candidate who is both a winner in district $P$ and a winner in district $Q$ then when we apply the same rule $R$ to the combined district $P+Q$, the winners should be exactly those candidates who were winners in both districts.  More formally, if $w(P) \cap w(Q) \ne \emptyset$, then $w(P+Q) = w(P) \cap w(Q)$.
 
 As far as we know, the difference in context rules out any direct logical connection between MVIIA and reinforcement.  However, there does exist a connection between MVIIA and the following principle \emph{MVIIA}$^\star$, which seems quite parallel in spirit to reinforcement:  
 
 \begin{definition}\label{MVIIAstar}
Let $\Sigma$ be a collection of nonempty subsets of some set $X$, closed under intersection, and let $c$ be a choice function on $\Sigma$, in the sense of Definition \ref{AbstractMVIIA}. Then $c$ satisfies MVIIA$^\star$ if for all $F,G \in \Sigma$ satisfying $c(F) \cap c(G) \ne \emptyset$, we have $ c(F\cap G) = c(F) \cap c(G)$. 
\end{definition}
 
\noindent That is, $c$ selects from
 $F\cap G$ those points that were selected in common from both $F$ and
 $G$, providing at least one such point was selected from both.  
 
 \begin{proposition}  \label{MVIIAvsMVIIAstar} (MVIIA implies MVIIA$^\star$)
 Let $\Sigma$ be a collection of nonempty subsets of some set $X$, closed under intersection, and let $c$ be a choice function on $\Sigma$, in the sense of Definition \ref{AbstractMVIIA}. If $c$ satisfies the abstract version of MVIIA then $c$ satisfies MVIIA$^\star$.
\end{proposition}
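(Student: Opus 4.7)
The plan is to reduce MVIIA$^\star$ to two symmetric applications of MVIIA, using $H := F \cap G$ as the intermediate ``smaller'' set. Given $F, G \in \Sigma$ with $c(F) \cap c(G) \ne \emptyset$, I first observe that $H \in \Sigma$ by the closure-under-intersection assumption, and that $H$ is nonempty since $c(F) \cap c(G) \subseteq F \cap G = H$.

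Next I would apply MVIIA to the pair $H \subseteq F$. The required hypothesis $H \cap c(F) \ne \emptyset$ holds because any element of $c(F) \cap c(G)$ lies in $H$ (being in $F \cap G$) and in $c(F)$, so $c(F) \cap c(G) \subseteq H \cap c(F)$. MVIIA then yields $c(H) = c(F) \cap H = c(F) \cap G$, where the last equality uses $c(F) \subseteq F$. Symmetrically, applying MVIIA to $H \subseteq G$ gives $c(H) = c(G) \cap H = c(G) \cap F$.

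Finally I would show that the common value $c(F) \cap G = c(G) \cap F$ is exactly $c(F) \cap c(G)$. Any $x$ in this common set belongs to $c(F)$ by the first expression and to $c(G)$ by the second, so $c(F) \cap G \subseteq c(F) \cap c(G)$; the reverse inclusion $c(F) \cap c(G) \subseteq c(F) \cap G$ is immediate from $c(G) \subseteq G$. Combining, $c(F \cap G) = c(H) = c(F) \cap c(G)$, which is MVIIA$^\star$.

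I do not anticipate any real obstacle: the argument is essentially bookkeeping. The two points to verify carefully are (i) that the nonemptiness hypothesis of MVIIA is met in each of the two applications, which follows because $c(F) \cap c(G)$ sits inside both $H \cap c(F)$ and $H \cap c(G)$, and (ii) that $H \in \Sigma$, which is exactly the closure hypothesis built into the statement of MVIIA$^\star$ in Definition \ref{MVIIAstar}.
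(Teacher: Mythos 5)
Your proposal is correct and follows essentially the same route as the paper's proof: set $H = F\cap G$, verify the nonemptiness hypothesis via $c(F)\cap c(G)\subseteq H\cap c(F)$, apply MVIIA to $H\subseteq F$ and to $H\subseteq G$, and equate the two resulting expressions for $c(H)$ to get $c(F)\cap c(G)$. The only cosmetic difference is that the paper concludes by intersecting the two expressions directly, while you argue the final identification element-wise.
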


\begin{proof}
Given $F$, $G \in \Sigma$
with $c(F) \cap c(G) \neq \emptyset$, let $H=F\cap G$.
Then $c(F) \cap H = c( F)\cap (F \cap G) \supseteq c(F) \cap c(G) \cap (F \cap G)$ $= c (F) \cap c (G) \neq \emptyset$.  So, by \emph{MVIIA}, $c(H) = H \cap c(F)$, whence $c(F \cap G) = F \cap G \cap c(F)$.  Similarly, $c(F \cap G) = F \cap G \cap c(G)$.  So $c(F \cap G) = [F \cap G \cap c(F)] \cap [F \cap G \cap c(G)] = c(F) \cap c(G). $ \end{proof}

We do not know whether a reasonable converse exists to Proposition \ref{MVIIAvsWARP}, or to Proposition \ref{MVIIAvsMVIIAstar}. It seems likely that the closure conditions placed on $\Sigma$ would play a role in any consideration of such converses.

\end{document}